\let\originalleft\left
\let\originalright\right
\renewcommand{\left}{\mathopen{}\mathclose\bgroup\originalleft}
\renewcommand{\right}{\aftergroup\egroup\originalright}
\newcommand\suchthat{\@ifstar
  {\mathrel{}\middle\vert\mathrel{}}
  {\mid}}
\theoremstyle{plain}
\newtheorem{theorem}{Theorem}[section]
\newtheorem{lemma}[theorem]{Lemma}
\newtheorem{proposition}[theorem]{Proposition}
\newtheorem{corollary}[theorem]{Corollary}
\theoremstyle{definition}
\newtheorem{example}[theorem]{Example}
\newtheorem{remark}[theorem]{Remark}
\newcommand{\niceonehalf}{{\nicefrac{1}{2}}}
\newcommand{\supp}{{\operatorname{supp}}}
\newcommand{\inv}{{-1}}
\newcommand{\equivalent}{ \Longleftrightarrow }
\DeclareMathOperator{\dist}{\Delta}
\newcommand{\Lip}{\mathsf{L}}
\newcommand{\LipInv}{\mathsf{iL}}
\newcommand{\biLip}{\mathsf{bL}}
\newcommand{\Star}{\operatorname{St}}
\newcommand{\Ball}{\calB}
\newcommand{\mytent}{\sharp}
\newcommand{\bbL}{{\mathbb L}}
\newcommand{\bbN}{{\mathbb N}}
\newcommand{\bbQ}{{\mathbb Q}}
\newcommand{\bbR}{{\mathbb R}}
\newcommand{\calA}{{\mathcal A}}
\newcommand{\calB}{{\mathcal B}}
\newcommand{\calC}{{\mathcal C}}
\newcommand{\calO}{{\mathcal O}}
\newcommand{\calS}{{\mathcal S}}
\newcommand{\calU}{{\mathcal U}}
\newcommand{\calV}{{\mathcal V}}
\newcommand{\calW}{{\mathcal W}}
\newcommand{\calY}{{\mathcal Y}}
\begin{document}

\title[Constructing collars]{Constructing collars in paracompact Hausdorff spaces and Lipschitz estimates}

\author{Martin W.\ Licht}
\address{\'Ecole~Polytechnique~F\'ed\'erale~de~Lausanne (EPFL), 1015 Lausanne, Switzerland}
\email{martin.licht@epfl.ch}

\subjclass[2000]{51F30, 54D20, 57N40, 57N45}

\date{......}

\keywords{collar, Lipschitz collar, Lipschitz partition of unity, paracompact, strongly paracompact}

\begin{abstract}
    We give a constructive proof for the following new collar theorem:
    every locally collared closed set that is paracompact in a Hausdorff space is collared. 
    This includes the important special case of locally collared closed sets in paracompact Hausdorff spaces. 
    Importantly, we use Stone's result that every open cover of a paracompact space 
    has an open locally finite refinement which is the countable union of discrete families.
    Furthermore, in the LIP category, our construction yields collars that are locally bi-Lipschitz embeddings. 
    If the initial data satisfy uniform estimates, 
    then this collar is even bi-Lipschitz onto its image and we explicitly bound the constants. 
    We also provide partitions of unity whose Lipschitz constants are bounded by the Lebesgue constant and the order of the cover.
\end{abstract}

\maketitle

\section{Introduction}

Collars are an important concept of geometric topology used widely throughout mathematics. 
For example, the boundaries of manifolds are collared, and embedded two-sided surfaces of codimension one admit two-sided collars. 
Constructing collars is instrumental in proving the Schoenflies theorem in differential topology, and 
collars also appear in the analysis of partial differential equations for extending functions beyond rough domains. 
Past research efforts have brought forth the following \emph{collar theorem}, applicable under various additional assumptions: 
\emph{a subspace is collared if it is locally collared}.

This manuscript discusses a collar theorem that is both general and constructive. 
On the one hand, our collar construction is general enough to collar any locally collared subset \emph{paracompact in} an ambient Hausdorff space. 
On the other hand, it is sufficiently explicit to enable estimates of Lipschitz constants 
when the construction is carried out in Lipschitz topology. 

For the purpose of this manuscript,
a collar along a closed subset $B$ of a topological space $X$ is an embedding $c : B \times [0,1] \to X$ such that $c(x,0) = x$ for all $x \in B$
and such that $c( B \times [0,1) ) \subseteq X$ is an open neighborhood of $B$. 
We then say that $B$ is \emph{collared} in $X$. 
Conceptually, a collar provides a partial coordinate system along $B$ in one single variable.
The fundamental insight is that in many classes of topological spaces,
the existence of a collar along some subspace already follows if such collars are merely known to exist locally along that subspace. 
\\

We contextualize our methods and review the literature on the collar theorem.
We prove that a locally collared closed subset $B$ of a Hausdorff space $X$ is collared 
if it satisfies the condition of being \emph{paracompact in $X$},
meaning that every cover of $B$ by sets open in $X$ has an open locally finite refinement that covers $B$. 
We emphasize that paracompactness of $B$ is a weaker condition and generally not sufficient for the collar theorem~\cite[Example~3.5]{baillif2022collared}. 
That being said, it is not necessary that the ambient space $X$ be paracompact.

Our collar construction relies on a lesser known result by Stone: 
every open cover of a paracompact Hausdorff space has an open locally finite refinement that is a countable union of discrete open families~\cite{stone1948paracompactness}. 
In the light of Stone's result, we first prove that discrete families of collared sets have a collared union,
and then complete the discussion with an induction argument. 

There are essentially two known approaches to proving the collar theorem in non-smooth categories. 
Brown~\cite{brown1962locally} proves a collar theorem for metric spaces, 
relying on the following feature of paracompact spaces, discovered by Michael~\cite{michael1954local}:
if a collection of open sets is closed under taking open subsets, disjoint unions, and finite unions,
then the union of the collection is contained in the collection itself.
Brown proves the collar theorem by showing that the property of \emph{being collared} satisfies those three requirements.
The strategy seems inherently non-constructive.
However, our approach can also be reinterpreted as a variation of Brown's idea:
we show that the class of collared sets is closed under 
taking open subsets, discrete unions, and countable locally finite unions. 

Connelly~\cite{connelly1971new} proves the collar theorem for compact subspaces of Hausdorff spaces
with a different, inherently constructive and geometric method. 
In that approach, one first extends $X$ by glueing a product set $B \times [0,1)$ along $B \subseteq X$.
Next, one stepwise constructs the desired collar by using the local collars 
to pull the original topological space into the extended space.
Connelly only targets compact subspaces, and so the procedure finishes after a finite number of steps. 
En passent, Connelly also deviates from Brown's premises in defining collars to be closed embeddings; this difference has effects when the ambient topology is non-normal~\cite{baillif2022collared}.

Connelly's original article alludes to a possible generalization to the case where $B$ is strongly paracompact,
but no further details are given. 
Let us recall that a topological space is called \emph{strongly paracompact} 
if every open cover admits an open refinement 
such that every member of the refinement intersects only finitely many other members of the refinement.
All strongly paracompact spaces are paracompact but not vice versa. 
How to extend Connelly's result along that line of thought is elaborated in Gauld's textbook on non-metrizable manifolds~\cite[Theorem~3.10]{gauld2014non} and in a contribution by Baillif~\cite[Theorem~2.5]{baillif2022collared}. 
Our approach in this manuscript can be seen as a follow-up to these works.
The latter reference emphasizes that strong paracompactness of the locally collared set is not sufficient, while strong paracompactness of the ambient space is not necessary; indeed, the natural assumption is that the locally collared subset is \emph{strongly paracompact in} its ambient space.
We build upon that line of thought in the present exposition. 

Finite-dimensional manifolds are metrizable if and only if they are paracompact if and only if they are strongly paracompact~\cite[Theorem~2.1]{gauld2014non}. 
However, some metric spaces are not strongly paracompact, let alone compact, and some strongly paracompact spaces are not metrizable. That means that the contributions by Brown and by Connelly (and the latter's extensions by Gauld and Baillif) 
ultimately address different classes of spaces, neither class being a subset of the other. 
This is why we study a collar construction whose scope includes the class of paracompact Hausdorff spaces, even non-metrizable ones.
\\

After studying the collar theorem in the topological category under very general assumptions,
we turn our attention to a special case that offers much more structure: 
the second part of this work addresses collars in Lipschitz topology. 

Does the collar theorem still hold when all spaces are metric and all mappings are locally Lipschitz?
Luukkainen and V\"ais\"al\"a~\cite{luukkainen1977elements} have proven the collar theorem in the LIP category: any set locally collared by locally bi-Lipschitz local collars has a global collar that is locally bi-Lipschitz.
Their proof is similar in spirit to Brown's approach. 
We provide an alternative proof, when the base set is closed, by re-analyzing our collar construction.

In addition, we highlight sufficient conditions for the global collar being bi-Lipschitz. 
On the one hand, this new result is qualitative: 
the global collar is bi-Lipschitz if we start with a collection of local collars that satisfy several uniform estimates.
Those uniform estimates include, e.g., bounds on the order and Lebesgue number of the base cover. 
On the other hand, our quantitative results are explicit estimates of the Lipschitz constants in terms of a few problem parameters. 
We are aware of work by Ghiloni~\cite{ghiloni2010complexity},
who addresses quantitative aspects of Lipschitz collars specifically of compact spaces;
since we include non-compact collared sets in our discussion, we require structurally different techniques. 

As an auxiliary result of independent standing, 
we construct Lipschitz partitions of unity whose Lipschitz constants are bounded in terms of the cover's Lebesgue number and order. The bounds slightly improve estimates in prior publications~\cite{buyalo2008hyperbolic}.
\\

Two-sided collars are also known as \emph{bicollars}. 
One easily glues two one-sided collars in the topological category to build a bicollar;
more care seems imperative in the Lipschitz setting, as we shall discuss. 

It is an intriguing question whether our collar construction works in other settings with additional structure,
such as in the uniform category and in the category of PL manifolds. 
A qualitative transition seems to take place once we consider $C^{1}$ submanifolds or even smoother spaces:
here, the collar theorem is proven easily via local flows. 
Finally, there are numerous aspects of metric geometry that emerge in the discussion of Lipschitz collars,
such as the control of Lebesgue numbers of open covers. 
We leave these questions to future research endeavors.
\\

The remainder of the manuscript is structured as follows. 
In Section~\ref{section:covers}, 
we review definitions and prove fundamental results on subspaces that are paracompact within their ambient space.  
In Section~\ref{section:collardefinitions}, 
we gather the relevant definitions of collars and prove additional auxiliary lemmas.
In Section~\ref{section:topological}, 
we prove the collar theorem in the paracompact case. 
In Section~\ref{section:stronglyparacompact}, 
we discuss the strongly paracompact case in more detail. 
In Section~\ref{section:bicollar}, 
we briefly review the construction of bicollars. 
In Section~\ref{section:lipschitz}, finally, 
we prove the collar theorem in the Lipschitz setting.

\section{Definitions and auxiliary results}\label{section:covers}

This section collects relevant notions of topological spaces and auxiliary results. 
Engelking's monograph~\cite{engelking1989general} serves as our standard reference for general topology. 

We begin by recalling the definitions of some of the terms which we use.
Let $X$ be a topological space.
A \emph{base} for the topology of $X$ is a collection $\calB$ of open sets in $X$ such that every open set in $X$ is a union of sets from $\calB$.
Two sets $A, B \subseteq X$ are \emph{separated} if there exist open sets $U$ and $V$ with $A \subseteq U$ and $B \subseteq V$ but $U \cap V = \emptyset$.
We call $X$ \emph{Hausdorff} if all distinct single-point sets in $X$ are separated, 
\emph{regular} if every disjoint singleton and closed set are separated, 
and \emph{normal} if any two disjoint closed sets are separated.
One immediately sees that singletons are closed in Hausdorff spaces, and that normal Hausdorff spaces are regular.

\begin{lemma}\label{lemma:closurecondition} 
    Let $X$ be a topological space, $U \subseteq X$, and $y \in X$.
    Then $y \in \overline{U}$ if and only if every open neighborhood of $y$ intersects with $U$. 
\end{lemma}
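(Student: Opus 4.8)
The plan is to prove Lemma~\ref{lemma:closurecondition}, the standard characterization of closure points via neighborhoods, directly from the definition of the closure as the smallest closed set containing $U$ (equivalently, the complement of the largest open set disjoint from $U$).

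First I would establish the contrapositive of the ``only if'' direction in the form: if some open neighborhood $V$ of $y$ satisfies $V \cap U = \emptyset$, then $y \notin \overline{U}$. Indeed, $V \cap U = \emptyset$ means $U \subseteq X \setminus V$; since $X \setminus V$ is closed and $\overline{U}$ is the intersection of all closed sets containing $U$, we get $\overline{U} \subseteq X \setminus V$, hence $y \in V$ implies $y \notin \overline{U}$. Conversely, for the ``if'' direction I would again argue contrapositively: suppose $y \notin \overline{U}$. Then $V := X \setminus \overline{U}$ is an open set containing $y$, and since $U \subseteq \overline{U}$ we have $V \cap U = \emptyset$, so $V$ is an open neighborhood of $y$ not meeting $U$. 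Combining the two contrapositives gives the stated equivalence.

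There is essentially no obstacle here: this is a foundational fact whose only subtlety is keeping the quantifiers straight (``every neighborhood meets $U$'' versus ``some neighborhood misses $U$''), so the cleanest route is to phrase both implications as contrapositives so that each reduces to a one-line set-complement manipulation using the definition of $\overline{U}$ as an intersection of closed sets. If the paper instead wishes to avoid invoking that definition and work only from a chosen topology's open sets, one can equivalently define $\overline{U}$ as the set of points all of whose neighborhoods meet $U$ and then the lemma is a tautology; but assuming the standard closed-set definition, the two-paragraph argument above is complete. The only care needed in the write-up is to state clearly which definition of closure is being used, since the lemma is, depending on the textbook, either a definition or a theorem.
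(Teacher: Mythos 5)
Your proof is correct and follows essentially the same route as the paper's, which also argues both directions contrapositively via the complement of a closed set; in fact your write-up justifies the first implication (that a neighborhood missing $U$ forces $y \notin \overline{U}$) more explicitly than the paper does.
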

\begin{proof} 
    If $y$ has an open neighborhood disjoint from $U$, then $y \notin \overline{U}$.
    Conversely, if $y \notin \overline{U}$, then $X \setminus \overline{U}$ is an open neighborhood of $y$ 
    disjoint from $\overline{U}$.
\end{proof}

\begin{lemma}\label{lemma:joiningrelativeopensets}
    Let $A, B$ be closed subsets of a topological space $X$.
    If $U_A \subseteq A$ and $U_B \subseteq B$ are open in $A$ and $B$, respectively, and
    \[
        U_A \cap A \cap B = U_B \cap A \cap B,
    \]
    then $U_A \cup U_B$ is open in $A \cup B$. 
\end{lemma}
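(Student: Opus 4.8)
The plan is to reduce the statement to producing a single set $O$, open in $X$, with $O \cap (A \cup B) = U_A \cup U_B$; by the definition of the subspace topology on $A \cup B$ this is exactly what it means for $U_A \cup U_B$ to be open in $A \cup B$. To get started, I would use that $U_A$ is open in $A$ and $U_B$ is open in $B$ to fix open sets $O_A, O_B \subseteq X$ with $O_A \cap A = U_A$ and $O_B \cap B = U_B$. I would also record the meaning of the hypothesis: since $U_A \subseteq A$ and $U_B \subseteq B$, the equality $U_A \cap A \cap B = U_B \cap A \cap B$ just says $U_A \cap B = U_B \cap A$, i.e.\ the two relatively open sets agree on the overlap $A \cap B$. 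Concretely, any point of $U_A$ that happens to lie in $B$ automatically lies in $U_B$, and symmetrically; this is the compatibility that prevents the two pieces from clashing when glued.

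Next I would set
\[
    O := (O_A \setminus B) \cup (O_B \setminus A) \cup (O_A \cap O_B).
\]
Because $A$ and $B$ are closed in $X$, each of the three sets $O_A \setminus B$, $O_B \setminus A$, and $O_A \cap O_B$ is open in $X$, hence $O$ is open in $X$. The inclusion $O \cap (A \cup B) \subseteq U_A \cup U_B$ is then a short case check: intersect each of the three pieces with $A$ and with $B$ separately and use $O_A \cap A = U_A$ and $O_B \cap B = U_B$. For the reverse inclusion, take $p \in U_A$; if $p \notin B$ then $p \in O_A \setminus B \subseteq O$, while if $p \in B$ then the hypothesis gives $p \in U_A \cap A \cap B = U_B \cap A \cap B \subseteq U_B \subseteq O_B$, and also $p \in U_A \subseteq O_A$, so $p \in O_A \cap O_B \subseteq O$; the argument for $p \in U_B$ is symmetric. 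Since every such $p$ also lies in $A \cup B$, this yields $O \cap (A \cup B) = U_A \cup U_B$, completing the proof.

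I do not anticipate a real obstacle. The only points requiring a little care are the use of the closedness of $A$ and $B$ to ensure the trimmed pieces $O_A \setminus B$ and $O_B \setminus A$ are genuinely open, and, in the case analysis for the two inclusions, keeping track of which of $U_A$, $U_B$ controls the intersection with $A$ versus $B$ — it is precisely here that the compatibility hypothesis is invoked.
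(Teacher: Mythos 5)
Your proof is correct. The underlying case analysis is the same as the paper's: both proofs fix ambient open sets $O_A, O_B$ with $O_A \cap A = U_A$, $O_B \cap B = U_B$, and both exploit the closedness of $A$ and $B$ via the three sets $O_A \setminus B$, $O_B \setminus A$, and $O_A \cap O_B$, using the compatibility hypothesis exactly on the overlap $A \cap B$. The difference is purely one of packaging: the paper argues pointwise, producing for each $x \in U_A \cup U_B$ a neighborhood of $x$ open in $A \cup B$ and contained in $U_A \cup U_B$ (choosing one of the three pieces depending on whether $x \notin B$, $x \notin A$, or $x \in A \cap B$), whereas you assemble all three pieces into the single ambient open set $O = (O_A \setminus B) \cup (O_B \setminus A) \cup (O_A \cap O_B)$ and verify $O \cap (A \cup B) = U_A \cup U_B$ directly. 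Your global version is a little cleaner in that it exhibits the witness demanded by the definition of the subspace topology in one stroke, at the cost of a slightly longer two-sided verification; the paper's version localizes the bookkeeping. Either way, the essential ingredients and the use of the hypothesis are identical.
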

\begin{proof}
    Let $x \in U_A \cup U_B$ be arbitrary. 
    If suffices to show that $x$ has a neighborhood contained in $U_A \cup U_B$ and open in $A \cup B$.
    There exist two open sets $O_A, O_B \subseteq X$ satisfying $U_A = A \cap O_A$ and $U_B = B \cap O_B$.
    
    If $x \notin B$, 
    then $O_A \setminus B$ is a neighborhood of $x$ that is open in $X$ and $A \cap (O_A \setminus B)$ is both open in $A \cup B$ and a subset of $U_{A}$.
    If $x \notin A$, 
    then a similar argument works.

    Finally,
    if $x \in A \cap B$, then let $O := O_{A} \cap O_{B}$. 
    Obviously, $A \cap O \subseteq U_{A}$ and $B \cap O \subseteq U_{B}$. 
    We know $x \in U_{A} \cap U_{B}$.
    Thus, $(A\cup B) \cap O$ satisfies the desired properties:
    it is open in $A \cup B$, contains $x$, and is a subset of $U_A \cup U_B$. 
\end{proof}

\begin{lemma}\label{lemma:pastinglemma}[Pasting Lemma]
    Let $X$ and $Y$ be topological spaces and let $A, B \subseteq X$ be closed with $X = A \cup B$. 
    If $f : X \rightarrow Y$ is such that $f_{|A}$ and $f_{|B}$ are continuous, then so is $f$.
\end{lemma}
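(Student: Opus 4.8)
The plan is to reduce the statement to the local criterion for continuity from Lemma~\ref{lemma:closurecondition}, namely that $f$ is continuous if and only if for every point $x$ and every open neighborhood $W$ of $f(x)$ in $Y$, there is a neighborhood of $x$ in $X$ whose $f$-image lies in $W$. Equivalently, I will verify that the preimage $f^{-1}(C)$ of every closed set $C \subseteq Y$ is closed in $X$, which in the presence of the decomposition $X = A \cup B$ with $A, B$ closed can be checked piece by piece.

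First I would fix a closed set $C \subseteq Y$ and write $f^{-1}(C) = \left( f^{-1}(C) \cap A \right) \cup \left( f^{-1}(C) \cap B \right)$, which is valid because $X = A \cup B$. Now observe that $f^{-1}(C) \cap A = f_{|A}^{-1}(C)$, and since $f_{|A} : A \to Y$ is continuous, this set is closed in the subspace $A$. Because $A$ is itself closed in $X$, a set closed in $A$ is closed in $X$; hence $f^{-1}(C) \cap A$ is closed in $X$. The identical argument applied to $B$ shows $f^{-1}(C) \cap B$ is closed in $X$. A finite union of closed sets is closed, so $f^{-1}(C)$ is closed in $X$. Since $C$ was an arbitrary closed subset of $Y$, the map $f$ is continuous.

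The main point requiring care—though it is not really an obstacle—is the transitivity of closedness through the subspace topology: a subset that is closed relative to $A$ need not be closed in $X$ in general, but it is precisely when $A$ is closed in $X$, which is part of the hypothesis. One could alternatively phrase the whole argument in terms of neighborhoods and Lemma~\ref{lemma:closurecondition}: given $x \in X$, it lies in $A$ or in $B$ (or both); picking a piece containing it, continuity of the restriction supplies a relatively open neighborhood, which one intersects appropriately and, using that the chosen piece is closed, upgrades to a statement about limit points in $X$. I would present the closed-preimage version since it is the shortest and avoids any case distinction on whether $x$ lies in one or both of $A, B$.
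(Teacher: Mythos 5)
Your proof is correct: the closed-preimage argument, combined with the observation that a set closed in a closed subspace is closed in the ambient space, is exactly the standard proof of the pasting lemma, and you correctly flag the one point where the closedness of $A$ and $B$ is actually used. The paper itself gives no argument here --- it only cites Engelking --- so you have simply supplied the canonical proof that the citation points to; the preliminary mention of Lemma~\ref{lemma:closurecondition} is unnecessary, since your closed-preimage version never invokes it.
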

\begin{proof}
    See~\cite[Proposition~2.1.13]{engelking1989general}.
\end{proof}

Let $\calU$ be a family of subsets of $X$.
We call $\calU$ \emph{star-finite} if every member of $\calU$ intersects at most finitely-many other members.
We call $\calU$ \emph{locally finite} if each $x \in X$ has an open neighborhood intersecting only finitely many members of $\calU$.
We call $\calU$ a \emph{cover} if its union equals $X$.
More generally, we call $\calU$ a cover of a set $B \subseteq X$ if the union of $\calU$ includes $B$. 

The following observation is of great utility.

\begin{lemma}
    Let $X$ be a topological space and $\calU$ be a locally finite family of subsets of $X$.
    Then $\left\{ \overline U \suchthat* U \in \calU \right\}$ is locally finite, too.
    Moreover, 
    \[
        \overline{\bigcup \calU} = \bigcup \left\{ \overline U \suchthat* U \in \calU \right\}.
    \]
\end{lemma}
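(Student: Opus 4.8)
The plan is to prove the two assertions in turn, in each case exploiting the local finiteness of $\calU$ together with Lemma~\ref{lemma:closurecondition}.

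First I would establish that $\calU' := \left\{ \overline U \suchthat* U \in \calU \right\}$ is locally finite. Given $x \in X$, choose by hypothesis an open neighborhood $V$ of $x$ that meets only finitely many members of $\calU$. I claim the same $V$ meets only finitely many members of $\calU'$. Indeed, if $V \cap \overline U \neq \emptyset$ for some $U \in \calU$, pick a point $y$ in this intersection; then $V$ is an open neighborhood of $y$ and $y \in \overline U$, so Lemma~\ref{lemma:closurecondition} yields $V \cap U \neq \emptyset$. Hence $\left\{ U \in \calU \suchthat* V \cap \overline U \neq \emptyset \right\} \subseteq \left\{ U \in \calU \suchthat* V \cap U \neq \emptyset \right\}$, and the latter set is finite.

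For the displayed equality, the inclusion $\supseteq$ is immediate from monotonicity of the closure operator: $U \subseteq \bigcup \calU$ gives $\overline U \subseteq \overline{\bigcup\calU}$ for each $U \in \calU$, and taking the union over $U$ gives the claim. For the reverse inclusion it suffices to show that $\bigcup \calU'$ is closed, since this set contains $\bigcup \calU$ and would then contain $\overline{\bigcup\calU}$. So let $y \notin \bigcup \calU'$; I want an open neighborhood of $y$ disjoint from $\bigcup \calU'$. Using the local finiteness of $\calU'$ just proved, choose an open neighborhood $V$ of $y$ meeting only finitely many members of $\calU'$, say $\overline{U_1}, \dots, \overline{U_n}$. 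Since $y$ lies in none of the closed sets $\overline{U_1}, \dots, \overline{U_n}$, the set $W := V \setminus \left( \overline{U_1} \cup \dots \cup \overline{U_n} \right)$ is an open neighborhood of $y$. For any $U \in \calU$: if $U \notin \{U_1, \dots, U_n\}$ then already $V \cap \overline U = \emptyset$, hence $W \cap \overline U = \emptyset$; and if $U = U_i$ for some $i$ then $W \cap \overline{U_i} = \emptyset$ by construction. Thus $W$ is disjoint from $\bigcup \calU'$, which shows that the complement of $\bigcup \calU'$ is open.

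There is no serious obstacle here. The one point worth keeping in mind is the standard reduction of the nontrivial inclusion to showing that the right-hand side is closed, together with the fact that local finiteness of $\calU$ passes to the family of closures precisely by way of Lemma~\ref{lemma:closurecondition}. The hypothesis of local finiteness is genuinely used in both parts: without it the equality fails already for a sequence of singletons accumulating at a point not in their union.
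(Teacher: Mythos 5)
Your proof is correct; the paper itself only cites Engelking (Theorems 1.1.11 and 1.1.13) for this lemma, and your argument is precisely the standard one found there: local finiteness passes to closures via Lemma~\ref{lemma:closurecondition}, and the nontrivial inclusion reduces to showing the union of closures is closed by subtracting the finitely many relevant closed sets from a suitable neighborhood. Nothing to add.
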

\begin{proof}
    See~\cite[Theorem~1.1.11, Theorem~1.1.13]{engelking1989general}.
\end{proof}

\begin{lemma} 
    If $X$ is a normal space and $(U_{\alpha})_{\alpha \in \kappa}$ is a locally finite open cover of $X$,
    then $X$ has an open cover $(V_{\alpha})_{\alpha \in \kappa}$ satisfying $\overline{V_{\alpha}} \subseteq U_{\alpha}$ for all $\alpha \in \kappa$. 
\end{lemma}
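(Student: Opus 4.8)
The plan is to shrink the sets $U_\alpha$ one index at a time by \emph{transfinite recursion}, never destroying the covering property. First I would fix a well-order on $\kappa$. I then construct open sets $V_\alpha$, $\alpha \in \kappa$, so as to maintain the two invariants $\overline{V_\alpha} \subseteq U_\alpha$ and
\[
    \calW_\alpha \;:=\; \{\, V_\beta : \beta \le \alpha \,\} \cup \{\, U_\beta : \beta > \alpha \,\} \ \text{ is a cover of } X .
\]

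At stage $\alpha$, assuming $V_\beta$ is defined for every $\beta < \alpha$ with the invariants holding below $\alpha$, I would consider the set
\[
    F_\alpha \;:=\; X \setminus \left( \bigcup_{\beta < \alpha} V_\beta \ \cup \bigcup_{\beta > \alpha} U_\beta \right),
\]
which is closed because the removed union is open. The point is that $F_\alpha \subseteq U_\alpha$: if this failed, some $x \in X$ would lie outside $U_\alpha$, outside $U_\beta$ for all $\beta > \alpha$, and outside $V_\beta$ for all $\beta < \alpha$, contradicting that the family $\{\, V_\beta : \beta < \alpha \,\} \cup \{\, U_\beta : \beta \ge \alpha \,\}$ covers $X$ (established below). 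Normality of $X$ then yields an open $V_\alpha$ with $F_\alpha \subseteq V_\alpha$ and $\overline{V_\alpha} \subseteq U_\alpha$. The collection $\calW_\alpha$ is again a cover: a point lying in no $U_\beta$ with $\beta > \alpha$ and in no $V_\beta$ with $\beta < \alpha$ belongs to $F_\alpha \subseteq V_\alpha$.

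The step I expect to be the main obstacle — and the only place local finiteness really enters — is the claim that $\{\, V_\beta : \beta < \alpha \,\} \cup \{\, U_\beta : \beta \ge \alpha \,\}$ covers $X$ for every $\alpha$; this is what makes the recursion go through at limit ordinals, where no single earlier stage supplies the invariant directly. To see it, fix $x \in X$. By local finiteness $x$ lies in $U_\beta$ for only finitely many $\beta$. If one of these indices is $\ge \alpha$, we are done; otherwise let $\gamma < \alpha$ be the largest index with $x \in U_\gamma$. The invariant at stage $\gamma$ says $\calW_\gamma$ covers $X$, and since $x \notin U_\beta$ for every $\beta > \gamma$, necessarily $x \in V_\beta$ for some $\beta \le \gamma < \alpha$.

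Finally, once all $V_\alpha$ have been constructed, the same argument shows $(V_\alpha)_{\alpha \in \kappa}$ is a cover of $X$: given $x$, choose the largest $\gamma$ with $x \in U_\gamma$ (it exists by local finiteness), and the invariant at stage $\gamma$ forces $x \in V_\beta$ for some $\beta \le \gamma$, because $x$ avoids all later $U_\beta$. Together with $\overline{V_\alpha} \subseteq U_\alpha$, which holds at every stage by construction, this proves the lemma.
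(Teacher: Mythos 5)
Your proof is correct: this is the standard transfinite-recursion ``shrinking lemma'' argument, and the paper itself offers no proof but simply cites Engelking (Theorem~1.5.18), where essentially this same argument appears. As a minor remark, your argument only uses point-finiteness of the cover, so it in fact proves a slightly stronger statement than the one claimed.
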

\begin{proof}
    See~\cite[Theorem~1.5.18]{engelking1989general}.
\end{proof}

A topological space $X$ is called \emph{paracompact} if every open cover of that space has a locally finite refinement.
We call $X$ \emph{strongly paracompact} if every open cover of that space has a star-finite refinement.

\begin{remark}\label{remark:metrizablevsstronglyparacompact}
    Metrizable spaces are paracompact, but the relation between metrizability and strong paracompactness deserves to be commented.
    
    On the one hand, 
    some strongly paracompact spaces are not metrizable. 
    The line with two origins is a (non-Hausdorff) strongly paracompact space that is not metrizable.
    Regular Lindel\"of\footnote{Recall that a space is called \emph{Lindel\"of} if every open cover has a countable subcover.} Hausdorff spaces are strongly paracompact~\cite[Chapter~2.4, p.155]{engelking1995dimension}.
    The Sorgenfrey line is a completely normal (hence regular) Hausdorff space that is Lindel\"of, hence strongly paracompact, but not metrizable.

    On the other hand, 
    some metric spaces are not strongly paracompact.
    A simple example is the Hedgehog space with uncountably many spines;
    indeed, if that space were strongly paracompact, then it were a Lindel\"of metric space and hence separable, which it is not.
    Another example is known as Roy's space, which is a non-separable completely metrizable space that is not strongly paracompact,
    and the reader is referred to Engelking's book on dimension theory~\cite[Theorem~4.1.4, Remark 4.1.6]{engelking1995dimension} for a brief outline.
    However, separable metric spaces are strongly paracompact since they are regular and Lindel\"of.
\end{remark}

In what follows, we adhere to the following informal idea:
if there is any \textit{property P} for a topological space $X$, 
then we analogously introduce for any subspace $B \subseteq X$ the \textit{property P in X}. 
The discussion below will illustrate that many implications between properties of topological spaces 
lead to analogous implications of properties of a subspace within its ambient space.\footnote{Which statements on properties of topological spaces have an ``ambient'' analogue is an intriguing metamathematical question but outside of this article's scope.}

Suppose that $B \subseteq X$ is a subset.
We say that $B$ is \emph{paracompact in $X$}
if every cover $\calU$ of $B$ by sets open in $X$ has a locally finite refinement 
that is a cover of $B$ by sets open in $X$.
We say that $B$ is \emph{strongly paracompact in $X$}
if every cover $\calU$ of $B$ by sets open in $X$ has a star-finite refinement 
that is a cover of $B$ by sets open in $X$.
Clearly, every (strongly) paracompact space is (strongly) paracompact in itself. 

\begin{lemma}\label{lemma:paracompactnessofsubsets}
    Let $X$ be a topological space and let $B$ be a closed subset.
    \begin{enumerate}[1., wide=10pt, itemindent=\parindent, leftmargin=0pt, topsep=0pt, itemsep=0pt]
    \item If $X$ is (strongly) paracompact, then $B$ is (strongly) paracompact in $X$.
    \item If $X$ is (strongly) paracompact in $X$, then $B$ is (strongly) paracompact.
    \end{enumerate}
\end{lemma}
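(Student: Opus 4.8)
The plan is to prove both parts with the same device. Because $B$ is closed, its complement $X \setminus B$ is open, so any cover of $B$ by sets open in $X$ --- or any cover of $B$ by sets open in $B$, after writing its members as traces of $X$-open sets --- can be completed to a genuine open cover of the whole space $X$ by adjoining $X \setminus B$. Applying the (strong) paracompactness hypothesis on $X$ to this enlarged cover yields a locally finite (resp.\ star-finite) open refinement of it, and the final step is to trim that refinement down to the members that still meet $B$.

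For part~1, assume $X$ is paracompact and let $\calU$ be a cover of $B$ by sets open in $X$. Then $\calU \cup \{X \setminus B\}$ is an open cover of $X$; let $\calV$ be a locally finite open refinement of it. I would set $\calV' := \{\, V \in \calV : V \cap B \neq \emptyset \,\}$ and verify: $\calV'$ still covers $B$ (any $b \in B$ lies in some $V \in \calV$, and that $V$ then meets $B$); $\calV'$ refines $\calU$ \emph{itself} rather than only $\calU \cup \{X \setminus B\}$, since a member of $\calV$ meeting $B$ cannot be contained in $X \setminus B$ and hence is contained in some member of $\calU$; and $\calV'$ is locally finite and consists of sets open in $X$, being a subfamily of $\calV$. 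This shows $B$ is paracompact in $X$. When $X$ is strongly paracompact one takes $\calV$ star-finite, and the subfamily $\calV'$ is star-finite for free, so $B$ is strongly paracompact in $X$.

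For part~2, I would first observe that ``$X$ is (strongly) paracompact in $X$'' is merely a restatement of ``$X$ is (strongly) paracompact'' (set $B = X$ in the definition of the relative notion). Now let $\calW$ be a cover of $B$ by sets open in $B$, and for each $W \in \calW$ pick $O_W$ open in $X$ with $W = O_W \cap B$. Then $\{\, O_W : W \in \calW \,\} \cup \{X \setminus B\}$ is an open cover of $X$; by hypothesis it has a locally finite (resp.\ star-finite) open refinement $\calV$. I would put $\calV' := \{\, V \cap B : V \in \calV,\ V \cap B \neq \emptyset \,\}$. Each such $V$ meets $B$, so it is not contained in $X \setminus B$, so $V \subseteq O_W$ for some $W$, whence $V \cap B \subseteq W$; thus $\calV'$ refines $\calW$, plainly covers $B$, and consists of sets open in $B$. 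Local finiteness transfers to $\calV'$ by intersecting the witnessing neighborhoods with $B$.

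The one place needing a little care --- and the closest thing to an obstacle --- is the star-finiteness bookkeeping in part~2, because passing from $\calV$ to $\{\, V \cap B \,\}$ can merge distinct members of $\calV$. The argument is that if $V_1 \cap B$ and $V_2 \cap B$ intersect then so do $V_1$ and $V_2$, and star-finiteness of $\calV$ allows only finitely many $V_2$ to meet a fixed $V_1$; hence each member of $\calV'$ meets only finitely many members of $\calV'$. Apart from this, the proof is routine manipulation with the subspace topology.
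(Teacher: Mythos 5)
Your part~1 matches the paper's proof essentially verbatim: adjoin $X \setminus B$ to get an open cover of $X$, take a locally finite (star-finite) open refinement, and discard the members contained in $X \setminus B$ (equivalently, keep those meeting $B$). No issues there.

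Part~2 is where you and the paper diverge, and the divergence traces back to a typo in the statement as printed. The hypothesis reads ``$X$ is (strongly) paracompact in $X$,'' which, as you correctly observe, is just (strong) paracompactness of $X$; under that literal reading your argument is correct and proves the classical fact that closed subspaces of (strongly) paracompact spaces are (strongly) paracompact. But the paper's own proof of part~2 begins ``Suppose that $B$ is (strongly) paracompact in $X$,'' and that strictly weaker hypothesis is the one the paper actually needs downstream (e.g.\ to deduce that $B$ is normal in Theorems~\ref{theorem:collaring:paracompactinambient} and~\ref{theorem:collaring:stronglyparacompact}, where only relative paracompactness of $B$ in $X$ is assumed). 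Under the intended hypothesis your argument breaks at the refinement step: $\{\, O_W : W \in \calW \,\} \cup \{X \setminus B\}$ is a cover of all of $X$, and paracompactness of $B$ \emph{in} $X$ only furnishes locally finite (star-finite) refinements, covering $B$, of covers \emph{of $B$} by $X$-open sets --- it gives you nothing for covers of $X$. The repair is immediate and is exactly what the paper does: apply the relative hypothesis directly to $\{\, O_W : W \in \calW \,\}$, which is already a cover of $B$ by sets open in $X$; this yields a locally finite (star-finite) family of $X$-open sets that covers $B$ and refines $\{\, O_W \,\}$, and restricting its members to $B$ finishes exactly as in your last two paragraphs. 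In particular, your star-finiteness bookkeeping (distinct $V$'s may have equal traces on $B$, but $V_1 \cap B$ meeting $V_2 \cap B$ forces $V_1$ to meet $V_2$) is correct and carries over unchanged.
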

\begin{proof}
    Suppose that $X$ is (strongly) paracompact. 
    Let $\calU$ be a cover of $B$ by sets open in $X$.
    Then $\calU \cup \{ X\setminus B \}$ is an open cover of $X$
    and has a locally finite (star-finite) open refinement $\calV$. Then
$\calV$
    is a locally finite (star-finite) open cover of $B$ by sets open in $X$. 
    If we remove all subsets of $X \setminus B$ from that cover, 
    then it is a refinement of $\calU$.
    
    Suppose that $B$ is (strongly) paracompact in $X$ and let $\calU$ be an open cover of $B$
    by sets relatively open in $B$.
    Every $U \in \calU$ is the restriction of a set $U'$ open in $X$,
    and these sets form an open cover $\calU'$ of $B$ by sets open in $X$.
    We let $\calV'$ be an open locally finite (star-finite) refinement.
    Restricting the members of $\calV'$ to the set $B$
    produces a cover $\calV$ of $B$ by sets relatively open in $B$
    that is locally finite (star-finite) and a refinement of $\calU$.
\end{proof}

\begin{example}
    Let $\bbL$ denote the \emph{closed long ray}, which is a normal Hausdorff space. 
    If $X$ is any strongly paracompact set, then $\{0\} \times X$ is strongly paracompact in $\bbL \times X$. 
    However, $\bbL$ is not paracompact.
\end{example}

A family $\calU$ of sets is called \emph{discrete} if its members have pairwise disjoint closures.\footnote{Some authors additionally require that arbitrary unions of those closures are closed again. However, the union of a locally finite family of closed sets is closed, and hence that additional requirement is redundant.}
A family $\calU$ that is the countable union of discrete families is called \emph{$\sigma$-discrete}.
$X$ is called \emph{collectionwise normal} if whenever $( A_\alpha )_{\alpha \in \kappa}$ is a discrete family of closed sets in $X$,
then there exists a disjoint open family $( O_\alpha )_{\alpha \in \kappa}$ such that $A_\alpha \subseteq O_\alpha$ for each $\alpha \in \kappa$.
Any subspace $B \subseteq X$ is called \emph{collectionwise normal in $X$} if whenever $( A_\alpha )_{\alpha \in \kappa}$ is a discrete family of closed sets in $B$, then there exists a disjoint open family $( O_\alpha )_{\alpha \in \kappa}$ of sets open in $X$ such that $A_\alpha \subseteq O_\alpha$ for each $\alpha \in \kappa$.

\begin{lemma}\label{lemma:paracompactnesswithsameindexset}
    Let $B$ be paracompact in the topological space $X$.
    If $\calU = (U_\alpha)_{\alpha\in\kappa}$ is a cover of $B$, 
    then there exists a locally finite cover $\calW = (W_\alpha)_{\alpha\in\kappa}$ of $B$
    such that $W_\alpha \subseteq U_\alpha$ for all $\alpha \in \kappa$. 
\end{lemma}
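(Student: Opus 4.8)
The plan is to pull a locally finite refinement of $\calU$ back along a choice function, so that the index set becomes $\kappa$ again. First I would invoke the hypothesis that $B$ is paracompact in $X$, applied to the cover $\calU$ of $B$ by sets open in $X$: this yields a family $\calV$ of sets open in $X$ that covers $B$, is locally finite, and refines $\calU$. By the definition of refinement, for each $V \in \calV$ there is some index in $\kappa$, call it $\alpha(V)$, with $V \subseteq U_{\alpha(V)}$; fixing such a function $V \mapsto \alpha(V)$ uses the axiom of choice, which I will apply without further comment.

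Next I would assemble the desired cover by bundling the members of $\calV$ according to this choice function. For $\alpha \in \kappa$, set
\[
    W_\alpha := \bigcup \left\{ V \in \calV \suchthat* \alpha(V) = \alpha \right\},
\]
with the convention that an empty union equals $\emptyset$. Each $W_\alpha$ is open in $X$, being a union of sets open in $X$, and $W_\alpha \subseteq U_\alpha$ because every $V$ entering that union satisfies $V \subseteq U_{\alpha(V)} = U_\alpha$. Since each $V \in \calV$ lies in exactly one such bundle, $\bigcup_{\alpha \in \kappa} W_\alpha = \bigcup \calV \supseteq B$, so $\calW = (W_\alpha)_{\alpha \in \kappa}$ is a cover of $B$ indexed by $\kappa$ with $W_\alpha \subseteq U_\alpha$, as required.

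The last thing to check is local finiteness of $\calW$, and this is the only part that needs an argument, though a short one. Fix $x \in X$. Local finiteness of $\calV$ gives an open neighborhood $N$ of $x$ meeting only finitely many members of $\calV$, say $V_1, \dots, V_n$. If $N \cap W_\alpha \neq \emptyset$ for some $\alpha \in \kappa$, then $N$ meets some $V \in \calV$ with $\alpha(V) = \alpha$; such a $V$ is among $V_1, \dots, V_n$, so $\alpha \in \{ \alpha(V_1), \dots, \alpha(V_n) \}$. Hence $N$ meets at most $n$ members of $\calW$, which proves that $\calW$ is locally finite. I do not anticipate any genuine obstacle here: the only points to keep an eye on are the appeal to choice and the fact that ``locally finite'' and ``paracompact in $X$'' are both measured with respect to the ambient space $X$, so the neighborhood $N$ above legitimately lives in $X$.
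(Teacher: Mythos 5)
Your proposal is correct and follows essentially the same route as the paper: take a locally finite open refinement of $\calU$ covering $B$, fix a choice function assigning each member of the refinement an index $\alpha$ with containment in $U_\alpha$, and bundle the members by index to form $W_\alpha$. The only difference is that you spell out the local-finiteness check, which the paper leaves implicit.
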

\begin{proof}
    Since $B$ is paracompact in $X$, there exists a locally finite family of open sets 
    $\calU' = (U'_{\alpha'})_{\alpha'\in\kappa'}$,
    indexed over some set $\kappa'$, 
    that covers $B$ and such for all $\alpha' \in \kappa'$ 
    there exists $\alpha(\alpha') \in \kappa$ with $U'_{\alpha'} \subseteq U_{\alpha(\alpha')}$. 
    We define 
    \begin{align*}
        W_{\alpha} := \bigcup \left\{ U'_{\alpha'} \in \calU' \suchthat* \alpha(\alpha') = \alpha \right\}.
    \end{align*}
    Then $(W_\alpha)_{\alpha\in\kappa}$ is a locally finite cover of $B$ by sets open in $X$
    such that $W_\alpha \subseteq U_\alpha$ for all $\alpha \in \kappa$. 
    The proof is complete. 
\end{proof}

\begin{remark}
The preceding lemma has no analogue for the notion of strong paracompactness.
    For example, take the real line $\bbR$ and its cover $\calU$ 
    by the open set $U_0 := \bbR \setminus \bbN$ and the intervals 
    $U_{i} = (i-\epsilon,i+\epsilon)$, $i \in \bbN$,
    where $\epsilon > 0$ is, say, $\epsilon = 0.01$.
    The real line is strongly paracompact, being a separable metric space. 
    If $\calV = (V_{i})_{i\in\bbN_0}$ is an open refinement such that $V_{i} \subseteq U_{i}$ for all $i \in \bbN_{0}$,
    then $\calV$ cannot be star-finite. 
\end{remark}

\begin{lemma}\label{lemma:paracompactimpliescollectionwisenormal}
    Let $B$ be a closed subspace of a Hausdorff space $X$.
    If $B$ is paracompact in $X$, then $B$ is collectionwise normal in $X$.
\end{lemma}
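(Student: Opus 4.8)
The plan is to mimic the classical proof that paracompact Hausdorff spaces are collectionwise normal, but carried out relative to the ambient space $X$, using the ``paracompact in $X$'' hypothesis in place of paracompactness of the whole space. Let $(A_\alpha)_{\alpha\in\kappa}$ be a discrete family of closed-in-$B$ sets; since $B$ is closed in $X$, each $A_\alpha$ is in fact closed in $X$, and the family remains discrete in $X$ (closures in $B$ and in $X$ coincide). The goal is a disjoint family $(O_\alpha)$ of sets open in $X$ with $A_\alpha\subseteq O_\alpha$.

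First I would establish the ``binary'' separation statement: $B$ is \emph{normal in $X$} in the sense that any two disjoint closed-in-$B$ sets $C,D$ can be separated by disjoint sets open in $X$. The standard route is: the three sets $X\setminus C$, $X\setminus D$ (wait — these need not be open) — more carefully, pick open-in-$X$ sets $G_C, G_D$ with $C = B\cap \overline{\text{(complement)}}$... Let me instead use the cover argument directly. Given disjoint closed-in-$B$ sets $C,D$, the family $\{X\setminus C,\ X\setminus D\}$ is not a cover of $B$, but $\{(X\setminus C),\ (X\setminus D)\}$ together covers $B\setminus(C\cap D)=B$ only if... Since $C\cap D=\emptyset$ we do get $B\subseteq (X\setminus C)\cup(X\setminus D)$; these are open in $X$ because $C,D$ are closed in $X$. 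Apply ``paracompact in $X$'' to get a locally finite open-in-$X$ refinement $\calV$ covering $B$; let $U$ be the union of those $V\in\calV$ with $V\cap C=\emptyset$ and $W$ the union of those $V\in\calV$ with $\overline V\cap C=\emptyset$... The clean version: set $U=\bigcup\{V\in\calV : \overline V\cap D=\emptyset\}$ and note $C\subseteq U$ since each point of $C$ lies in some $V$ missing $D$, and (shrinking once more via local finiteness, or using that $D$ is closed) one can arrange $\overline V\cap D=\emptyset$. Then $\overline U=\bigcup\{\overline V:\ \overline V\cap D=\emptyset\}$ is disjoint from $D$ by the locally-finite-union-of-closures lemma, so $U$ and $X\setminus\overline U$ separate $C$ and $D$. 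This gives normality-in-$X$; symmetry and a tiny amount of care handle the roles of $C$ and $D$.

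Next I would run the collectionwise upgrade. For each $\alpha$, set $B_\alpha = \overline{\bigcup_{\beta\neq\alpha}A_\beta}\cap B$; by discreteness this equals $\bigcup_{\beta\neq\alpha}\overline{A_\beta}\cap B$ (the union of a discrete, hence locally finite, family of closed sets is closed), so $B_\alpha$ is closed in $B$ and disjoint from $A_\alpha$. By normality-in-$X$ choose open-in-$X$ sets $O_\alpha'\supseteq A_\alpha$ and $Q_\alpha\supseteq B_\alpha$ with $O_\alpha'\cap Q_\alpha=\emptyset$; in particular $\overline{O_\alpha'}\cap A_\beta=\emptyset$ for all $\beta\neq\alpha$ (since $A_\beta\subseteq B_\alpha\subseteq Q_\alpha$ and $\overline{O_\alpha'}\subseteq X\setminus Q_\alpha$). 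Now consider $W=\bigcup_\alpha O_\alpha'$ together with $Z=X\setminus\bigcup_\alpha\overline{A_\alpha}$; these cover $B$ (indeed cover $X$), so ``paracompact in $X$'' yields a locally finite open-in-$X$ refinement $\calR$ covering $B$. Define $O_\alpha = \bigcup\{R\in\calR : R\subseteq O_\alpha',\ R\cap\overline{A_\beta}=\emptyset \text{ for } \beta\neq\alpha\}$ — more simply, let $O_\alpha = O_\alpha'\setminus\overline{\bigcup\{R\in\calR : R\cap A_\alpha=\emptyset\}}$, which is open in $X$ by the locally-finite-closure lemma, contains $A_\alpha$, and is disjoint from $O_\beta$ for $\beta\neq\alpha$ because every $R\in\calR$ meeting $A_\alpha$ is contained in $W$ and hence in some $O_\gamma'$, and $R$ can meet $A_\gamma$ for at most one $\gamma$.

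The main obstacle I anticipate is not any single deep step but the bookkeeping to ensure the final $(O_\alpha)$ are \emph{pairwise disjoint} rather than merely ``almost disjoint'': the standard whole-space proof uses a locally finite shrinking of the cover $\{O_\alpha'\}\cup\{Z\}$ and then takes $O_\alpha$ to be the union of shrunk pieces lying in $O_\alpha'$ but in no $O_\beta'$ with $\beta<\alpha$ — here one must check that each such piece, being open in $X$ and refining the cover, behaves correctly with respect to $B$, and that local finiteness of the refinement (which we only get along $B$, via ``paracompact in $X$'') suffices to conclude the closure identities. I would handle this by choosing the refinement $\calR$ once, fixing for each $R\in\calR$ a single index $\alpha(R)$ with $R\subseteq O_{\alpha(R)}'$ (or $R\subseteq Z$), and setting $O_\alpha=\bigcup\{R : \alpha(R)=\alpha\}$; disjointness is then immediate, openness-in-$X$ is clear, and $A_\alpha\subseteq O_\alpha$ follows because any $R$ meeting $A_\alpha$ must have $\alpha(R)=\alpha$ (it cannot lie in $Z$, and if $R\subseteq O_\beta'$ then $R$ meets $A_\alpha\subseteq B_\beta$, contradicting $O_\beta'\cap Q_\beta=\emptyset$, unless $\beta=\alpha$). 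This last uniqueness argument is the crux and is exactly where the separation sets $Q_\alpha$ earn their keep.
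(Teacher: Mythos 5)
Your overall strategy---first establish a normality-in-$X$ separation statement, then run the classical collectionwise upgrade---is morally the paper's approach, and the collectionwise bookkeeping you identify as the main obstacle is indeed where care is needed. But there are two genuine gaps, and in both cases you have actually pointed at the hard spot without resolving it.

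First, the normality-in-$X$ step does not go through as sketched. From the locally finite refinement $\calV$ of $\{X\setminus C,\,X\setminus D\}$ you only get, for each $x\in C$, some $V\ni x$ with $V\cap D=\emptyset$; you then want to ``arrange $\overline V\cap D=\emptyset$'' so that the closure-of-locally-finite-union lemma applies. In a merely Hausdorff ambient space there is no such shrinking available a priori: passing from $V\cap D=\emptyset$ to some $V'\ni x$ with $\overline{V'}\cap D=\emptyset$ is precisely a regularity statement, and regularity is not assumed. The paper closes exactly this gap by inserting an intermediate step: it first separates a \emph{point} from a closed set (using the Hausdorff axiom to separate point pairs and then paracompactness-in-$X$ plus local finiteness to pass to the closed set), obtaining a regularity-in-$X$ statement, and only then upgrades to separation of two closed sets. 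Your proposal jumps directly to the two-closed-sets case and the argument breaks without the regularity step in between.

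Second, your final definition $O_\alpha=\bigcup\{R\in\calR : \alpha(R)=\alpha\}$ does not yield pairwise disjoint sets; the claim that ``disjointness is then immediate'' is false. If $R\neq R'$ are members of $\calR$ with $\alpha(R)=\alpha\neq\beta=\alpha(R')$, they are distinct elements of an open cover and may well intersect---nothing forces the pieces assigned to different indices to be disjoint, and the $O_\alpha'$ themselves were constructed only to avoid $Q_\alpha$, not one another. (The alternative $O_\alpha=O_\alpha'\setminus\overline{\bigcup\{R : R\cap A_\alpha=\emptyset\}}$ also fails: disjointness can break outside $B$ since $\calR$ covers only $B$, and $A_\alpha\subseteq O_\alpha$ is not guaranteed because $\overline R$ may meet $A_\alpha$ even when $R$ does not.) The paper's fix is to first arrange a locally finite family $(O_\alpha)_{\alpha}$ with the stronger property $A_\alpha\cap\overline{O_\beta}=\emptyset$ for $\alpha\neq\beta$, and then perform the closure-subtraction shrink $Y_\alpha = O_\alpha\setminus\bigcup_{\beta\neq\alpha}\overline{O_\beta}$, whose members are genuinely disjoint by a one-line argument. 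Picking a single index per refinement piece cannot substitute for that shrink.
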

\begin{proof} 
    We develop the proof in several steps.
    \begin{enumerate}[1., wide=10pt, itemindent=\parindent, leftmargin=0pt, topsep=0pt, itemsep=0pt]
    \item 
    Let $\calA = ( A_\alpha )_{\alpha \in \kappa}$ be a discrete family of closed sets in $B$.
    Since $B$ is closed, its closed subsets are closed in $X$ too. 
    We introduce the closed sets 
    \begin{align*}
        B_\alpha := \bigcup_{ \substack{ \beta \in \kappa \\ \alpha \neq \beta } } A_\beta,
        \quad
        \alpha \in \kappa
        .
    \end{align*}

    \item Let $\alpha \in \kappa$ and let $x \in A_\alpha$, $y \in B_\alpha$.
    Then $x \neq y$ because $A_\alpha \cap B_\alpha = \emptyset$. 
    Since $X$ is Hausdorff, 
    $x$ and $y$ have disjoint open neighborhoods $O_{\alpha,x,y}$ and $U_{\alpha,x,y}$, respectively. 
    Clearly, $\left\{ U_{\alpha,x,y} \suchthat* y \in B_\alpha \right\}$ is a family of open sets that covers the closed set $B_{\alpha}$. 
Using that $B$ is paracompact in $X$, 
    we find a locally finite family 
$\left\{ U_{\alpha,x,y}' \suchthat* y \in B_\alpha \right\}$
that covers $B_\alpha$ and with $U_{\alpha,x,y}' \subseteq U_{\alpha,x,y}$.
    In particular, $x$ has an open neighborhood $\widetilde O_{\alpha,x}$ that intersects only finitely many members of this family.
    So there exists a finite subset ${P_{\alpha,x}} \subseteq B_\alpha$ satisfying $\widetilde O_{\alpha,x} \cap U_{\alpha,x,y}' = \emptyset$ for all $y \in B_\alpha \setminus {P_{\alpha,x}}$.
    Therefore, 
    \begin{align*}
        O_{\alpha,x} := \widetilde O_{\alpha,x} \cap \bigcap_{y \in {P_{\alpha,x}}} O_{\alpha,x,y}
        ,
        \quad 
        U_{\alpha,x} := \bigcup_{y \in B_\alpha} U_{\alpha,x,y}'
    \end{align*}
    are open neighborhoods of $x$ and $B_{\alpha}$, respectively, and disjoint by construction.

    \item 
    Clearly, $\left\{ U_{\alpha,x} \suchthat* x \in A_\alpha \right\}$ is a family of open neighborhoods of the closed set $B_{\alpha}$.
Using that $B$ is paracompact in $X$, we find a locally finite family 
$\left\{ U_{\alpha,x}' \suchthat* x \in A_\alpha \right\}$
that covers $B_\alpha$ and satisfies $U_{\alpha,x}' \subseteq U_{\alpha,x}$ for each $\alpha \in \kappa$. 
    We introduce another open neighborhood of $B_\alpha$ by 
    \[
        U_{\alpha} := \bigcup_{x\in A_\alpha} U_{\alpha,x}'.
    \] 
    Due to local finiteness, every $x \in A_\alpha$ has an open neighborhood $\widehat O_{\alpha,x}$
    such that there exists a finite ${K_{\alpha,x}} \subseteq A_\alpha$ with the property that 
    $\widehat O_{\alpha,x} \cap U_{\alpha,y}' = \emptyset$ for every $y \in A_\alpha \setminus {K_{\alpha,x}}$.
    We recall that $O_{\alpha,y} \cap U_{\alpha,y} = \emptyset$ for every $y \in {K_{\alpha,x}}$.
    Hence 
    \[
        O_{\alpha} = \bigcup_{x \in A_\alpha} \left( \widehat O_{\alpha,x} \cap \bigcap_{y \in {K_{\alpha,x}}} O_{\alpha,y} \right)
    \]
    is an open neighborhood of $A_\alpha$ that is union of sets disjoint from $U_\alpha$.

    \item 
    For any $\alpha \in \kappa$ we have found disjoint open neighborhoods $O_{\alpha}$ and $U_{\alpha}$ of the closed sets $A_\alpha$ and $B_\alpha$, respectively. 
    For any $\alpha, \beta \in \kappa$: 
    \begin{gather*}\alpha = \beta
        \quad\equivalent\quad  
        A_\alpha \subseteq O_{\beta}
        \quad\equivalent\quad  
        A_\alpha \cap \overline{ O_{\beta} } \neq \emptyset 
        .
    \end{gather*} 
    The family $\calO := \{ O_\alpha \}_{\alpha\in\kappa}$ is an open cover of the closed set $\bigcup \calA$.
Since $B$ is paracompact in $X$, 
    we assume without loss of generality that $\calO$ is locally finite.
    \item 
    Since $\calO$ is locally finite, the sets 
    \begin{align*}
        Y_{\alpha} := O_{\alpha} \setminus \bigcup_{\beta\in \kappa, \beta \neq \alpha} \overline{O_{\beta}}
    \end{align*}
    are open. 
    One easily checks that  
    \[
        \alpha = \beta
        \quad\equivalent\quad  
        A_\alpha \subseteq Y_{\alpha}
        \quad\equivalent\quad  
        A_\alpha \cap \overline{ Y_{\alpha} } \neq \emptyset 
        .
    \]
    Finally, if $\alpha, \beta \in \kappa$ are distinct, then $Y_\alpha \cap Y_{\beta} = \emptyset$,
    as evident from the construction. 
    \end{enumerate}
    The desired family is $\calY := \{ Y_\alpha \}_{\alpha\in\kappa}$, and the proof is complete. 
\end{proof}

\begin{corollary}
    Paracompact Hausdorff spaces are collectionwise normal. 
\end{corollary}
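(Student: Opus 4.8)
The plan is to obtain this as an immediate specialization of Lemma~\ref{lemma:paracompactimpliescollectionwisenormal}. Given a paracompact Hausdorff space $X$, I would apply that lemma with the choice $B = X$. The hypotheses are checked directly: $X$ is Hausdorff by assumption, $X$ is a closed subset of itself, and $X$ is paracompact in $X$ — this last point is exactly the observation recorded just after the definition (``every (strongly) paracompact space is (strongly) paracompact in itself''), since an open cover of $X$ by sets open in $X$ is just an open cover, and its locally finite open refinement covering $X$ is just a locally finite open refinement.

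The conclusion of the lemma is that $X$ is collectionwise normal in $X$. The remaining step is to note that, when $B = X$, the notion of being collectionwise normal in $X$ coincides verbatim with the notion of being collectionwise normal: a discrete family of closed sets in $B = X$ is precisely a discrete family of closed sets in $X$, and a disjoint family of sets open in $X$ is precisely a disjoint open family. Hence $X$ is collectionwise normal, which is what is claimed.

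There is essentially no obstacle here; the only thing to be careful about is that the two occurrences of ``$X$'' in ``paracompact in $X$'' play different roles (subspace versus ambient space) and that setting them equal is legitimate — which it is, trivially. I would keep the proof to one or two sentences, citing Lemma~\ref{lemma:paracompactimpliescollectionwisenormal} and the self-containment remark, and if desired I could equally phrase it as: ``This is the case $B = X$ of Lemma~\ref{lemma:paracompactimpliescollectionwisenormal}.''
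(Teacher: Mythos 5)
Your proposal is correct and is exactly the argument the paper intends: the corollary is the case $B = X$ of Lemma~\ref{lemma:paracompactimpliescollectionwisenormal}, using that $X$ is closed and paracompact in itself and that ``collectionwise normal in $X$'' reduces to ``collectionwise normal'' when the subspace is the whole space. Nothing further is needed.
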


Open covers of paracompact spaces have locally finite refinements. 
Much more is to be told about open covers of paracompact Hausdorff spaces, and this will be critical for our main result. 

Towards that end, we introduce two more definitions.
Given any collection $\calU$ and any set $A \subseteq X$, 
we let $\Star(A,\calU)$ denote the union of all members of $\calU$ that intersect with $A$. 
We call $\calU$ a \emph{star-refinement} of another collection $\calV$ if for every $U \in \calU$ there exists $V \in \calV$ 
such that we have the inclusion $\Star(U,\calU) \subseteq V$.

\begin{lemma}\label{lemma:normalimpliesstarrefinement}
    Every locally finite open cover $\calU$ of a normal space $X$ has an open star-refinement $\calW$.
\end{lemma}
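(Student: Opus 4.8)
The plan is to deduce the lemma from a single construction applied twice. Call a family $\calV$ a \emph{barycentric refinement} of $\calU$ if for every $x \in X$ there is some $U \in \calU$ with $\Star(\{x\},\calV) \subseteq U$, where $\Star(\{x\},\calV)$ denotes the union of the members of $\calV$ containing $x$. I would first record the elementary bookkeeping fact that if $\calW$ is a barycentric refinement of $\calV$ and $\calV$ is a barycentric refinement of $\calU$, then $\calW$ is a star-refinement of $\calU$ in the sense defined above. With that in hand it suffices to prove the key claim: \emph{every locally finite open cover of a normal space has a locally finite open barycentric refinement}. Applying the key claim to $\calU$ then yields such a $\calV$, applying it once more to $\calV$ yields $\calW$, and the bookkeeping fact shows $\calW$ is the desired star-refinement.

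To prove the key claim, let $\calU = (U_\alpha)_{\alpha\in\kappa}$ be a locally finite open cover of the normal space $X$. Using the shrinking lemma for locally finite open covers of normal spaces (stated above) twice, I would produce open covers $(V_\alpha)_{\alpha\in\kappa}$ and $(W_\alpha)_{\alpha\in\kappa}$ with $\overline{W_\alpha}\subseteq V_\alpha$ and $\overline{V_\alpha}\subseteq U_\alpha$ for every $\alpha$; both are again locally finite, their members being contained in those of $\calU$. For each finite $T\subseteq\kappa$ set
\[
    G_T := \left(\bigcap_{\alpha\in T} V_\alpha\right)\setminus\bigcup_{\beta\in\kappa\setminus T}\overline{W_\beta}.
\]
Since $\{\overline{W_\beta}\}_{\beta\in\kappa}$ is locally finite, the subtracted set is a locally finite union of closed sets, hence closed, so $G_T$ is open. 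Put $S(x) := \left\{\alpha\in\kappa \suchthat* x\in\overline{W_\alpha}\right\}$; this is finite because $\{\overline{W_\alpha}\}$ is locally finite, and nonempty because $\{W_\alpha\}$ covers $X$. One checks directly that $x\in G_{S(x)}$, so $\calG := \left\{G_{S(x)} \suchthat* x\in X\right\}$ is an open cover of $X$. Crucially, $G_T$ depends only on $T$, so $\calG$ is really indexed by finite subsets of $\kappa$, and it is locally finite: near any point only finitely many $U_\alpha$ meet a small enough neighbourhood, which forces only finitely many of the index sets $S(x)$, hence only finitely many of the sets $G_{S(x)}$, to be relevant there. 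Finally $\calG$ barycentrically refines $\calU$: given $y\in X$, choose $\gamma\in S(y)$; if $y\in G_{S(x)}$ then $y\notin\overline{W_\beta}$ for $\beta\notin S(x)$, so $S(y)\subseteq S(x)$, whence $\gamma\in S(x)$ and $G_{S(x)}\subseteq V_\gamma\subseteq U_\gamma$, and taking the union over all such $x$ gives $\Star(\{y\},\calG)\subseteq U_\gamma$.

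For the bookkeeping fact, suppose $\calW$ is a barycentric refinement of $\calV$ and $\calV$ a barycentric refinement of $\calU$. Fix $W_0\in\calW$ and a point $x_0\in W_0$. For any $W_1\in\calW$ with $W_1\cap W_0\neq\emptyset$, pick $y\in W_0\cap W_1$ and a member $V\in\calV$ with $\Star(\{y\},\calW)\subseteq V$. Then $W_0\subseteq\Star(\{y\},\calW)\subseteq V$, so $x_0\in V$; likewise $W_1\subseteq\Star(\{y\},\calW)\subseteq V$; and $V\in\calV$ contains $x_0$, so $W_1\subseteq V\subseteq\Star(\{x_0\},\calV)$. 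Hence $\Star(W_0,\calW)\subseteq\Star(\{x_0\},\calV)$, which by the barycentric property of $\calV$ is contained in some $U\in\calU$. Thus $\calW$ star-refines $\calU$, which completes the argument.

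The routine verifications — that $G_T$ is open, that $x\in G_{S(x)}$, and that the shrinking inclusions chain up — are immediate from the definitions. I expect the one genuinely delicate point to be the local finiteness of $\calG$; the trick there is precisely that $G_T$ is a function of $T$ alone, so that near each point the family $\calG$ consists of only finitely many distinct sets. Apart from the shrinking lemma and the fact that a locally finite union of closed sets is closed, no further properties of $X$ are invoked.
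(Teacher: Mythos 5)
Your proof is correct, but it follows a genuinely different route from the one in the paper. The paper shrinks the cover once, to $(V_\alpha)$ with $\overline{V_\alpha}\subseteq U_\alpha$, and then builds the star-refinement in a single pass: for each point $x$ it forms a neighborhood $W_x$ by intersecting a set $V_x$ that meets only the ``essential'' $V_\alpha$ with the relevant $V_\alpha$ and $U_\alpha$ themselves, and verifies directly that $\Star(W_z,\calW)\subseteq U_\alpha$ whenever $W_z\subseteq V_\alpha$; the intermediate observation that $y\in V_\beta$ forces $\Star(y,\calW)\subseteq U_\beta$ plays the role that barycentric refinement plays for you, but it is never isolated as a separate notion. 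You instead use the classical two-step decomposition: barycentric refinements compose to star-refinements, plus the existence of a locally finite open barycentric refinement via a double shrinking and the sets $G_T$ indexed by finite subsets of $\kappa$. Both arguments are complete; your verification of $x\in G_{S(x)}$, of the covering and local finiteness of $\calG$ (the point that $G_T$ depends only on $T$ and that a nonempty $S(x)$ meeting a small neighborhood must be contained in a fixed finite set is exactly right), and of the composition fact are all sound. What your approach buys is conceptual modularity and, as a by-product, a \emph{locally finite} star-refinement, which is slightly stronger than what the lemma asserts and convenient when the construction is iterated as in Proposition~\ref{proposition:paracompactimpliessigmadiscrete}; the cost is invoking the shrinking lemma twice and carrying the subset-indexed family $\calG$. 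The paper's one-pass construction is shorter but its point-indexed family $\{W_x\}_{x\in X}$ is not obviously locally finite.
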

\begin{proof}
    Let $\calU = \left\{ U_{\alpha} \suchthat* \alpha \in \kappa \right\}$ be a locally finite cover of $X$ by open sets,
    indexed over some set $\kappa$.
    Because $X$ is normal and $\calU$ is locally finite,
    there exists another open cover $\calV = \left\{ V_{\alpha} \suchthat* \alpha \in \kappa \right\}$ of $X$
    such that $\overline{V_{\alpha}} \subseteq U_{\alpha}$ satisfying $\alpha \in \kappa$.

    For each $x \in X$, there exists a maximal finite non-empty set $\kappa(x) \subseteq \kappa$ 
    such that $\alpha \in \kappa(x)$ if and only if every relatively open neighborhood of $x$ intersects with $V_{\alpha}$.
    If $\alpha \in \kappa(x)$, then $x \in \overline{V_{\alpha}} \subseteq U_{\alpha}$.
    We fix an open neighborhood $V_x \subseteq X$ of $x$ such that $V_x \cap V_{\alpha} \neq \emptyset$ if and only if $\alpha \in \kappa(x)$.
    We define 
    \begin{align*}
        W_{x} 
        = 
        V_{x} 
        \cap 
        \bigcap_{ \substack{ \alpha \in \kappa(x) \\ x    \in V_{\alpha} } } V_{\alpha} 
        \cap 
        \bigcap_{ \substack{ \alpha \in \kappa(x) \\ x \notin V_{\alpha} } } U_{\alpha} 
        .
    \end{align*}
    Clearly, $W_{x}$ is an open neighborhood of $x$.
    In particular, 
    $\calW = \left\{ W_{x} \suchthat* x \in B \right\}$
    is an open cover of $B$. 
    
    Let $y \in X$ and $\beta \in \kappa$ with $y \in V_{\beta}$. 
    If $x \in B$ with $y \in W_{x}$, then $W_x \subseteq V_{x}$ intersects with $V_{\beta}$, and thus $\beta \in \kappa(x)$.
    In the case $x \in V_{\beta}$, we have $W_{x} \subseteq V_{\beta} \subseteq U_{\beta}$.
    In the case $x \notin V_{\beta}$, we have $W_{x} \subseteq U_{\beta}$ too. 
    In other words, $\Star(y,\calW) \subseteq U_{\beta}$.
    
    Now consider any $z \in B$. 
    There exists $\alpha \in \kappa$ such that $z \in V_{\alpha}$ and thus $W_{z} \subseteq V_{\alpha}$. 
    The argument above now implies that every $y \in W_{z}$ satisfies $\Star(y,\calW) \subseteq U_{\alpha}$. 
    Hence $\Star(W_{z},\calW) \subseteq U_{\alpha}$.
    We conclude that $\calW$ is an open star-refinement of $\calU$. 
\end{proof}

\begin{proposition}\label{proposition:paracompactimpliessigmadiscrete}
    Let $X$ be a paracompact Hausdorff space.
    If $\calU = \left\{ U_{\alpha} \suchthat* \alpha \in \kappa \right\}$ is an open cover of $X$, 
    then $\calU$ has a locally finite open refinement which is a countable union 
    \begin{align*}
        \calV = \bigcup_{i=1}^{\infty} \calV_{i},
    \end{align*}
    where each $\calV_{i} = \left\{ V_{i,\alpha} \suchthat* \alpha \in \kappa \right\}$ is discrete and satisfies
    $V_{i,\alpha} \subseteq U_{\alpha}$ for all $\alpha \in \kappa$.
\end{proposition}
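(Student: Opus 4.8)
The statement is a sharpened form of Stone's theorem that every open cover of a paracompact Hausdorff space admits a locally finite open refinement that is a countable union of discrete families~\cite{stone1948paracompactness}. My plan is to split the argument into two parts: \textbf{(A)} produce \emph{some} locally finite, $\sigma$-discrete open refinement $\calW=\bigcup_{i\ge 1}\calW_i$ of $\calU$ (with each $\calW_i$ discrete, no control on the indexing), and \textbf{(B)} re-index $\calW$ over $\kappa$ while shrinking its members into the $U_\alpha$. Part (B) is purely formal, so the content is Part (A), which is Stone's theorem and may either be quoted or proved as follows. Since $X$ is paracompact Hausdorff it is normal --- indeed collectionwise normal, by the Corollary to Lemma~\ref{lemma:paracompactimpliescollectionwisenormal} --- so Lemma~\ref{lemma:normalimpliesstarrefinement} applies. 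Starting from a locally finite open refinement $\calG_0$ of a star-refinement of $\calU$, I would iterate: given a locally finite open cover $\calG_n$, take an open star-refinement (Lemma~\ref{lemma:normalimpliesstarrefinement}) and then a locally finite open refinement $\calG_{n+1}$ of it (paracompactness); since a refinement of a star-refinement is again a star-refinement, $\calG_{n+1}$ star-refines $\calG_n$, and $\Star(x,\calG_0)$ lies in a member of $\calU$ for every $x$. By the classical Alexandroff--Urysohn metrization lemma, such a normal sequence of covers induces a pseudometric $d$ on $X$, continuous for the topology of $X$, with $\{\,y\mid d(x,y)<2^{-n-1}\,\}\subseteq\Star(x,\calG_n)\subseteq\{\,y\mid d(x,y)<2^{-n+1}\,\}$ for all $n$ and $x$. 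Hence $\{\,\{y\mid d(x,y)<\tfrac14\}\mid x\in X\,\}$ is a $d$-open cover refining $\calU$; applying the (pseudo)metric case of Stone's theorem to it --- a well-ordering of the centers plus the Rudin-type construction forming, for each level $n$ and index $s$, the union of the balls $\{y\mid d(x,y)<2^{-n}\}$ over centers $x$ for which $s$ is least with $x$ in the $s$-th set, $x$ is uncovered at levels $<n$, and $\{y\mid d(x,y)<3\cdot 2^{-n}\}$ lies in the $s$-th set --- yields a $d$-open, $\sigma$-discrete-in-$d$, locally finite refinement. Since $d$ is continuous, these sets are open in $X$; since the topology of $X$ refines that of $d$, $d$-disjoint closures are $X$-disjoint closures, so the family is $\sigma$-discrete and locally finite in $X$ as well. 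This is the desired $\calW$.

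For Part (B), for each $W\in\calW_i$ fix $\phi(W)\in\kappa$ with $W\subseteq U_{\phi(W)}$ and set
\[
  V_{i,\alpha}:=\bigcup\left\{\,W\in\calW_i\suchthat*\phi(W)=\alpha\,\right\},\qquad \calV_i:=\left\{\,V_{i,\alpha}\suchthat*\alpha\in\kappa\,\right\},\qquad \calV:=\bigcup_{i=1}^{\infty}\calV_i.
\]
Then $V_{i,\alpha}\subseteq U_\alpha$ by the choice of $\phi$; $\calV$ is a cover since $\bigcup\calV=\bigcup\calW$; and $\calV$ is locally finite because it is coarser than the locally finite $\calW$ (any neighbourhood meeting $V_{i,\alpha}$ meets one of the $W$ composing it). Finally each $\calV_i$ is discrete: the subfamily $\{W\in\calW_i\mid\phi(W)=\alpha\}$ of the locally finite family $\calW_i$ satisfies $\overline{V_{i,\alpha}}=\bigcup\{\overline W\mid W\in\calW_i,\ \phi(W)=\alpha\}$ by the lemma on closures of locally finite families, and for $\alpha\ne\beta$ these unions are disjoint since the closures of distinct members of the discrete family $\calW_i$ are pairwise disjoint. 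This produces exactly the asserted refinement.

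\textbf{Main obstacle.} The substance lies entirely in Part (A), i.e.\ Stone's theorem. The technical core is the metric-space construction of a $\sigma$-discrete open refinement: the transfinite well-ordering argument and the verifications of the cover property, of per-level discreteness (via the clean $3\varepsilon-2\varepsilon=\varepsilon$ separation of the small balls making up distinct pieces), and of local finiteness in the style of Rudin. The subtlety I want to flag is that one should \emph{not} expect to run this construction directly with stars $\Star(x,\calG_n)$ in place of metric balls: for a non-metrizable paracompact Hausdorff $X$ the stars $\Star(x,\calG_n)$ need not form a neighbourhood basis at $x$, so the covering step would break down; passing through the continuous pseudometric $d$, whose balls do shrink, is precisely what makes the construction terminate. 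If one prefers, Part (A) may instead be invoked verbatim from~\cite{stone1948paracompactness} (see also~\cite{engelking1989general}), and only Part (B) carried out explicitly.
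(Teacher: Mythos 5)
Your proposal is correct, but Part (A) takes a genuinely different route from the paper. The paper proves Stone's result directly from the star-refinement sequence: it sets $U_{i,\alpha}$ to be the set of points having a neighborhood whose star with respect to $\calU_i$ lies in $U_\alpha$, well-orders $\kappa$, and defines $V_{i,\alpha}$ by deleting from $U_{i,\alpha}$ the closure of the union of the $U_{i+1,\beta}$ with $\beta<\alpha$; discreteness and the covering property are then verified combinatorially (the covering argument picks, for each $x$, the minimal $\alpha$ with $x\in\bigcup_i U_{i,\alpha}$ and the minimal such $i$, so it never needs the stars to form a neighborhood basis). You instead factor the argument through the Alexandroff--Urysohn pseudometrization of the normal sequence of covers and then invoke the metric (Rudin-style) case of Stone's theorem, transferring openness, discreteness, and local finiteness back along the continuous pseudometric; your Part (B) re-indexing is routine and correct, and essentially mirrors the paper's final appeal to Lemma~\ref{lemma:paracompactnesswithsameindexset}. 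Both routes are valid. Yours is more modular and leans on two classical black boxes (the pseudometrization lemma and metric Stone), which is appealing if one is willing to quote them; the paper's is self-contained within the star-refinement machinery it has already built (Lemma~\ref{lemma:normalimpliesstarrefinement}), which matters here because the manuscript explicitly includes the proof ``for technical completeness.'' One small correction to your closing remark: the obstacle you flag --- that stars $\Star(x,\calG_n)$ need not shrink to a neighborhood basis --- is real for a naive ball-for-star translation of Rudin's construction, but it does not force the detour through a pseudometric; the paper's organization of the covering step avoids the issue entirely.
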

\begin{proof}
    The proof is organized into several steps. 
    \begin{enumerate}[1., wide=10pt, itemindent=\parindent, leftmargin=0pt, topsep=0pt, itemsep=0pt]
    \item 
    Suppose that $\calU = (U_\alpha)_{\alpha\in\kappa}$ is a collection of open sets that cover of ${X}$. 
    Note that $X$ is normal. 
    We set $\calU_0 = \calU$, and recursively choose a star-refinement $\calU_{i+1}$ of $\calU_{i}$.
    Then each $\calU_{i}$ is an open refinement of $\calU$.
    
    \item 
    For $i \in \bbN$ and $\alpha \in \kappa$, we define 
    \begin{gather*}
        U_{i,\alpha}
        :=
        \left\{ x \in X \suchthat* \Star(V,\calU_{i}) \subseteq U_\alpha \text{ for an open neighborhood $V$ of $x$}  \right\}
        .
    \end{gather*}
    If $i \in \bbN$ and $U \in \calU_{i}$,
    then $\Star(U,\calU_{i})$ is a subset of some member of $\calU_{i-1}$,
    and hence there exists $\alpha \in \kappa$ with $\Star(U,\calU_{i}) \subseteq U_{\alpha}$. 
    It is thus evident that $(U_{i,\alpha})_{\alpha\in\kappa}$ covers $X$.
    
    Moreover, for any $i \geq 1$ and any open set $V \subseteq X$ 
    we have $\Star(V,\calU_{i+1}) \subseteq \Star(V,\calU_{i})$, 
    and therefore $U_{i,\alpha} \subseteq U_{i+1,\alpha}$ for every $\alpha \in \kappa$.
    
    \item 
    We assume without loss of generality that $\kappa$ is well-ordered,
    as per the axiom of choice. 
    We define $\calV_{i} := (V_{i,\alpha})_{\alpha\in\kappa}$, $i \geq 1$, where 
    \begin{gather*}
        V_{i,\alpha}
        :=
        U_{i,\alpha} \setminus \overline{ \bigcup_{\substack{ \beta \in \kappa \\ \beta < \alpha }} U_{i+1,\alpha} }
        , \quad 
        \alpha \in \kappa
        .
    \end{gather*}
    We have $V_{i,\alpha} \subseteq U_{i,\alpha} \subseteq U_{\alpha}$ for any $\alpha \in \kappa$.
    We want to show that each $\calV_{i}$ is discrete and that their union is an open cover.

    \item
    As an auxiliary result, we show that all members of $\calU_{i+1}$ that intersect with $U_{i,\alpha}$ are already subsets of $U_{i+1,\alpha}$.
So suppose that $U \in \calU_{i+1}$.
    By assumption, there exists $W \in \calU_{i}$ such that $\Star(U,\calU_{i+1}) \subseteq W$.
    Obviously, $U \subseteq W$, and $W \subseteq \Star(U,\calU_{i})$ because $W \in \calU_{i}$ with $W \cap U \neq \emptyset$.
    
    Let us now assume that $U$ intersects with some $U_{i,\alpha}$, where $\alpha \in \kappa$,
    which means there exists $x \in U \cap U_{i,\alpha}$.
    Every $x \in U_{i,\alpha}$ has an open neighborhood $V_{x,i,\alpha}$ 
    such that $\Star(V_{x,i,\alpha},\calU_{i}) \subseteq U_{\alpha}$.
Hence, $\Star(x,\calU_{i}) \subseteq U_{\alpha}$.
    But we also have $x \in U \subseteq W \in \calU_{i}$, and therefore $W \subseteq \Star(x,\calU_{i})$.
In combination, 
    \[
        \Star(U,\calU_{i+1}) \subseteq W \subseteq \Star(x,\calU_{i}) \subseteq U_{\alpha}.
    \]
    This implies $U \subseteq U_{i+1,\alpha}$, and this proves the auxiliary result. 
    
    \item
    We now prove that each $\calV_{i}$ is discrete. 
    Let $\alpha,\beta \in \kappa$ such that $\alpha < \beta$. 
    Suppose that there exists $z \in \overline{V_{i,\alpha}} \cap \overline{V_{i,\beta}}$.
    Then every open neighborhood of $z$ intersects with $V_{i,\alpha}$ and $V_{i,\beta}$.
    In particular, since $\calU_{i+1}$ is an open cover, there exists $U \in \calU_{i+1}$ that contains $z$ and that intersects both $V_{i,\alpha}$ and $V_{i,\beta}$.
    By definition and using the auxiliary result above, $U$ intersects $U_{i,\alpha}$ and is thus a subset of $U_{i+1,\alpha}$.  
    But then $U_{i+1,\alpha}$ intersects $V_{i,\beta} \subseteq X \setminus U_{i+1,\alpha}$,
    which is a contradiction. 
    We conclude that $\calV_{i}$ is a discrete family. 
    
    \item
    Now we prove the covering property. 
    We recall that $(U_{i,\alpha})_{\alpha \in \kappa}$ is an open cover of $X$ for all $i \geq 1$.
    Hence, for every $x \in X$ there exists a minimal $\alpha(x) \in \kappa$ such that there exists $i \geq 1$ with $x \in U_{i,\alpha(x)}$.
    We let $i(x) \geq 1$ be the minimal index for which $x \in U_{i(x),\alpha(x)}$.

    Since $\calU_{i(x)+2}$ is a cover of $X$, 
    there exists $U \in \calU_{i(x)+2}$ with $x \in U$.
    Now suppose that $U$ intersects $\cup_{\alpha < \alpha(x)} U_{\alpha,i(x)+1}$.
    Then $U \subseteq U_{\alpha,i(x)+2}$ for some $\alpha \in \kappa$ with $\alpha < \alpha(x)$, as by our auxiliary result, which means $\Star(U,\calU_{i(x)+2}) \subseteq U_{\alpha}$.
    But then $x \in U_{\alpha,i(x)+2}$ and $\alpha < \alpha(x)$, contradictory to our assumption of $\alpha(x)$.
    We conclude that $U$ is an open neighborhood of $x$ not intersecting $\cup_{\alpha < \alpha(x)} U_{\alpha,i(x)+1}$,
    meaning 
    \[
        x \notin \overline{\cup_{\alpha < \alpha(x)} U_{\alpha,i(x)+1}}.
    \]
    Since we have already assumed $x \in U_{i(x),\alpha(x)}$, definitions imply that $x \in V_{i(x),\alpha}$. 
    That means that $\calV = (V_{i,\alpha})_{i \in \bbN_{0},\alpha \in \kappa}$ is an open cover of $X$.
    Clearly, it can be written as the countable union $\calV = \bigcup_{i \in \bbN} \calV_{i}$.
    \item 
Finally, we extract a locally finite subcover over the same index set as $B$ is paracompact. 
    \end{enumerate}
    This completes the proof. 
\end{proof}

\begin{remark}
    Lemma~\ref{lemma:normalimpliesstarrefinement} and its proof are adapted from Lemma~5.1.16 in Engelking's monograph~\cite{engelking1989general}.
Proposition~\ref{proposition:paracompactimpliessigmadiscrete} was first shown by Stone~\cite{stone1948paracompactness};
    see Theorem~2, Theorem~1, and the remark after Theorem~1 in that reference.
    We include the proof for technical completeness since this specific result is rarely explicitly stated on its own. 
\end{remark}

\section{Notions of Collars}\label{section:collardefinitions}

In what follows, $X$ is a topological space and $B \subseteq X$ is a closed subset, equipped with the subspace topology. 

A \emph{local collar} of $B$ in $X$ is an embedding $c : \overline{U} \times [0,1] \to X$,
where $U$ is a relatively open subset of $B$,
such that $c(x,0) = x$ for all $x \in \overline{U}$,
such that $c^{-1}(B) = \overline{U} \times \{0\}$,
and such that $c( U \times [0,1) )$ is open in $X$.
We call $\overline{U}$ the \emph{base} of the local collar. 
A local collar $c : B \times [0,1] \to X$ is called a \emph{collar} of $B$.
We call $B$ \emph{collared} if it has a collar.
We will frequently put to use that any local collar $c : \overline{U} \times [0,1] \to X$ is a collar of the closed set $\overline{U}$. 

We say that $B$ is \emph{locally collared} in $X$
if every point $x \in B$ has got a relatively open neighborhood $U \subseteq B$
such that there exists a local collar $c : \overline{U} \times [0,1] \to X$ of $B$ in $X$.
A \emph{collar cover} of $B$ in $X$ is a collection
\begin{align*}\calC = \left\{ c_{\alpha} : \overline{U_{\alpha}} \times [0,1] \to X \suchthat* \alpha \in \kappa \right\} 
\end{align*}
of local collars of $B$ in $X$, indexed over some set $\kappa$,
such that the collection $\calU = (U_{\alpha})_{\alpha \in \kappa}$ is an open cover of $B$. 
We call $\calU$ the \emph{cover associated with} $\calC$.
Clearly, $B$ is locally collared in $X$ if and only if it has a collar cover in $X$. 
Notice also that for any such collar cover $\calC$, 
the collection of sets 
\begin{align*}
    \calW = \left\{ c_{\alpha}\left( U_{\alpha} \times [0,1)  \right) \suchthat* \alpha \in \kappa \right\}
\end{align*}
is a collection of open subsets of $X$ covering  $B$.

A local collar is called \emph{strong} if it is a closed embedding, that is, its image is closed in $X$.
Analogously, we define the notions of \emph{strong collar} and \emph{strong collar cover} of $B$ in $X$,
as well as the notions of \emph{strongly collared} and \emph{locally strongly collared} in $X$.

\begin{remark}\label{remark:simplifiedcollardefinition}
    Suppose that $c : B \times [0,1] \to X$,
    where $B$ is a closed subset of $X$,
    is an embedding 
    such that $c(x,0) = x$ for all $x \in B$ and such that $c( B \times [0,1) )$ is open in $X$.
    The condition $c^{-1}(B) = B \times \{0\}$ already holds because $B$ is an embedding,
    and thus $c$ is a collar of $B$ in $X$.
\end{remark}

\begin{remark}
Some authors prefer the parameter variable to range over $[0,1)$ rather than $[0,1]$;
    one sees that changing this does not alter whether some set is collared or locally collared. 
\end{remark}

Generally speaking,
whether a (locally) collared subset is also (locally) strongly collared depends on the ambient space. 
We mention two results due to Baillif~\cite[Theorem~2.3 and Theorem~2.4]{baillif2022collared}; see also Examples~3.9 and~3.14 in that reference.

\begin{theorem}\label{theorem:mathieu1}
    If $B$ is a closed locally collared subset of a regular Hausdorff space $X$, then $B$ is locally strongly collared in $X$. 
\end{theorem}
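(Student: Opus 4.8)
The plan is to take the given local collar near a point, truncate its height to $[0,\tfrac12]$, and shrink its base twice, so that the truncated collar image lands inside a \emph{closed} subset of the open set swept out by the original collar; regularity of $X$ is exactly what permits the shrinking.

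Fix $x\in B$. Local collaredness supplies a local collar $c\colon \overline U\times[0,1]\to X$ with $x\in U$. I would set $W:=c(U\times[0,1))$, which is open in $X$, note that $c$ restricts to a homeomorphism $U\times[0,1)\to W$, and observe that $W\cap B=U$ (using $c^{-1}(B)=\overline U\times\{0\}$). The fibre $S:=c(\{x\}\times[0,\tfrac12])$ is compact and contained in $W$, so, using regularity of $X$ — each point of $S$ has an open neighbourhood with closure inside $W$, and finitely many such cover $S$ — I would obtain an open set $O$ with $S\subseteq O\subseteq\overline O\subseteq W$. Then $c^{-1}(O)$ is open in $\overline U\times[0,1]$ and contains the tube $\{x\}\times[0,\tfrac12]$, so the tube lemma gives an open neighbourhood of $x$ in $\overline U$, which after intersecting with $U$ becomes a relatively open $V\subseteq B$ with $x\in V\subseteq U$ and $c(V\times[0,\tfrac12])\subseteq O$. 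Since $B$, being a subspace of a regular Hausdorff space, is itself regular Hausdorff, I would then choose a relatively open $V_0\subseteq B$ with $x\in V_0\subseteq\overline{V_0}\subseteq V$, the closure taken in $B$.

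The candidate strong local collar is $c'\colon\overline{V_0}\times[0,1]\to X$, $c'(v,t):=c(v,t/2)$. One checks it is an embedding (a restriction of $c$ precomposed with a rescaling homeomorphism), that $c'(v,0)=v$ and $(c')^{-1}(B)=\overline{V_0}\times\{0\}$ follow from the corresponding properties of $c$, and that $c'(V_0\times[0,1))=c(V_0\times[0,\tfrac12))$ is open in $X$, since $V_0\times[0,\tfrac12)$ is open in $U\times[0,1)$ and $c$ maps the latter homeomorphically onto the open set $W$. The substantive claim is that the image $C:=c'(\overline{V_0}\times[0,1])=c(\overline{V_0}\times[0,\tfrac12])$ is closed in $X$. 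Here I would argue: $\overline{V_0}$ is closed in $B$ and lies in $U$, hence closed in $U$, and $[0,\tfrac12]$ is closed in $[0,1)$, so $\overline{V_0}\times[0,\tfrac12]$ is closed in $U\times[0,1)$ and therefore $C$ is closed in $W$; meanwhile $\overline{V_0}\subseteq V$ forces $C\subseteq c(V\times[0,\tfrac12])\subseteq O\subseteq\overline O$, whence the closure of $C$ in $X$ lies in $\overline O\subseteq W$ and thus equals its closure in $W$, which is $C$. So $c'$ is a strong local collar of $B$ with base $\overline{V_0}\ni x$, and as $x$ was arbitrary, $B$ is locally strongly collared in $X$.

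The only real obstacle — and the reason for both the truncation to $[0,\tfrac12]$ and the two-step base shrinking — is arranging that the restricted collar image is trapped inside a closed subset of $W$: an arbitrary restriction of $c$ to a smaller base but full height $[0,1]$ need not even lie in $W$ (where $c$ is a homeomorphism onto an open set), and merely shrinking the base without truncating the height can let the image's closure escape $W$. The chain ``the vertical fibre over $x$ is compact $\Rightarrow$ regularity sandwiches it inside $W$ $\Rightarrow$ the tube lemma shrinks the base to match'' is precisely what overcomes this; everything else is routine verification of the local-collar axioms.
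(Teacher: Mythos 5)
The paper does not actually prove Theorem~\ref{theorem:mathieu1}; it states it as a result of Baillif and refers to \cite[Theorem~2.3]{baillif2022collared} for the proof, so there is no in-text argument to compare yours against. Your proof is correct and self-contained. The route is natural: truncate to height $[0,\tfrac12]$ so the fibre $c(\{x\}\times[0,\tfrac12])$ is a compact subset of the open set $W=c(U\times[0,1))$; regularity plus compactness produce an open $O$ with the fibre inside $O\subseteq\overline O\subseteq W$; the tube lemma shrinks the base so $c(V\times[0,\tfrac12])\subseteq O$; a further regular shrink gives $\overline{V_0}\subseteq V$. Each verification is sound: $c$ restricts to a homeomorphism $U\times[0,1)\to W$, so the image $C=c(\overline{V_0}\times[0,\tfrac12])$ of a set closed in $U\times[0,1)$ is closed in $W$; and since $\overline C^{\,X}\subseteq\overline O\subseteq W$, closedness in $W$ is the same as closedness in $X$, which is exactly the definition of a strong local collar used in the paper. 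The tube truncation and the double base-shrink are both genuinely needed for the ``trapping inside a closed subset of $W$'' step, and regularity is used in precisely the two places where the hypothesis bites (sandwiching the compact fibre and the final shrink to $\overline{V_0}$). No gaps.
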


\begin{theorem}\label{theorem:mathieu2}
    Let $B$ be a closed, countably metacompact\footnote{A topological space is called \emph{countably metacompact} if every countable open cover has a point-finite open refinement.}, collared subset of a normal Hausdorff space $X$.
    Then $B$ is strongly collared in $X$. 
\end{theorem}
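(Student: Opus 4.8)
The plan is to keep the given collar $c \colon B \times [0,1] \to X$ and only reparametrise it in the second coordinate: I will find a continuous function $\mu \colon B \to (0,1)$ and put $c'(x,t) := c(x, \mu(x)\,t)$, choosing $\mu$ small enough that the image of $c'$ becomes a \emph{closed} neighbourhood of $B$ inside the open set $Z := c(B \times [0,1))$; then $c'$ will be a strong collar. Throughout I use that $c$ is an embedding with $Z$ open, so $c$ restricts to a homeomorphism $c|_{B \times [0,1)} \colon B \times [0,1) \to Z$, and $B \subseteq Z$ via $x \mapsto c(x,0)$.

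First I would fix the region to shrink into. Since $X$ is normal and $B$, $X \setminus Z$ are disjoint closed sets, there is an open $W$ with $B \subseteq W \subseteq \overline W \subseteq Z$. Pulling $W$ back through $c|_{B\times[0,1)}$ gives an open $\Omega \subseteq B \times [0,1)$ that is a neighbourhood of $B \times \{0\}$ and whose closure in $B \times [0,1)$ is mapped by $c$ into $\overline W$. For $n \ge 2$ set $U_n := \{ x \in B \colon \{x\} \times [0,1/n] \subseteq \Omega \}$; by the tube lemma each $U_n$ is open in $B$, and $(U_n)_{n \ge 2}$ is a countable open cover of $B$ because $\Omega$ is a neighbourhood of every $(x,0)$.

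The heart of the argument is to turn this cover into the desired $\mu$, and this is exactly where the hypotheses on $B$ are used. As a closed subspace of the normal space $X$, $B$ is normal; and a normal countably metacompact space is countably paracompact --- given a decreasing sequence of closed sets with empty intersection, Ishikawa's criterion for countable metacompactness expands it to a decreasing sequence of open sets with empty intersection, which one then shrinks using normality. Hence the countable open cover $(U_n)_{n\ge2}$ of $B$ admits a subordinate locally finite partition of unity $(\rho_n)_{n \ge 2}$, and $\mu := \sum_{n \ge 2} \tfrac{1}{n} \rho_n$ does the job: it is continuous, $0 < \mu \le \tfrac12$, and if $s < \mu(x) \le 1/m$, where $m$ is the least index with $\rho_m(x) > 0$, then $(x,s) \in \{x\} \times [0,1/m] \subseteq \Omega$ since $x \in \supp\rho_m \subseteq U_m$; thus $\{ (x,s) \colon s < \mu(x) \} \subseteq \Omega$. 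I expect this to be the genuine obstacle: mere normality of $X$ or of $B$ does not produce such a $\mu$ --- this is precisely the phenomenon that fails in Dowker spaces --- so the countable metacompactness assumption is essential here. One could instead quote Dowker's insertion theorem for semicontinuous functions.

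It remains to verify that $c'(x,t) := c(x, \mu(x) t)$ is a strong collar. The map $\phi \colon (x,t) \mapsto (x, \mu(x)t)$ is a homeomorphism of $B \times [0,1]$ onto $S := \{ (x,s) \in B \times [0,1) \colon s \le \mu(x) \}$ with inverse $(x,s) \mapsto (x, s/\mu(x))$ (using $0 < \mu < 1$), and $c|_{B\times[0,1)}$ maps $S$ homeomorphically onto $c(S)$; hence $c' = c|_{B\times[0,1)} \circ \phi$ is an embedding with $c'(x,0) = x$, the set $c'(B \times [0,1)) = c(\{(x,s) \colon s < \mu(x)\})$ is open in $Z$, hence in $X$, and contains $B$, and $c'^{-1}(B) = B \times \{0\}$ follows from $\mu > 0$ together with Remark~\ref{remark:simplifiedcollardefinition}. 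Finally, $S$ is the closure of $\{(x,s)\colon s<\mu(x)\}$ in $B\times[0,1)$, hence lies in $\overline\Omega$, so $c(S) \subseteq \overline W \subseteq Z$; since $S$ is closed in $B \times [0,1)$, $c(S)$ is closed in $Z$, and since $c(S)$ is contained in the $X$-closed set $\overline W \subseteq Z$, its closure in $X$ lies in $Z$ and therefore coincides with its closure in $Z$, namely $c(S)$ itself. Thus the image of $c'$ equals $c(S)$ and is closed in $X$, and $c'$ is the sought strong collar of $B$ in $X$.
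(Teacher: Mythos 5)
Your argument is correct. Note that the paper does not actually prove Theorem~\ref{theorem:mathieu2}: it is quoted from Baillif~\cite{baillif2022collared}, so there is no in-text proof to compare against. Your construction is, however, exactly the natural one and runs parallel to the paper's own machinery in Lemma~\ref{lemma:cutfunctionthatavoidsclosedset}, Lemma~\ref{lemma:continuouscut}, and Corollary~\ref{corollary:restrictedcollarthatavoidsclosedset}: separate $B$ from $X \setminus c(B\times[0,1))$ by normality of $X$, pull the resulting neighbourhood back to $B\times[0,1)$, extract the countable ``tube'' cover $(U_n)$, build a continuous $\mu>0$ below which the graph stays inside the pulled-back neighbourhood, and reparametrise. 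The one genuinely load-bearing step that the paper's Lemma~\ref{lemma:cutfunctionthatavoidsclosedset} handles with full paracompactness of $B$ is the passage from the cover $(U_n)$ to the function $\mu$; you correctly observe that since this cover is countable, countable paracompactness suffices, and that for the normal space $B$ this is equivalent to the stated hypothesis of countable metacompactness (Ishikawa/Dowker). Your closing verification that $c(S)$ is closed in $X$ --- closed in $Z$ because $S$ is closed in $B\times[0,1)$, and with $X$-closure trapped inside $Z$ by $\overline{W}$ --- is also sound; it is the same two-step trick the paper uses in Theorem~\ref{theorem:yinuowangyoumademesmile}. The only cosmetic remark is that you do not need to pass through the closure of $\{(x,s): s<\mu(x)\}$: your own estimate $s\le\mu(x)\le 1/m$ already places all of $S$ inside $\Omega$ directly.
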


An important special case of the previous result is this:
if $X$ itself is paracompact, then all its collared closed subsets are strongly collared. 

The remainder of this section lists a few auxiliary results that center around the idea of restricting collars. 

\begin{lemma}\label{lemma:continuouscut}
    Let $X$ be a topological space and let $B \subseteq X$ be a closed subset. 
    Let $c : \overline{U} \times [0,1] \to X$ be a (strong) local collar of $B$ in $X$.
    Let $g : \overline{U} \to (0,1]$ be continuous.
    Then 
    \begin{align*}
        h : \overline{U} \times [0,1] \to X, \quad (x,t) \mapsto c(x,tg(x))
    \end{align*}
    is a (strong) local collar of $B$ in $X$.
\end{lemma}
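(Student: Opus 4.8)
The plan is to factor the map as $h = c \circ \Psi$, where $\Psi \colon \overline{U} \times [0,1] \to \overline{U} \times [0,1]$ is the fiberwise rescaling $\Psi(x,t) = (x, t\,g(x))$; this is well defined because $g$ takes values in $(0,1]$, so that $t\,g(x) \in [0,1]$ whenever $t \in [0,1]$. First I would check that $\Psi$ is an embedding onto the subset $E := \left\{ (x,s) \in \overline{U}\times[0,1] \suchthat* s \le g(x) \right\}$: the map $\Psi$ is continuous, it is injective because $g$ is strictly positive, and the assignment $(x,s) \mapsto (x, s/g(x))$ is a continuous inverse $E \to \overline{U}\times[0,1]$. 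Hence $h = c \circ \Psi$ is a composition of embeddings, and therefore an embedding. Note that $\Psi(\overline{U}\times[0,1]) = E$ and $\Psi(U\times[0,1)) = D$, where $D := \left\{ (x,s) \suchthat* x \in U,\ 0 \le s < g(x) \right\}$, so that $h(\overline{U}\times[0,1]) = c(E)$ and $h(U\times[0,1)) = c(D)$.

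Next I would verify the remaining defining properties of a local collar. That $h(x,0) = c(x,0) = x$ is immediate. For the preimage condition, $h(x,t) \in B$ is equivalent to $(x, t\,g(x)) \in c^{-1}(B) = \overline{U}\times\{0\}$, i.e.\ to $t\,g(x) = 0$, i.e.\ to $t = 0$ because $g(x) > 0$; hence $h^{-1}(B) = \overline{U}\times\{0\}$. (Alternatively, since $h(\overline{U}\times(0,1]) \subseteq c(\overline{U}\times(0,1])$ is disjoint from $B$, one could instead invoke Remark~\ref{remark:simplifiedcollardefinition} with $\overline{U}$ in place of $B$.)

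The openness condition is the step that needs care, and I expect it to be the main obstacle. The set $D$ is open in $\overline{U}\times[0,1]$: indeed $U$, being relatively open in $B \supseteq \overline{U}$, is relatively open in $\overline{U}$; the interval $[0,1)$ is open in $[0,1]$; and $\left\{ (x,s) \suchthat* s < g(x) \right\}$ is open by continuity of $g$. Since $c$ is an embedding, $c(D)$ is open in the subspace $c(\overline{U}\times[0,1])$, say $c(D) = c(\overline{U}\times[0,1]) \cap W$ with $W$ open in $X$. Because $D \subseteq U \times [0,1)$, we have $c(D) \subseteq c(U\times[0,1))$, and the latter is open in $X$ by hypothesis; therefore $c(D) = W \cap c(U\times[0,1))$ is open in $X$. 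As $h(U\times[0,1)) = c(D)$, this is the required open neighborhood of $B$, and $h$ is a local collar of $B$ in $X$.

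For the parenthetical ``strong'' assertion, $h$ is a closed embedding precisely when $h(\overline{U}\times[0,1]) = c(E)$ is closed in $X$. The set $E$ is closed in $\overline{U}\times[0,1]$, being the preimage of $(-\infty,0]$ under the continuous map $(x,s)\mapsto s - g(x)$; hence $c(E)$ is closed in $c(\overline{U}\times[0,1])$, and if $c$ is strong, so that $c(\overline{U}\times[0,1])$ is closed in $X$, then $c(E)$ is closed in $X$ as well. The delicate point throughout is the openness step: one must upgrade the merely relative openness of $c(D)$ in the image of $c$ to genuine openness in $X$, which is exactly where the defining hypothesis ``$c(U\times[0,1))$ is open in $X$'' comes in.
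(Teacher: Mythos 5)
Your proposal is correct and follows essentially the same route as the paper: factoring $h = c\circ G$ with $G(x,t)=(x,tg(x))$, deducing the embedding property from the composition, obtaining openness of $h(U\times[0,1))$ from the openness of its image inside $c(\overline{U}\times[0,1])$ together with the openness of $c(U\times[0,1))$ in $X$, and handling the strong case via closedness of $G(\overline{U}\times[0,1])$. You simply spell out details (the explicit inverse of $G$, the preimage condition) that the paper leaves implicit.
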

\begin{proof}
    Consider the mapping 
    \begin{align*}
        G : \overline{U} \times [0,1] \to \overline{U} \times [0,1], \quad (x,t) \mapsto (x,tg(x)).
    \end{align*}
    We set $h = c \circ G$. 
    Clearly, $h(x,0) = x$ for all $x \in \overline{U}$ and $h^{-1}(B) = \overline{U} \times \{0\}$.
    We already know that $c( U \times [0,1) )$ is open in $X$ and that $c$ is an embedding.
    Since $G( U \times [0,1) )$ is open in $U \times [0,1)$, 
    it follows that $h( U \times [0,1) )$ is open in $X$.
    
    $G( \overline{U} \times [0,1] )$ is closed in $\overline{U} \times [0,1]$. 
    Since embeddings with closed image map closed sets onto closed sets, 
    it follows that $h$ is closed if $c$ is closed. 
\end{proof}

The following lemma and its proof are similar to~\cite[Theorem~2.4]{baillif2022collared}. 

\begin{lemma}\label{lemma:cutfunctionthatavoidsclosedset}
    Let $X$ be a topological space, let $B \subseteq X$ be a paracompact closed Hausdorff subspace,
    and let $A \subseteq X$ be a closed set with $A \cap B = \emptyset$.
    If $c : B \times [0,1] \to X$ is a collar of $B$ in $X$,
    then there exists a continuous function $d : B \to (0,1]$ such that
    for all $x \in B$ and $t \in [0,d(x)]$ we have $c(x,t) \notin A$.
\end{lemma}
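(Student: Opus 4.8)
The plan is to realise $d$ as a convex combination, formed via a partition of unity, of local radii along which $c$ provably avoids $A$. First I would note that, since $c$ is continuous and $A$ is closed, the set $c^{-1}(X\setminus A)$ is open in $B\times[0,1]$, and it contains $B\times\{0\}$ because $c(x,0)=x\in B$ while $A\cap B=\emptyset$. Thus for every $x\in B$ there exist an open neighbourhood $V_x\subseteq B$ of $x$ and a number $\delta_x\in(0,1]$ with $V_x\times[0,\delta_x)\subseteq c^{-1}(X\setminus A)$; setting $\epsilon_x:=\delta_x/2\in(0,1)$ we obtain
\[
    c(y,t)\notin A\qquad\text{whenever } y\in V_x \text{ and } t\in[0,\epsilon_x].
\]

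Next I would invoke partitions of unity. The collection $\calV=\{V_x\suchthat x\in B\}$ is an open cover of the paracompact Hausdorff space $B$, hence it admits a locally finite partition of unity $\{\phi_\lambda\}_{\lambda\in\Lambda}$ on $B$ subordinate to $\calV$ (see, e.g., \cite[Theorem~5.1.9]{engelking1989general}); for each $\lambda\in\Lambda$ fix $x_\lambda\in B$ with $\supp\phi_\lambda\subseteq V_{x_\lambda}$. I then define
\[
    d : B \to \bbR,\qquad d(x):=\sum_{\lambda\in\Lambda}\phi_\lambda(x)\,\epsilon_{x_\lambda}.
\]
Local finiteness of $\{\phi_\lambda\}$ makes this locally a finite sum of continuous functions, so $d$ is continuous; and since $\sum_{\lambda}\phi_\lambda(x)=1$ with all $\phi_\lambda(x)\ge0$, the value $d(x)$ is a convex combination of numbers in $(0,1)$, so $d(x)\in(0,1)\subseteq(0,1]$. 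In particular $d$ maps $B$ into $(0,1]$.

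It remains to check the avoidance property. Fix $x\in B$ and $t\in[0,d(x)]$. The index set $\Lambda_x:=\{\lambda\in\Lambda\suchthat\phi_\lambda(x)>0\}$ is finite and non-empty, so I can pick $\mu\in\Lambda_x$ with $\epsilon_{x_\mu}=\max_{\lambda\in\Lambda_x}\epsilon_{x_\lambda}$. Then $t\le d(x)=\sum_{\lambda\in\Lambda_x}\phi_\lambda(x)\,\epsilon_{x_\lambda}\le\epsilon_{x_\mu}$. Since $\phi_\mu(x)>0$ we have $x\in\supp\phi_\mu\subseteq V_{x_\mu}$, and therefore $c(x,s)\notin A$ for every $s\in[0,\epsilon_{x_\mu}]$; taking $s=t$ gives $c(x,t)\notin A$, as required.

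The only genuinely delicate point is the choice of $d$. A naive candidate, such as the pointwise infimum of the radii $\epsilon_x$ over the charts $V_x$ containing a given point, generally fails to be continuous; averaging the radii against a partition of unity repairs this, and the elementary observation that a convex combination of the $\epsilon_{x_\lambda}$ is dominated by their maximum is exactly what lets the maximising chart certify that $c$ misses $A$ on the whole interval $[0,d(x)]$. Everything else --- openness of $c^{-1}(X\setminus A)$ and the existence of a subordinate locally finite partition of unity on a paracompact Hausdorff space --- is standard.
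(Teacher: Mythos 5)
Your proof is correct and follows essentially the same strategy as the paper's: an open neighborhood of $B\times\{0\}$ disjoint from $c^{-1}(A)$ yields local radii, and a locally finite partition of unity averages these into the cutoff $d$, with the avoidance property certified by the largest radius contributing at each point. You streamline slightly by observing directly that $c^{-1}(X\setminus A)$ is open (continuity of $c$ suffices), whereas the paper invokes normality of $B\times[0,1]$ to separate $B\times\{0\}$ from $c^{-1}(A)$ --- a detour, since the complement of the closed set $c^{-1}(A)$ already provides the required open neighborhood.
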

\begin{proof}
    We write $W = c( B \times [0,1] )$ for the image of the collar. 
    The intersection $W \cap A$ is closed in $W$.
    We write $D := c^{-1}(A \cap W)$ for the preimage of $A$ in $B \times [0,1]$ under $c$.
    Since $c$ is an embedding, $D$ is closed in $B \times [0,1]$.
    
    Since $B$ is paracompact and $[0,1]$ is compact, the product $B \times [0,1]$ is paracompact.
    Moreover, $B \times [0,1]$ is Hausdorff, being the product of two Hausdorff spaces. 
    Hence $B \times [0,1]$ is normal. 
    A fortiori, the disjoint closed sets $B \times \{0\}$ and $D$ are separated by open neighborhoods,
    and in particular $B \times \{0\}$ has an open neighborhood $O$ that is disjoint from $D$.
    We can assume $O \subseteq B \times [0,0.5)$ without loss of generality.
    Recall that 
    \begin{align*}
        \calO := \left\{ V \times I \suchthat* V \subseteq B \text{ open, } I \subseteq [0,1] \text{ open} \right\}
    \end{align*}
    constitutes a basis of open sets for the topology of $B \times [0,1]$.
    Hence, $O$ is the union of members of $\calO$.
    Since $O$ contains $B \times \{0\}$, 
    for every $x \in B$ there exist a relatively open neighborhood $V \subseteq B$ and $t \in (0,0.5]$ 
    such that $(x,0) \in V \times [0,t) \subseteq O$.
    
    In particular,
    for each $x \in B$ we can choose $r_x \in (0,1)$ and an open neighborhood $V_{x} \subseteq B$ 
    such that $x \in V_x$ and $c\left( V_{x} \times [0,r_x] \right) \cap A = \emptyset$.
    The unions of open sets 
    \begin{align*}
        G_{r} := \bigcup\left\{ V_{x} \suchthat* x \in B, r_x = r \right\}, \quad r \in (0,1),
    \end{align*}
    constitute an open cover of $B$.
    
    Since $B$ is paracompact and normal, 
    there exists a locally finite partition of unity $(\lambda_{r} : B \rightarrow [0,1])_{r \in (0,1)}$ subordinate to $\{G_{r}\}_{r \in (0,1)}$. 
    We define 
    \begin{align*}
        d : B \to \bbR, \quad x \mapsto \sum_{r \in (0,1)} r \cdot \lambda_{r}(x).
    \end{align*}
    The function $d$ is well-defined over $B$ because the sum in the definition is always of finitely many terms.
    Its values are in $(0,1]$.
    Moreover, suppose $y \in B$ and let $r \in (0,1)$ be maximal such that $\lambda_{r}(y) > 0$.
    Then there exists $x \in B$ with $y \in V_x$ and 
    $V_{x} \times [0,r_x] \subseteq O$ has empty intersection with $D$. 
    It follows that 
    $c\left( V_{x} \times [0,r_x] \right) \cap A = \emptyset$.
    Hence, $c(x,t) \notin A$ whenever $t \in [0,d(x)] \subseteq [0,r_x]$.
\end{proof}

\begin{corollary}\label{corollary:restrictedcollarthatavoidsclosedset}
    Let $X$ be a topological space and let $B \subseteq X$ be a paracompact closed Hausdorff subset. 
    Let $A \subseteq X$ be a closed set with $A \cap B = \emptyset$.
    If $B$ is (strongly) collared in $X$, then it has a (strong) collar whose image is disjoint from $A$.
\end{corollary}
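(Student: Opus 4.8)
The plan is to simply combine the two preceding results. Starting from a (strong) collar $c : B \times [0,1] \to X$, which is in particular a (strong) local collar of $B$ in $X$ with base $\overline{U} = B$, I would first invoke Lemma~\ref{lemma:cutfunctionthatavoidsclosedset}: since $B$ is a paracompact closed Hausdorff subspace of $X$ and $A$ is closed with $A \cap B = \emptyset$, there exists a continuous function $d : B \to (0,1]$ with the property that $c(x,t) \notin A$ for every $x \in B$ and every $t \in [0,d(x)]$.

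Next I would feed $g := d$ into Lemma~\ref{lemma:continuouscut}. Since $d$ is continuous and takes values in $(0,1]$, that lemma produces a map
\[
    h : B \times [0,1] \to X, \quad (x,t) \mapsto c(x,t\,d(x)),
\]
which is again a (strong) local collar of $B$ in $X$; because its base is all of $B$, it is a (strong) collar of $B$. It remains only to observe that its image avoids $A$. The image of $h$ is
\[
    h\bigl(B \times [0,1]\bigr) = \left\{ c(x,t\,d(x)) \suchthat* x \in B,\ t \in [0,1] \right\} \subseteq \left\{ c(x,s) \suchthat* x \in B,\ s \in [0,d(x)] \right\},
\]
and by the choice of $d$ via Lemma~\ref{lemma:cutfunctionthatavoidsclosedset} the latter set is disjoint from $A$. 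Hence $h$ is a (strong) collar of $B$ in $X$ whose image misses $A$, which is exactly what was claimed.

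There is essentially no serious obstacle here: the real work is already contained in Lemma~\ref{lemma:cutfunctionthatavoidsclosedset} (which in turn rests on the normality of $B \times [0,1]$ and a subordinate partition of unity), while Lemma~\ref{lemma:continuouscut} guarantees that reparametrising the collar fibrewise by $d$ preserves the collar property, including closedness of the embedding in the strong case. The only point demanding a line of care is the set-theoretic identification of the image of $h$, which follows immediately since $t \mapsto t\,d(x)$ maps $[0,1]$ onto $[0,d(x)]$ for each fixed $x$.
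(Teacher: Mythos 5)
Your proof is correct and follows exactly the same route as the paper: apply Lemma~\ref{lemma:cutfunctionthatavoidsclosedset} to get the cut-off function $d$, then feed it into Lemma~\ref{lemma:continuouscut} to reparametrise the collar, and note that the resulting image lies in $\{c(x,s) : x \in B,\ s \in [0,d(x)]\}$, which misses $A$. The observation that the strong case is preserved is also handled in Lemma~\ref{lemma:continuouscut} just as you say.
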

\begin{proof}
    Given a collar $c : B \times [0,1] \to X$ of $B$ in $X$,
    Lemma~\ref{lemma:cutfunctionthatavoidsclosedset} shows the existence of a continuous function $d : B \to (0,1]$ such that
    $c(x,t) \notin A$ for each $x \in B$ and $t \in [0,d(x)]$.
In accordance to Lemma~\ref{lemma:continuouscut}, we define the collar 
    \begin{align*}
        h : B \times [0,1] \to X, \quad (x,t) \mapsto c(x, d(x) t )
    \end{align*}
    now defines a collar whose image is disjoint from $A$.
    Moreover, if $c$ has closed image in $X$, then $h$ has closed image in $X$.
\end{proof}

We generally cannot assume that the collars map closed sets onto closed sets. 
But as the following result illustrates, 
the collar embedding can only fail to be closed due to its behavior towards the boundary of its domain. 
The proof is not fully trivial because we assume the ambient space merely Hausdorff. 

\begin{theorem}\label{theorem:yinuowangyoumademesmile}
    Let $X$ be a Hausdorff space and let $B \subseteq X$ be a paracompact closed subset. 
    Let $U \subseteq B$ be relatively open and let $c : \overline{U} \times [0,1] \to X$ be a local collar of $B$ in $X$.
    If $D \subseteq \overline{U} \times [0,1]$ is closed and a subset of $U \times [0,1)$,
    then $c(D)$ is closed.
    In particular,
    \begin{align*}
        \overline{c( D )}
        \subseteq 
        c( U \times [0,1) )
        .
    \end{align*}    
\end{theorem}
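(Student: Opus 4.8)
\emph{Plan.} The plan is to reduce everything to the inclusion $\overline{c(D)}\subseteq W$, where $W:=c(U\times[0,1))$ is the open set furnished by the definition of a local collar. This suffices: $c$ restricts to a homeomorphism $U\times[0,1)\to W$, and $D$, being closed in $\overline U\times[0,1]$ and contained in $U\times[0,1)$, is closed in $U\times[0,1)$; hence $c(D)$ is closed in $W$, and then $\overline{c(D)}\subseteq W$ forces $c(D)=\overline{c(D)}\cap W=\overline{c(D)}$, i.e.\ $c(D)$ is closed in $X$, and the displayed ``in particular'' is immediate. So fix $y\in\overline{c(D)}$; I want $y\in W$. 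Write $M:=c(\overline U\times[0,1])$.

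First I would dispose of the case $y\in M$. Since $c$ is an embedding, it is a homeomorphism onto $M$, and $\overline{c(D)}\cap M$ is the closure of $c(D)$ in $M$, which equals $c\bigl(\overline{D}^{\,\overline U\times[0,1]}\bigr)=c(D)$. Thus $y\in c(D)\subseteq W$. So the whole problem is to exclude limit points of $c(D)$ lying outside $M$.

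Next I would push $D$ towards the interior of the collar. The projection $\pi:\overline U\times[0,1]\to\overline U$ is closed (projection off a compact factor), so $S:=\pi(D)$ is closed in $\overline U$ and contained in $U$; as $\overline U$ is paracompact, hence normal, choose an open $O\subseteq\overline U$ with $S\subseteq O\subseteq\overline O\subseteq U$. The fibrewise supremum $f(x):=\max\{t:(x,t)\in D\}$ (with $\max\emptyset=0$) is well defined, upper semicontinuous (tube lemma), vanishes off $S$, and satisfies $f<1$ everywhere; since $\overline U$ is normal and countably paracompact there is a continuous $g:\overline U\to(0,1)$ with $f<g$, and multiplying $g$ by an Urysohn function that is $1$ on $S$ and $0$ off $O$ we arrange further that $\{g>0\}\subseteq O$. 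Then $D\subseteq\{(x,t):t<g(x)\}$, whose closure in $\overline U\times[0,1]$ lies in the closed set $E:=\{(x,t):x\in\overline O,\ t\le g(x)\}\subseteq U\times[0,1)$, and $c$ restricted to $\overline O\times[0,1]$ is a local collar with base $\overline O\subseteq U$; reparametrising via $c_E(x,s):=c(x,g(x)s)$ (Lemma~\ref{lemma:continuouscut}) we may thus assume at the outset that $D$ is closed, that $D\subseteq O\times[0,1)$ with $\overline O\subseteq U$, and that the collar has base $\overline O$.

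The hard part is then to rule out $y\in\overline{c(D)}\setminus M$. For each $x_0\in\overline O$ the fibre $c(\{x_0\}\times[0,1])$ is compact and does not contain $y$, so Hausdorffness of $X$ together with the tube lemma produces an open neighbourhood $V_{x_0}$ of $x_0$ in $\overline O$ and an open neighbourhood $A_{x_0}$ of $y$ with $A_{x_0}\cap c(V_{x_0}\times[0,1])=\emptyset$. Since $\overline O$ is not compact these neighbourhoods cannot simply be intersected, and bridging this gap is the real obstacle. Here I would apply Stone's theorem (Proposition~\ref{proposition:paracompactimpliessigmadiscrete}), valid because $\overline O$ is paracompact Hausdorff, to replace $\{V_{x_0}\}$ by a locally finite open refinement that is a countable union of discrete families, shrink it using normality, and then treat one discrete family at a time: using that $\overline O$ is collectionwise normal in $X$ (by Lemma~\ref{lemma:paracompactimpliescollectionwisenormal}, since a closed subset of a set that is paracompact in $X$ is again paracompact in $X$), I would assemble the associated neighbourhoods $A_{x_0}$ into a single open neighbourhood of $y$ disjoint from $c\bigl(\bigcup\calV\times[0,1]\bigr)\supseteq c(D)$, contradicting $y\in\overline{c(D)}$. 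It is precisely this patching step that uses the full strength of ``$B$ is paracompact in $X$''; the remaining ingredients (closedness of $\pi$, upper semicontinuity of $f$, the insertion of $g$, and that $c_E$ is again a local collar) are routine.
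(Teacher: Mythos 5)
Your overall frame is sound: reducing to $\overline{c(D)}\subseteq c(U\times[0,1))$ does suffice, and your treatment of limit points lying \emph{inside} $M=c(\overline U\times[0,1])$ (via the embedding property alone) is correct and in fact cleaner than the paper's normality argument for that case. The middle reduction (closed projection, upper semicontinuous fibrewise supremum, insertion of $g$) is essentially dispensable and has a small internal wrinkle — after multiplying by the Urysohn function, $g$ vanishes on $\partial O$, so Lemma~\ref{lemma:continuouscut} (which requires $g>0$) does not apply as stated — but none of this is where the proof lives or dies.

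The decisive step, excluding $y\in\overline{c(D)}\setminus M$, is where there is a genuine gap, and you have only a plan there, not a proof. Two problems. First, you invoke collectionwise normality of $\overline O$ \emph{in} $X$ via Lemma~\ref{lemma:paracompactimpliescollectionwisenormal}; that lemma requires the subset to be paracompact \emph{in} $X$, whereas the theorem only assumes $B$ is paracompact as a space — the paper is careful to distinguish these two notions, and the stronger one is not available here. Second, even granting it, the assembly does not close: disjoint open sets in $X$ around the bases $\overline{V_{i,\alpha}}$ say nothing about the tubes $c(V_{i,\alpha}\times[0,1])$ near $y$ (you cannot shrink the collar into those disjoint sets without losing points of $D$), so within a single discrete family you still face an infinite intersection $\bigcap_\alpha A_{x_0(\alpha)}$ of neighbourhoods of $y$ that need not be open, and across the countably many families you face a countable intersection with the same defect. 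This is precisely the obstacle you flag, and the proposed tools do not overcome it. The paper's route is structurally different at this point: rather than covering the base and controlling tubes, it fixes the exterior point $z$, covers $c(D)$ itself by sets open in $X$ and contained in $c(U\times[0,1))$, each missing a fixed neighbourhood of $z$, and uses that $c(D)$ is a closed subset of the paracompact space $c(\overline U\times[0,1])$ to pass to a locally finite such cover, whence $\overline{c(D)}\subseteq\bigcup\overline{N_{z,p}}$ avoids $z$. If you want to salvage your argument, you should replace the base-cover/tube-lemma patching by a locally finite cover of $c(D)$ of this kind.
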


\begin{proof}
    Since $\overline{U}$ is closed in $B$, it is paracompact and Hausdorff, and so is the product space $\overline{U} \times [0,1]$.
    Since $c$ is a homeomorphism onto its image, 
    the subspace topology of $c( \overline{U} \times [0,1] )$ is paracompact and Hausdorff. 
    In particular, it is normal. 
    
    Every $z \in \partial c( \overline{U} \times [0,1] )$ and $p \in c( D )$
    have disjoint respective neighborhoods $M_{z,p}$ and $N_{z,p}$ open in $X$;
    without loss of generality, $N_{z,p} \subseteq c( U \times [0,1) )$.
    Note that $\{ N_{z,p} \}_{p \in c(D)}$ constitutes an open cover of $c( D )$.
    Since $c(D)$ is closed in the paracompact subspace topology of $c( \overline{U} \times [0,1] )$,
    we assume without loss of generality that $\{ N_{z,p} \}_{p \in c(D)}$ is locally finite.
    That shows that 
    \begin{align*}
        \overline{c( D )}
        \subseteq 
        \overline{ \bigcup_{p \in c(D)} N_{z,p} }
        \subseteq 
        \bigcup_{p \in c(D)} \overline{ N_{z,p} }
        .
    \end{align*}
    By assumption, however, the latter set does not contain $z$. 
    That means that $\overline{c( D )}$ is disjoint from $\partial c( \overline{U} \times [0,1] )$.
    However, since $\overline{c( D )}$ is a subset of the closure of $c( \overline{U} \times [0,1] )$,
    it must be in the interior of the image of $c$.
    
    Suppose that $w \in c( \overline{U} \times [0,1] ) \setminus c(D)$.
    Since the subspace topology of $\overline{U} \times [0,1] )$ is normal, 
    $c(D)$ and $w$ have disjoint relatively open neighborhoods $M$ and $N$, respectively, in the image of $c$.
    We may assume that $M \subseteq c( U \times [0,1) )$ and that $N$ is the restriction of an open neighborhood $N'$ of $w$.
    But that implies $w \notin \overline{c( D )}$.
\end{proof}

\section{Collar construction in paracompact spaces}\label{section:topological}

In this section, we constructively prove the collar theorem for closed sets that are locally collared and paracompact in an ambient Hausdorff space. 
The collar theorem within paracompact Hausdorff spaces is an important special case. 
As a first step towards our main result, we show that a collection of collars with discrete bases can be restricted and joined into a combined local collar.    

\begin{lemma}\label{lemma:collaringdiscrete}
    Let $B$ be closed and paracompact in the Hausdorff space $X$.
    If
    \begin{align*}
        \calC = \left( c_{\alpha} : \overline{U_{\alpha}} \times [0,1] \to X \right)_{\alpha \in \kappa} 
    \end{align*}
    is a family of (strong) local collars of $B$ in $X$ such that the family $(U_{\alpha})_{\alpha \in \kappa}$ is discrete, 
    then $B$ has a (strong) local collar
    \begin{align*}
        c : \bigcup_{\alpha \in \kappa} \overline{U_\alpha} \times [0,1] \to X.
    \end{align*}
\end{lemma}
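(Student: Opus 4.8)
The plan is that local collars from a discrete family cannot be glued directly, since their images may overlap; so I would first shrink the collars until the shrunken images become pairwise disjoint (and, for the strong version, additionally form a locally finite family in $X$), and then glue. The gluing will be almost formal, because the total image will split as a topological coproduct.

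\emph{Separating open sets.} Since $(U_\alpha)_{\alpha\in\kappa}$ is discrete in $X$, so is the family of closures $(\overline{U_\alpha})_{\alpha\in\kappa}$, which is thus a discrete family of closed sets in $B$. As $B$ is closed, Hausdorff, and paracompact in $X$, Lemma~\ref{lemma:paracompactimpliescollectionwisenormal} shows $B$ is collectionwise normal in $X$, so there is a pairwise disjoint family $(O_\alpha)_{\alpha\in\kappa}$ of sets open in $X$ with $\overline{U_\alpha}\subseteq O_\alpha$. Write $\overline U:=\bigcup_{\alpha}\overline{U_\alpha}$, which is closed, equals $\overline{\bigcup_\alpha U_\alpha}$ by local finiteness, and has $\bigcup_\alpha U_\alpha$ relatively open in $B$. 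Then $\{O_\alpha\}_{\alpha\in\kappa}\cup\{X\setminus\overline U\}$ is a cover of $B$ by sets open in $X$, so by Lemma~\ref{lemma:paracompactnesswithsameindexset} I obtain a locally finite cover $\{W_\alpha\}_{\alpha\in\kappa}\cup\{W_{\ast}\}$ of $B$ by sets open in $X$ with $W_\alpha\subseteq O_\alpha$ and $W_{\ast}\subseteq X\setminus\overline U$. Since $\overline{U_\alpha}$ meets no $O_\beta$ with $\beta\neq\alpha$ and is disjoint from $X\setminus\overline U$, it is covered by $W_\alpha$ alone, so $\overline{U_\alpha}\subseteq W_\alpha$; the $W_\alpha$ remain pairwise disjoint, and $\{W_\alpha\}_{\alpha\in\kappa}$ is locally finite in $X$.

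\emph{Shrinking and gluing.} Each $\overline{U_\alpha}$ is a paracompact closed Hausdorff subspace of $X$ (closed because $B$ is; paracompact because $B$, being paracompact in $X$, is paracompact, and closed subspaces inherit this), and $c_\alpha$ is a (strong) collar of it. Applying Lemma~\ref{lemma:cutfunctionthatavoidsclosedset} with the closed set $A=X\setminus W_\alpha$ and then Lemma~\ref{lemma:continuouscut} yields, for a continuous $d_\alpha:\overline{U_\alpha}\to(0,1]$, a (strong) local collar $h_\alpha(x,t):=c_\alpha(x,d_\alpha(x)t)$ of $B$ in $X$ with base $\overline{U_\alpha}$ and $\operatorname{im}h_\alpha\subseteq W_\alpha$. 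Define $c:\overline U\times[0,1]\to X$ by $c(x,t):=h_\alpha(x,t)$ for $x\in\overline{U_\alpha}$, which is well-defined by disjointness of the $\overline{U_\alpha}$. Discreteness makes each $\overline{U_\alpha}$ clopen in $\overline U$, so $\overline U\times[0,1]=\bigsqcup_\alpha(\overline{U_\alpha}\times[0,1])$ is a topological coproduct and $c$ is continuous because each restriction $h_\alpha$ is. Injectivity follows since the sets $\operatorname{im}h_\alpha\subseteq W_\alpha$ are pairwise disjoint and each $h_\alpha$ is injective. Setting $Y:=\operatorname{im}c=\bigcup_\alpha\operatorname{im}h_\alpha$, disjointness of the open sets $W_\alpha$ gives $\operatorname{im}h_\alpha=Y\cap W_\alpha$, so each $\operatorname{im}h_\alpha$ is open in $Y$; hence $Y=\bigsqcup_\alpha\operatorname{im}h_\alpha$, and since $c$ restricts on each summand $\overline{U_\alpha}\times[0,1]$ to the homeomorphism $h_\alpha$ onto $\operatorname{im}h_\alpha$, the map $c$ is a homeomorphism onto $Y$, i.e.\ an embedding. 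The remaining collar conditions pass summandwise: $c(x,0)=x$; $c^{-1}(B)=\bigcup_\alpha(\overline{U_\alpha}\times\{0\})=\overline U\times\{0\}$; and $c\big((\bigcup_\alpha U_\alpha)\times[0,1)\big)=\bigcup_\alpha h_\alpha(U_\alpha\times[0,1))$ is open in $X$. In the strong case the $\operatorname{im}h_\alpha$ are closed in $X$ and, being contained in the locally finite family $\{W_\alpha\}$, form a locally finite family of closed sets, so $Y$ is closed in $X$ and $c$ is a strong local collar.

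\emph{Main obstacle.} The crux is the shrinking/separation step. One must recognize that discreteness of the bases does not prevent the collar images from overlapping, and that collectionwise normality in $X$ (to disjointify the images via Lemma~\ref{lemma:paracompactimpliescollectionwisenormal}) together with the same-index-set refinement (Lemma~\ref{lemma:paracompactnesswithsameindexset}, to additionally make the separating sets locally finite, which the strong version needs) is exactly what makes this possible. Once the shrunken images lie inside pairwise disjoint, locally finite open sets, the coproduct decomposition of the total image makes the verification that $c$ is a (strong) embedding routine.
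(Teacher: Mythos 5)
Your proof is correct and follows essentially the same route as the paper's: collectionwise normality in $X$ (obtained from paracompactness in $X$) separates the bases into pairwise disjoint open sets, the collars are shrunk so their images lie in those sets, and the pieces are glued using the resulting coproduct decomposition of both the domain and the image. The only deviation is your extra refinement step via Lemma~\ref{lemma:paracompactnesswithsameindexset} making the separating sets locally finite; this is not needed for the non-strong statement, but it does give a cleaner justification that the union of the closed images is closed in the strong case, a point the paper's proof settles rather tersely by appealing only to disjointness of the images.
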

\begin{proof}
    Since $B$ is closed in $X$, its relatively closed subsets are closed in $X$ too. 
    Since $B$ is paracompact in $X$, it is collectionwise normal in $X$,
    so there exists a pairwise disjoint family $\calO = (O_{\alpha})_{\alpha \in \kappa}$ of sets open in $X$
    for which 
    \begin{align*}
        \overline{U_{\alpha}} \subseteq O_{\alpha}, \quad \alpha \in \kappa.
    \end{align*}
    Any set relatively open in $\overline{U_{\alpha}}$ is the restriction 
    of a set relatively open in $O_{\alpha} \cap B$, which itself is relatively open in $B$.
    Consequently, the disjoint union topology and the subset topology on $\bigcup_{\alpha \in \kappa} \overline{U_\alpha}$ agree.\footnote{Moreover, taking the disjoint union commutes with taking the product with $[0,1]$.}
    
    With Corollary~\ref{corollary:restrictedcollarthatavoidsclosedset}, we fix local collars
    \begin{align*}
        c'_{\alpha} : \overline{U_{\alpha}} \times [0,1] \rightarrow X, \quad \alpha \in \kappa,
    \end{align*}
    such that the closure of the image of $c'_{\alpha}$ lies in $O_{\alpha}$.

Every set relatively open in $c'_{\alpha}( \overline{U_{\alpha}} \times [0,1] )$
    is the restriction of a relatively open subset of $O_{\alpha}$,
    which itself is open in $X$. 
    Consequently, the disjoint union topology and the subset topology on the set 
$\bigcup_{\alpha \in \kappa} c'_{\alpha}( \overline{U_{\alpha}} \times [0,1] )$
agree.
    This set is the image of the mapping from $\bigcup_{\alpha \in \kappa} \overline{U_{\alpha}} \times [0,1]$ into $X$ given by 
    \begin{align*}
        c := \bigcup_{\alpha \in \kappa} c'_\alpha.
    \end{align*}
    It is now clear that this is an embedding. 
Next, $c\left( U_{\alpha} \times [0,1) \right) = c'_{\alpha}\left( U_{\alpha} \times [0,1) \right)$ are open subsets of $X$, and so is their union $c\left( \cup_{\alpha\in\kappa} U_\alpha \times [0,1) \right)$.
    Consequently, $c$ is a local collar of $B$ in $X$.
    
    Finally, consider the case that each $c_{\alpha}$ is strong.
    Then each $c'_{\alpha}$ is strong, too, 
    and since the local collars $c'_\alpha$ have disjoint images, 
    $c$ is strong as well.
\end{proof}

In the next step, we show that a closed subset is collared 
if we have a countable locally finite collar cover.

\begin{theorem}\label{theorem:collaring:locallyfinitecountable}
    Let $B$ be a normal closed subset of a Hausdorff space $X$.
    If 
    \begin{align*}
        \calC = \left( c_{i} : \overline{U_{i}} \times [0,1] \to X \right)_{i \in \bbN} 
    \end{align*}
    is a collar cover of $B$ such that $\calU = (U_{i})_{i \in \bbN}$ is locally finite, 
    then $B$ is collared.
\end{theorem}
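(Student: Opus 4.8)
The plan is to build the global collar by an inductive gluing process, using the local finiteness of $\calU$ to ensure the construction stabilizes locally and hence produces a well-defined embedding. First I would use normality of $B$ to shrink the cover: since $\calU = (U_i)_{i \in \bbN}$ is a locally finite open cover of $B$, there is an open cover $(V_i)_{i \in \bbN}$ with $\overline{V_i} \subseteq U_i$, and iterating, a further shrinking $(W_i)_{i \in \bbN}$ with $\overline{W_i} \subseteq V_i$. The sets $\overline{V_i}$ are closed in $B$, hence closed in $X$, and each $c_i$ restricted to $\overline{V_i} \times [0,1]$ is a local collar of $B$ along $\overline{V_i}$. The idea is to define, for each $n$, a local collar $d_n$ along the closed set $\overline{V_1} \cup \cdots \cup \overline{V_n}$, so that $d_n$ agrees with $d_{n-1}$ on a neighborhood of $\overline{W_1} \cup \cdots \cup \overline{W_{n-1}}$, and take the limit.

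The inductive step is where the work lies. Given $d_{n-1}$ along $F_{n-1} := \overline{V_1} \cup \cdots \cup \overline{V_{n-1}}$ and the local collar $c_n$ along $\overline{V_n}$, I want to combine them into $d_n$ along $F_n := F_{n-1} \cup \overline{V_n}$. On the overlap $F_{n-1} \cap \overline{V_n}$ the two collars need not agree, so I cannot simply paste. Instead, I would first use Lemma~\ref{lemma:continuouscut} to cut $c_n$ down by a continuous function $g_n : \overline{V_n} \to (0,1]$ so that the cut-down collar's image, over the region where it would conflict with $d_{n-1}$'s image, is pushed into the open neighborhood $d_{n-1}(U\text{-part} \times [0,1))$ and can be re-expressed through $d_{n-1}$'s coordinates; symmetrically, cut $d_{n-1}$ down by a continuous function that vanishes to first order away from $\overline{W_1} \cup \cdots \cup \overline{W_{n-1}}$ so that the modification does not disturb $d_{n-1}$ near the part we want to keep fixed. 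The key technical tool is Theorem~\ref{theorem:yinuowangyoumademesmile}, which guarantees that when we restrict a collar to a closed subset of $U \times [0,1)$, the image is closed, so that the pasted map (via Lemma~\ref{lemma:joiningrelativeopensets} for the domain topology and Lemma~\ref{lemma:pastinglemma} for continuity) is again an embedding with open image. I would verify at each stage that $d_n$ satisfies $d_n(x,0)=x$, that $d_n^{-1}(B) = F_n \times \{0\}$, and that $d_n(\operatorname{int}_B(F_n) \times [0,1))$ is open in $X$; the latter uses that finitely many open sets of the form $c_i(U_i \times [0,1))$ cover a neighborhood of any point, exactly as in the associated cover $\calW$.

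Finally, I would pass to the limit. By local finiteness, each point $x \in B$ lies in only finitely many $U_i$, hence in $F_n$ for all large $n$ and, more importantly, lies in a neighborhood on which the construction $d_n$ stabilizes for $n$ large (because the modifications at stage $m > n$ only touch the region near $\overline{V_m}$, which avoids a fixed neighborhood of $x$ once $m$ is large enough). Thus the maps $d_n$, restricted to a shrinking product neighborhood, agree eventually, and define a map $c : B \times [0,1] \to X$ with $c(x,0) = x$. One checks $c$ is injective (two points with the same image would already be identified at some finite stage $d_n$, which is an embedding), continuous (local on $B$, and locally $c = d_n$ for large $n$, invoking Lemma~\ref{lemma:pastinglemma}), open onto its image onto an open neighborhood of $B$ (again local, using the open sets $c_i(U_i \times [0,1))$), and that $c^{-1}(B) = B \times \{0\}$, so by Remark~\ref{remark:simplifiedcollardefinition} $c$ is a collar of $B$.

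The main obstacle I anticipate is making the inductive step precise: reconciling the two collars on their overlap while keeping control of the images. Concretely, the delicate point is choosing the cut-off functions $g_n$ so that the image of the cut-down $c_n$ over the conflict region lands inside $d_{n-1}(U\text{-region} \times [0,1))$ — this requires a compactness-free argument that the continuous function "how far can we push along $c_n$ before leaving $d_{n-1}$'s open neighborhood" is positive and can be taken continuous, which is exactly the content of Lemma~\ref{lemma:cutfunctionthatavoidsclosedset} applied to the closed complement of $d_{n-1}$'s open image. Handling the non-normality of $X$ (only $B$ and the relevant subspaces are normal) forces us to route every separation argument through the paracompact/normal subspaces $\overline{U_i} \times [0,1]$ and their images, as in Theorem~\ref{theorem:yinuowangyoumademesmile}, rather than through $X$ directly.
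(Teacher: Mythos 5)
Your overall architecture --- shrink the locally finite cover, induct over $n$, and use local finiteness to stabilize and pass to a limit --- is sound, and the toolkit you cite (Lemma~\ref{lemma:continuouscut}, Lemma~\ref{lemma:cutfunctionthatavoidsclosedset}, Theorem~\ref{theorem:yinuowangyoumademesmile}, Lemma~\ref{lemma:joiningrelativeopensets}, Lemma~\ref{lemma:pastinglemma}) is the right one. However, the inductive step, which you yourself flag as the obstacle, is where essentially all of the content of the theorem lives, and what you write does not yet constitute a construction. Cutting $c_n$ down so that its image over the overlap lands inside $d_{n-1}(\,\cdot\,\times[0,1))$ and ``re-expressing it through $d_{n-1}$'s coordinates'' gives you a comparison map between the two parametrizations; it does not tell you how to define $d_n$ on $(F_{n-1}\cup\overline{V_n})\times[0,1]$ as a single embedding. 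Two concrete points are left open. First, injectivity across the two pieces: a point $c_n(x,t)$ with $x$ outside $F_{n-1}$ may coincide with a point $d_{n-1}(y,s)$ with $y$ outside $\overline{V_n}$, and nothing in your cut-off scheme excludes this. Second, even over the overlap, knowing that one image nests inside the other's coordinate neighborhood does not by itself produce the required homeomorphism of $F_n\times[0,1]$ onto a neighborhood of $F_n$; one needs an explicit device that straightens one collar relative to the other. This amalgamation of two overlapping collars is a theorem in its own right (Brown's ``finite unions of collared sets are collared'') and cannot be waved through.

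The paper circumvents exactly this difficulty by a different mechanism. It fixes a partition of unity $(\lambda_i)$ subordinate to $\calU$ and, at stage $i$, defines the new piece of the global collar only over the strip between the graphs of $\lambda_1+\dots+\lambda_{i-1}$ and $\lambda_1+\dots+\lambda_i$ in $B\times[0,1]$, using $c_i$; simultaneously it applies a self-embedding $g_i$ of all of $X$ (equal to $c_i\circ\Xi_i\circ c_i^{-1}$ on the image of $c_i$ and the identity elsewhere) that vacates the open region $c_i(O_{i,\mytent})$ just claimed by that piece. Since every later stage is post-composed with $G_i = g_1\circ\dots\circ g_i$, the images of all subsequent pieces are automatically disjoint from everything already constructed, so global injectivity and openness come for free rather than from a pasting argument on overlaps. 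If you want to complete your route instead, you must supply the two-collar amalgamation lemma, for instance via Connelly's external-collar pushing or via an ambient deformation of the kind the paper uses; as written, the proposal has a genuine gap at its central step.
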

\begin{proof} 
    The proof is organized into the following steps.
    \begin{enumerate}[1., wide=10pt, itemindent=\parindent, leftmargin=0pt, topsep=0pt, itemsep=0pt]
    \item Since $B$ is normal and $\calU$ is locally finite, 
    there exists a continuous partition of unity $( \lambda_{i} : B \rightarrow \bbR )_{i \in \bbN}$ subordinate to the cover $\calU$. 
    Each $\lambda_{i} : B \to [0,1]$ is non-zero over an open set $V_{i}$ satisfying $\overline{V_{i}} \subseteq U_{i}$.
    Notice that $\calV = \left\{ V_{i} \suchthat* i \in \bbN \right\}$ is a locally finite open cover of $B$.

    \item 
    For any $i \in \bbN$, 
    we define the two auxiliary sets 
    \begin{align*}
Q_{i,\mytent} &:= \left\{ (x,t) \in B \times [0,1] \suchthat* t \leq \frac 1 2 \lambda_{i}(x) \right\}, 
        \\
        O_{i,\mytent} &:= \left\{ (x,t) \in B \times [0,1] \suchthat* t <    \frac 1 2 \lambda_{i}(x) \right\}.
    \end{align*}
    Note that $Q_{i,\mytent}$ is closed and $O_{i,\mytent}$ is open in $B \times [0,1]$, respectively.

    \item For each $i \in \bbN$, we introduce the homeomorphism 
    \begin{align*} 
        \Xi_{i} : \overline{U_{i}} \times [0,1] \to \overline{U_{i}} \times [0,1]
    \end{align*}
    defined by 
    \begin{align*} 
        \Xi_{i}(x,t)
        = 
        \begin{cases} 
            \left( x, \frac {\lambda_{i}(x)} 2 + \left( \frac 3 4 - \frac {\lambda_{i}(x)} 2 \right) \frac 4 3 t \right)
& \text{if } t <    \frac 3 4,
            \\
            (x,t)
            & \text{if } t \geq \frac 3 4.
        \end{cases}
    \end{align*}
    Note that $\Xi_{i}$ is an embedding. 
    Its image $( \overline{U_{i}} \times [0,1] ) \setminus O_{i,\mytent}$ is closed in $\overline{U_{i}} \times [0,1]$. 
    Similarly, $Q_{i,\mytent} \cap ( \overline{U_{i}} \times [0,1] )$ is a closed set. 
    See also Figure~\ref{figure:illustrationofxi}. 
    
    \item 
    Following up on this, we introduce the transformations 
    \begin{align*}
        g_{i} : X \to X,
        \quad 
        y \mapsto 
        \begin{cases} 
            c_{i} \circ \Xi_{i} \circ c^{-1}_{i}(y)
            & \text{if } y \in W_{i},
            \\
            y
            & \text{if } y \notin W_{i}.
        \end{cases}
    \end{align*}
    By construction, $g_{i|W_{i}}$ is an embedding of $W_{i}$ into itself.
    
    We know by Theorem~\ref{theorem:yinuowangyoumademesmile} that $c_{i}( \supp(\lambda_{i}) \times [0,0.75] )$ is closed in $X$.
    Hence its complement and the open set $c_{i}( U_{i} \times [0,1)$ are two open sets that cover $X$ and over each of which $g_{i}$
    is a homeomorphism onto its image. It is now clear that $g_{i}$ is an embedding of $X$ into itself $X$.
    
    We understand that $g_{i}$ has closed image in $X$ because its complement in $X$ is the set $c_{i}(O_{i,\mytent})$, which is open in $X$.
    Theorem~\ref{theorem:yinuowangyoumademesmile} also implies that $c_{i}(Q_{i,\mytent}) \subseteq c_{i}( \supp(\lambda_{i}) \times [0,0.75] )$ is closed in $X$.

    \item We outline the recursive construction that we will use. 
    First, we use the collar $c_1$ to pull $B$ into $X$, 
    thus finding a partial definition $h_1$ of the global collar over the hypograph of $\lambda_1$. 
    We also use $g_1$ to deform all the other collars, 
    thus ensuring that the image of $h_1$ is not moved from here on. 
    Next, we use the collar $c_2$ to pull $g_1(B)$ into $g_1(X)$, 
    thus finding a partial definition $h_2$ of the global collar over the hypograph of $\lambda_1+\lambda_2$,
    and again we use $g_2$ to deform all the other collars.
    We keep iterating this procedure, defining partial collars $h_{i}$ 
    and always using modified collars that already incorporate deformations from previous steps.
    See also Figure~\ref{figure:illustrationofcollarconstruction}.
    
    We abbreviate 
    \[
        \lambda_{\Sigma,i}(x) := \lambda_{1}(x) + \lambda_{2}(x) + \cdots + \lambda_{i}(x), \quad i \in \bbN_0, \quad x \in B.
    \]
    We introduce two more auxiliary sets: 
    \begin{align*}
        Q_{\Sigma,i} &:= \left\{ (x,t) \in B \times [0,1] \suchthat* t \leq \sum\nolimits_{j=1}^{i} \lambda_{j}(x) \right\}, 
        \\
        O_{\Sigma,i} &:= \left\{ (x,t) \in B \times [0,1] \suchthat* t <    \sum\nolimits_{j=1}^{i} \lambda_{j}(x) \right\}.
    \end{align*}
    Each $Q_{\Sigma,i}$ is closed and each $O_{\Sigma,i}$ is open in $B \times [0,1]$, respectively. 
    Note that $Q_{\Sigma,0} = B \times \{0\}$. 

    For notational convenience, we also introduce the homeomorphisms
    \begin{align*}
        q_{i}
        : 
        Q_{\Sigma,i} \setminus O_{\Sigma,i-1}
        \to 
        Q_{i,\mytent},
        \quad 
        (x,t) \mapsto \left( x, ( t - \lambda_{\Sigma,i-1}(x) ) / 2 \right)
        .
    \end{align*}
    In what follows, we abbreviate 
    \begin{align*}
        G_{i} := g_1 \circ g_2 \circ \dots \circ g_{i}, \quad i \in \bbN_0.
    \end{align*}
    We now introduce a sequence of mappings. 
    We first define $h_{1} : Q_{\Sigma,1} \to X$ via 
    \begin{align*}
        h_{1}(x,t)
        =
        \begin{cases} 
            c_{1}\left( x, \frac 1 2 t \right) 
            & \text{ if } x \in U_{i}, \; 0 < t \leq \lambda_1(x)
            ,
            \\
            x
            & \text{ otherwise } x \notin U_{i}
            .
        \end{cases}
    \end{align*}
    When $i > 1$ and $h_{i-1} : Q_{\Sigma,i-1} \to X$ has already been defined, 
    then we recursively define the mapping $h_{i} : Q_{\Sigma,i} \to X$ via 
    \begin{align*}
        h_{i}
        (x,t) 
        = 
        \begin{cases} 
            G_{i-1} \circ c_{i}\left( x, \frac{1}{2}( t - \lambda_{\Sigma,i-1}(x) ) \right) 
& \text{ if } x \in U_{i}, \quad \lambda_{\Sigma,i-1}(x) < t \leq \lambda_{\Sigma,i}(x),
\\
            h_{i-1}(x,t) 
            & \text{ otherwise. }
        \end{cases}
    \end{align*}
    Let us study that sequence of mappings.

    \item 
    First, $h_{1}$ is continuous, as evident by the pasting lemma, 
    and $h_{1}( x, \lambda_1(x) ) = g_1(x) = G_{1}(x)$ for all $x \in B$. 
    Next, let $i > 1$.
    If $G_{i-1}(x) = h_{i-1}( x, \lambda_{\Sigma,i-1}(x) )$ for every $x \in B$,
    then 
    \begin{align*}
        G_{i}( x )
        &=
        G_{i-1} \circ g_{i}( x )
=
        G_{i-1} \circ c_{i}( x, \lambda_{i}(x) / 2 )
        =
        h_{i}( x, \lambda_{\Sigma,i}(x) )
    \end{align*}
    for all $x \in B$ again. 
    Going further, $h_i$ is defined via two distinct mappings into $X$,
    themselves defined on the sets $Q_{\Sigma,i-1}$ and $Q_{\Sigma,i} \setminus O_{\Sigma,i-1}$, respectively, 
    which are both closed in $B \times [0,1]$.
    They agree on the intersection of these sets: 
    \begin{align*}
        G_{i-1} \circ c_{i}( x, 0 ) = G_{i-1}( x ) = h_{i-1}( x, \lambda_{\Sigma,i-1}(x) ), 
        \quad 
        x \in B.
    \end{align*}
    If $h_{i-1}$ is continuous, then the pasting lemma implies that $h_{i}$ is continuous. 
    
    \item 
Furthermore, 
    $h_{i}$ is injective over $Q_{\Sigma,j} \setminus O_{\Sigma,j-1}$ when $1 \leq j \leq i$,
    since there it equals $G_{j-1} \circ c_{j} \circ q_{j}$ by definition.
    However, when $j < l \leq i$, then the image of $h_{i}$ over $Q_{\Sigma,l} \setminus O_{\Sigma,l-1}$
    is a subset of the image of $G_{j-1} \circ g_{j}$. 
    The image of $g_{j}$ is disjoint from $c_{j}(O_{j,\mytent})$, and $G_{j-1}$ is an embedding. 
    We conclude that $h_{i}$ is injective. 
    
    \item 
    Next, we show that $h_{i}$ maps open subsets of its domain onto relatively open subsets of its image by a very similar argument. 
    First, we already know that $G_{i-1}^{-1} h_{i}$ equals the embedding $c_{i} q_{i}$ over the set $Q_{\Sigma,i} \setminus O_{\Sigma,i-1}$.
    
    Let now $1 \leq l \leq i-1$ and assume that $G_{l}^{-1} h_{i}$
    restricted to $Q_{\Sigma,i} \setminus O_{\Sigma,l}$ maps relatively open subsets onto relatively open subsets of its image.
    Then the same is true for $g_{l} G_{l}^{-1} h_{i} = G_{l-1}^{-1} h_{i}$.
    We recall that $g_{l}(X)$ is closed in $X$.
    Taking the intersection with $G_{l-1}^{-1} h_{i}\left( Q_{\Sigma,i} \setminus O_{\Sigma,l-1} \right)$,
    we obtain a relatively closed subset of the latter 
    that equals $G_{l-1}^{-1} h_{i}\left( Q_{\Sigma,i} \setminus O_{\Sigma,l} \right)$.
    
    We also know that $c_{l} q_{l}$ is an embedding of $Q_{\Sigma,l} \setminus O_{\Sigma,l-1}$.
    Due to Theorem~\ref{theorem:yinuowangyoumademesmile},
    we know that $c_{l}( Q_{l,\mytent})$ is closed in $X$.
    Its intersection with $G_{l-1}^{-1} h_{i}\left( Q_{\Sigma,i} \setminus O_{\Sigma,l-1} \right)$
    is a relatively closed subset of the latter 
    and equals $G_{l-1}^{-1} h_{i}\left( Q_{\Sigma,l} \setminus O_{\Sigma,l-1} \right)$.
    
    In combination with Lemma~\ref{lemma:joiningrelativeopensets},
    we see that $G_{l-1}^{-1} h_{i}$ restricted to $Q_{\Sigma,i} \setminus O_{\Sigma,l-1}$
    maps relatively open subsets onto relatively open subsets of its image. 
    Iterating the recursive argument, we finally establish that $h_{i}$ maps relatively open subsets of its domain 
    onto relatively open subsets of its image. 

    \item 
    We show that $h_{i}( O_{\Sigma,i} )$ is open in $X$ by a different recursive argument.
    The definition of $h_{i}$ clearly implies that the following set is open in $X$:
    \begin{align*}
        G_{i-1}^{-1} 
        h_{i}\left( O_{\Sigma,i} \setminus O_{\Sigma,i-1} \right) 
        &
        = 
        c_{{i}}q_{i}\left( O_{\Sigma,i} \setminus O_{\Sigma,i-1} \right)
= 
        c_{{i}}\left( O_{{i},\mytent} \right)
        .
    \end{align*}
    Let now $1 \leq l \leq i-1$ and assume that 
    $ G_{l}^{-1} 
        h_{i}\left( O_{\Sigma,i} \setminus O_{\Sigma,l} \right) 
    $ is open in $X$.
    So its complement, which we call $D_l$, is closed in $X$. 
    Therefore $g_{l}(D_l)$ is closed in $g_{l}(X)$ and thus closed in $X$. 
    But the complement of $g_{l}(D_l)$ in $X$ equals 
    $ G_{l-1}^{-1} 
        h_{i}\left( O_{\Sigma,i} \setminus O_{\Sigma,l-1} \right),
    $ which must therefore be open in $X$. 
    The recursion now shows that $h_{i}\left( O_{\Sigma,i} \setminus O_{\Sigma,0} \right) = h_{i}( O_{\Sigma,i} )$ is open in $X$.

    \item So $h_{i}$ is a continuous bijection onto its image which maps open sets onto sets relatively open within its image. 
    We conclude that $h_{i}$ is an embedding.
    Moreover, we have already seen that $h_{i}( O_{\Sigma,{i}} ) = X \setminus G_{i}(X)$,
    which is clearly open in $X$. 
    
    \item Since $\calU$ is a locally finite open cover of $B$,
    for every $x \in B$ we find a relatively open neighborhood $V_x \subseteq B$ and a finite non-empty set $N(x) \subseteq \bbN$ satisfying 
    \begin{align*}
        j \in N(x) \quad\equivalent\quad \overline{U_j} \cap V_x \neq \emptyset \quad\equivalent\quad x \in \overline{U_j}.
    \end{align*}
    We write $i(x) := \max N(x)$ for any $x \in B$. 
    Note that $i(y) \leq i(x)$ for all $y \in V_x$. 
    Moreover, $\lambda_{\Sigma,i(y)}(y) = \lambda_{\Sigma,i(x)}(y) = 1$ for $y \in V_x$.

    \item We show that $h_{m}(x,t) = h_{i(x)}(x,t)$ for $t \in [0,1]$ and all $m > i(x)$ via a recursive argument. 
    Suppose that $m \in \bbN$ is greater than $i(x)$.
    Then $x \notin U_m$ implies that $\lambda_m(x) = 0$ and thus $(x,t) \notin Q_{\Sigma,m} \setminus Q_{\Sigma,m-1}$.
    By definition, $h_{m}(x,t) = h_{m-1}(x,t)$,
    and applying this argument recursively proves $h_{m}(x,t) = h_{i(x)}(x,t)$. 
    
    \item In combination, we define 
    \begin{align*}
        h : B \times [0,1] \to X, \quad (x,t) \mapsto h_{i(x)}(x,t).
    \end{align*}
    We notice that 
$h(x,0) = h_{i(x)}(x,0) = h_{1}(x,0) = x$ for $x \in B$.
It remains to be shown that $h$ is an embedding and that $h( B \times [0,1) )$ is open in $X$. 
    
    \item We first show that $h$ is injective. 
    Let $x, y \in B$ and let $s, t \in [0,1]$ and let $m \subseteq \bbN$ be finite with $i(x), i(y) \leq m$.
    Then 
    \[
        h_{i(x)}(x,s) = h_{m}(x,s) = h(x,s),
        \quad 
        h_{i(y)}(y,t) = h_{m}(y,t) = h(y,t).
    \]
    As shown above, $h_m$ is injective, and we conclude that $h$ is injective.

    \item We show that $h$ is a local homeomorphism onto its image.
    Let $x \in X$ and let the set $V_x$ be as above. 
    We already know that $h_{i(x)}$ is an embedding over the set $V_x \times [0,1]$, which is open in $B \times [0,1]$. 
    Since $h$ equals $h_{i(x)}$ over $V_x \times [0,1]$, we see that $h$ is locally a homeomorphism.
    
    \item 
    So $h$ is injective and thus bijective onto its image. 
    Since $h$ is locally a homeomorphism,
    we conclude that $h$ is a homeomorphism onto its image.
    
    \item We want to show that $h( B \times [0,1) )$ is open in $X$. 
    Again, for any $x \in X$ we let $V_x$ be the set as above. 
Then 
    \[
        h( B \times [0,1) )
        =
        \bigcup_{ x \in B }
        h( V_x \times [0,1) )
        =
        \bigcup_{ x \in B }
        h_{i(x)}( V_x \times [0,1) )
    \] 
    is the union of sets open in $X$, and thus open in $X$ itself. 
    \end{enumerate}
    We have shown that $h$ is a collar of $B$ in $X$.
    This completes the proof. 
\end{proof}

\begin{figure}[t]
    \centering
    \begin{tikzpicture}[scale=1.40]
        \pgfmathdeclarefunction{lambda}{2}{\pgfmathparse{ifthenelse(
                0 <= #1-#2 && #1-#2 < 1, 
                (#1-#2-0)^2 / (3/2),
                ifthenelse(
                    1 <= #1-#2 && #1-#2 <= 2,  
                    ( 2*(1-(#1-#2-1))*(#1-#2-1) + 1 ) / (3/2),    
                    ifthenelse(
                        2 <= #1-#2 && #1-#2 <= 3,  
                        (1-(#1-#2-2))*(1-(#1-#2-2)) / (3/2)  ,    
                        0                        
                    )                        
                )
            )
            }}
        
        \pgfmathdeclarefunction{xi}{2}{\pgfmathparse{ifthenelse(
                #1 <= 0.75, 
                (#2/2) + ( (3/4) - (#2/2) ) * (4/3) * #1
                ,
                #1
            )
            }}
        
        \begin{scope}[xshift=1.5cm]
        \draw[color=gray] (-6.5,0.0) -- (-2.5,0.0);
        \draw[color=gray] (-6.5,0.1) -- (-2.5,0.1);
        \draw[color=gray] (-6.5,0.2) -- (-2.5,0.2);
        \draw[color=gray] (-6.5,0.3) -- (-2.5,0.3);
        \draw[color=gray] (-6.5,0.4) -- (-2.5,0.4);
        \draw[color=gray] (-6.5,0.5) -- (-2.5,0.5);
        \draw[color=gray] (-6.5,0.6) -- (-2.5,0.6);
        \draw[color=gray] (-6.5,0.7) -- (-2.5,0.7);
        \draw[color=gray] (-6.5,0.8) -- (-2.5,0.8);
        \draw[color=gray] (-6.5,0.9) -- (-2.5,0.9);
        \draw[color=gray] (-6.5,1.0) -- (-2.5,1.0);
        \draw[  blue, opaque, thick, variable=\x,  domain=-6.5:-2.5, samples=100] plot (\x, {lambda(\x,-6.0)});
        \end{scope}
        
        \draw[  blue, thick, variable=\x,  domain=-0.5:3.5, samples=100] plot (\x, { xi( 1.0, lambda(\x,0) ) });
        \draw[  blue, thick, variable=\x,  domain=-0.5:3.5, samples=100] plot (\x, { xi( 0.9, lambda(\x,0) ) });
        \draw[  blue, thick, variable=\x,  domain=-0.5:3.5, samples=100] plot (\x, { xi( 0.8, lambda(\x,0) ) });
        \draw[  blue, thick, variable=\x,  domain=-0.5:3.5, samples=100] plot (\x, { xi( 0.7, lambda(\x,0) ) });
        \draw[  blue, thick, variable=\x,  domain=-0.5:3.5, samples=100] plot (\x, { xi( 0.6, lambda(\x,0) ) });
        \draw[  blue, thick, variable=\x,  domain=-0.5:3.5, samples=100] plot (\x, { xi( 0.5, lambda(\x,0) ) });
        \draw[  blue, thick, variable=\x,  domain=-0.5:3.5, samples=100] plot (\x, { xi( 0.4, lambda(\x,0) ) });
        \draw[  blue, thick, variable=\x,  domain=-0.5:3.5, samples=100] plot (\x, { xi( 0.3, lambda(\x,0) ) });
        \draw[  blue, thick, variable=\x,  domain=-0.5:3.5, samples=100] plot (\x, { xi( 0.2, lambda(\x,0) ) });
        \draw[  blue, thick, variable=\x,  domain=-0.5:3.5, samples=100] plot (\x, { xi( 0.1, lambda(\x,0) ) });
        \draw[  blue, thick, variable=\x,  domain=-0.5:3.5, samples=100] plot (\x, { xi( 0.0, lambda(\x,0) ) });            
    \end{tikzpicture}         
    \caption{Illustration of a partition of unity function associated with a local collar (left)
    and the mapping $\Xi$ associated with that function (right),
    as used in the proof of Theorem~\ref{theorem:collaring:locallyfinitecountable}.}\label{figure:illustrationofxi}
\end{figure}

\begin{figure}[t]
    \centering
    \begin{tikzpicture}[scale=1.40]
        \pgfmathdeclarefunction{lambda}{2}{\pgfmathparse{ifthenelse(
                0 <= #1-#2 && #1-#2 < 1, 
                (#1-#2-0)^2 / (3/2),
                ifthenelse(
                    1 <= #1-#2 && #1-#2 <= 2,  
                    ( 2*(1-(#1-#2-1))*(#1-#2-1) + 1 ) / (3/2),    
                    ifthenelse(
                        2 <= #1-#2 && #1-#2 <= 3,  
                        (1-(#1-#2-2))*(1-(#1-#2-2)) / (3/2)  ,    
                        0                        
                    )                        
                )
            )
            }}
        
        \draw ( 5.75,0.7) node {$\color{white}X$};
        \draw (-0.25,0.0) node {$B$};
        \draw ( 5.75,0.0) node {$0$};
        \draw ( 5.75,1.0) node {$1$};
        
        \draw[very thin, color=gray] (-0.0,0.0) -- (+5.5,0.0);
        \draw[very thin, color=gray] (-0.0,0.1) -- (+5.5,0.1);
        \draw[very thin, color=gray] (-0.0,0.2) -- (+5.5,0.2);
        \draw[very thin, color=gray] (-0.0,0.3) -- (+5.5,0.3);
        \draw[very thin, color=gray] (-0.0,0.4) -- (+5.5,0.4);
        \draw[very thin, color=gray] (-0.0,0.5) -- (+5.5,0.5);
        \draw[very thin, color=gray] (-0.0,0.6) -- (+5.5,0.6);
        \draw[very thin, color=gray] (-0.0,0.7) -- (+5.5,0.7);
        \draw[very thin, color=gray] (-0.0,0.8) -- (+5.5,0.8);
        \draw[very thin, color=gray] (-0.0,0.9) -- (+5.5,0.9);
        \draw[very thin, color=gray] (-0.0,1.0) -- (+5.5,1.0);
        \draw[   red, opaque, thick, variable=\x,  domain=-0.0:+5.5, samples=100] plot (\x, {lambda(\x, 2.0)});
        \draw[  blue, opaque, thick, variable=\x,  domain=-0.0:+5.5, samples=100] plot (\x, {lambda(\x, 0.5)});
    \end{tikzpicture} 
    ${}$\\
    \vspace{0.5cm}
    \centering
    \begin{tikzpicture}[scale=1.40]
        \pgfmathdeclarefunction{lambda}{2}{\pgfmathparse{ifthenelse(
                0 <= #1-#2 && #1-#2 < 1, 
                (#1-#2-0)^2 / (3/2),
                ifthenelse(
                    1 <= #1-#2 && #1-#2 <= 2,  
                    ( 2*(1-(#1-#2-1))*(#1-#2-1) + 1 ) / (3/2),    
                    ifthenelse(
                        2 <= #1-#2 && #1-#2 <= 3,  
                        (1-(#1-#2-2))*(1-(#1-#2-2)) / (3/2)  ,    
                        0                        
                    )                        
                )
            )
            }}
        
        \pgfmathdeclarefunction{xi}{2}{\pgfmathparse{ifthenelse(
                #1 <= 0.75, 
                (#2/2) + ( (3/4) - (#2/2) ) * (4/3) * #1
                ,
                #1
            )
            }}
        
\draw ( 5.75,0.0) node {$\color{white}0$};
        \draw ( 5.75,1.0) node {$\color{white}1$};
        \draw (-0.25,0.0) node {$B$};
        \draw ( 5.75,0.7) node {$X$};
        
        \draw[  gray, very thin, dashed, variable=\x, domain=0.0:5.5, samples=  5] plot (\x, 1.3 );
        \draw[  gray, very thin, variable=\x, domain=0.0:5.5, samples=  5] plot (\x, 1.2 );
        \draw[  gray, very thin, variable=\x, domain=0.0:5.5, samples=  5] plot (\x, 1.1 );
        \draw[  gray, very thin, variable=\x, domain=0.0:5.5, samples=  5] plot (\x, { 1.0 });
        \draw[  gray, very thin, variable=\x, domain=0.0:5.5, samples=  5] plot (\x, { 0.9 });
        \draw[  gray, very thin, variable=\x, domain=0.0:5.5, samples=  5] plot (\x, { 0.8 });
        \draw[  gray, very thin, variable=\x, domain=0.0:5.5, samples=100] plot (\x, { xi( 0.7, lambda(\x,0.5) ) });
        \draw[  gray, very thin, variable=\x, domain=0.0:5.5, samples=100] plot (\x, { xi( 0.6, lambda(\x,0.5) ) });
        \draw[  gray, very thin, variable=\x, domain=0.0:5.5, samples=100] plot (\x, { xi( 0.5, lambda(\x,0.5) ) });
        \draw[  gray, very thin, variable=\x, domain=0.0:5.5, samples=100] plot (\x, { xi( 0.4, lambda(\x,0.5) ) });
        \draw[  gray, very thin, variable=\x, domain=0.0:5.5, samples=100] plot (\x, { xi( 0.3, lambda(\x,0.5) ) });
        \draw[  gray, very thin, variable=\x, domain=0.0:5.5, samples=100] plot (\x, { xi( 0.2, lambda(\x,0.5) ) });
        \draw[  gray, very thin, variable=\x, domain=0.0:5.5, samples=100] plot (\x, { xi( 0.1, lambda(\x,0.5) ) });

\draw[   red, thick, variable=\x, domain=0.0:5.5, samples=100] plot (\x, { xi( lambda(\x,2.0)/2, lambda(\x,0.5) ) });            
        \draw[  blue, thick, variable=\x, domain=0.0:5.5, samples=100] plot (\x, { xi( 0.0,              lambda(\x,0.5) ) });            
\end{tikzpicture} 
    ${}$\\
    \caption{Illustration of the collar construction for Theorem~\ref{theorem:collaring:locallyfinitecountable}.
    We have two partition of unity functions within the parameter space $B \times [0,1]$ (top).
    As push $B$ into the space $X$ in accordance to the first collar, the other collars are pushed along that deformation as well (bottom).
    The trajectory of the points $B$ along all those pushes determines the final collar.}\label{figure:illustrationofcollarconstruction}
\end{figure}

We are now in a position to easily prove the main outcome of this section. 

\begin{theorem}\label{theorem:collaring:paracompactinambient}
    Let $B$ be a closed subset of a Hausdorff space $X$.
    If $B$ is locally collared and paracompact in $X$, then $B$ is collared.
\end{theorem}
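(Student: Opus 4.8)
The plan is to reduce the statement to the two preparatory results already established, Lemma~\ref{lemma:collaringdiscrete} and Theorem~\ref{theorem:collaring:locallyfinitecountable}, using Stone's theorem (Proposition~\ref{proposition:paracompactimpliessigmadiscrete}) as the bridge. First I would record that $B$, viewed as a space in its own right, is paracompact Hausdorff and hence normal: it is Hausdorff because it is closed in the Hausdorff space $X$, and it is paracompact because it is paracompact in $X$ --- any open cover of $B$ by relatively open sets extends to a cover of $B$ by sets open in $X$, and a locally finite refinement of the latter restricts to a locally finite refinement of the former, exactly along the lines of the proof of Lemma~\ref{lemma:paracompactnessofsubsets}.

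Next, since $B$ is locally collared in $X$, I would fix a collar cover $\calC = (c_\alpha : \overline{U_\alpha}\times[0,1]\to X)_{\alpha\in\kappa}$ of $B$ in $X$ with associated open cover $\calU = (U_\alpha)_{\alpha\in\kappa}$ of $B$, and apply Proposition~\ref{proposition:paracompactimpliessigmadiscrete} to the paracompact Hausdorff space $B$ and the cover $\calU$. This yields a locally finite open refinement $\calV = \bigcup_{i\in\bbN}\calV_i$ of $\calU$, where each layer $\calV_i = (V_{i,\alpha})_{\alpha\in\kappa}$ is discrete and satisfies $V_{i,\alpha}\subseteq U_\alpha$ for all $\alpha\in\kappa$. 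I would then note the elementary fact that restricting a local collar to a smaller relatively open base is again a local collar: each $c_{i,\alpha} := c_\alpha|_{\overline{V_{i,\alpha}}\times[0,1]}$ is a local collar of $B$ in $X$ with base $\overline{V_{i,\alpha}}$, since the embedding property and the identities $c_{i,\alpha}(x,0)=x$ and $c_{i,\alpha}^{-1}(B)=\overline{V_{i,\alpha}}\times\{0\}$ are inherited from $c_\alpha$, while $c_{i,\alpha}(V_{i,\alpha}\times[0,1))$ is open in $X$ because $V_{i,\alpha}$ is relatively open in $U_\alpha$ and $c_\alpha$ restricts to a homeomorphism of $U_\alpha\times[0,1)$ onto the open set $c_\alpha(U_\alpha\times[0,1))$.

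For each fixed $i\in\bbN$, the family $(c_{i,\alpha})_{\alpha\in\kappa}$ is a family of local collars of $B$ in $X$ whose base index family $\calV_i$ is discrete, so Lemma~\ref{lemma:collaringdiscrete} applies --- its hypotheses, $B$ closed and paracompact in the Hausdorff space $X$, are precisely ours --- and produces a single local collar $\widetilde c_i$ defined on $\bigcup_{\alpha\in\kappa}\overline{V_{i,\alpha}}\times[0,1]$. Since $\calV_i$ is a subfamily of the locally finite $\calV$, this base equals $\overline{W_i}$ with $W_i := \bigcup_{\alpha\in\kappa}V_{i,\alpha}=\bigcup\calV_i$ relatively open in $B$, so $\widetilde c_i$ has base $\overline{W_i}$. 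The countable family $\calW := (W_i)_{i\in\bbN}$ then covers $B$, because $\bigcup_i W_i = \bigcup\calV\supseteq B$, and it is locally finite, because a neighborhood of a point that meets only finitely many members of $\calV$ can meet only finitely many of the $W_i$. Thus $(\widetilde c_i)_{i\in\bbN}$ is a locally finite collar cover of $B$ indexed by $\bbN$, and, $B$ being a normal closed subset of the Hausdorff space $X$, Theorem~\ref{theorem:collaring:locallyfinitecountable} delivers that $B$ is collared.

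I do not expect a genuine obstacle here: the substance of the theorem has already been spent in Proposition~\ref{proposition:paracompactimpliessigmadiscrete}, Lemma~\ref{lemma:collaringdiscrete}, and Theorem~\ref{theorem:collaring:locallyfinitecountable}, and this argument is bookkeeping that threads them together. The only spots needing a line of care are (i) upgrading ``paracompact in $X$'' to ``paracompact'' so that Stone's theorem is applicable to $B$, and (ii) the verification that restricting a local collar to a smaller relatively open base keeps it a local collar; both are short. One should also dispose of the harmless degenerate possibilities --- empty $V_{i,\alpha}$ or $W_i$, or only finitely many nonempty layers --- none of which affects the reasoning.
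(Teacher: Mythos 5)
Your proposal is correct and follows essentially the same route as the paper's proof: apply Stone's decomposition (Proposition~\ref{proposition:paracompactimpliessigmadiscrete}) to the base cover, collar each discrete layer via Lemma~\ref{lemma:collaringdiscrete}, and finish with Theorem~\ref{theorem:collaring:locallyfinitecountable} applied to the resulting countable locally finite collar cover. The extra care you take with restricting local collars to smaller bases and with $\overline{W_i}=\bigcup_\alpha\overline{V_{i,\alpha}}$ is left implicit in the paper but is exactly the right bookkeeping.
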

\begin{proof}
    Since $B$ is locally collared in $X$, there exists a relatively open cover $\calU = ( U_{\alpha} )_{\alpha \in \kappa}$ of $B$ and a collection of local collars 
    \[
        c_\alpha : \overline{U_\alpha} \times [0,1] \to X, \quad \alpha \in \kappa. 
    \]
    Since $B$ is paracompact in $X$,
    there exists a locally finite cover $\calU = \bigcup_{i\in\bbN} \calU_{i}$,
    where $\calU_{i} = ( U_{i,\alpha} )_{\alpha \in \kappa}$ is a relatively open family such that
    for every $i \in \bbN$ and $\alpha \in \kappa$ we have the inclusion $U_{i,\alpha} \subseteq U_{\alpha}$. 
We set $U_{i} := \cup_{\alpha \in \kappa} U_{i,\alpha}$.
    We also notice that $\overline{ U_{i} } = \cup_{\alpha\in\kappa} \overline{U_{i,\alpha}}$. 
    By Lemma~\ref{lemma:collaringdiscrete}, there exist local collars 
    \begin{align*}
        c_{i} : \overline{U_{i}} \times [0,1] \to X.
    \end{align*}
    Moreover, the family $(U_{i})_{i \in \bbN}$ is locally finite and countable. 
    Using Theorem~\ref{theorem:collaring:locallyfinitecountable} and that $B$ is normal, 
    we conclude that $B$ is collared in $X$.
\end{proof}

\begin{corollary}\label{corollary:collaringwithinparacompactspaces}
    If $X$ is a paracompact Hausdorff space and $B \subseteq X$ is a locally collared closed subspace, then $B$ is collared in $X$.
\end{corollary}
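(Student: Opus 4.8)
The plan is to reduce this statement to Theorem~\ref{theorem:collaring:paracompactinambient}, which already handles the case of a locally collared closed subset that is paracompact \emph{in} the ambient Hausdorff space. So the only gap to fill is the passage from ``$X$ is paracompact'' to ``$B$ is paracompact in $X$''. This is precisely the content of the first item of Lemma~\ref{lemma:paracompactnessofsubsets}: a closed subset of a (strongly) paracompact space is (strongly) paracompact in that space. Hence the corollary follows by concatenation.

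Concretely, first I would note that $X$ is Hausdorff and $B \subseteq X$ is closed, so Lemma~\ref{lemma:paracompactnessofsubsets}(1) applies and yields that $B$ is paracompact in $X$. Next, by hypothesis $B$ is locally collared in $X$. Therefore $B$ is a closed subset of the Hausdorff space $X$ that is both locally collared and paracompact in $X$, which is exactly the hypothesis of Theorem~\ref{theorem:collaring:paracompactinambient}. Invoking that theorem gives that $B$ is collared in $X$, completing the argument.

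I expect there to be no genuine obstacle here: all the work has already been done in Sections~\ref{section:covers}--\ref{section:topological}. The one point worth stating explicitly, rather than leaving implicit, is \emph{why} closedness of $B$ is needed in the reduction step (it is used in Lemma~\ref{lemma:paracompactnessofsubsets} to pass from a cover of $B$ by sets open in $X$ to an open cover of all of $X$ by adjoining $X \setminus B$), since that is the only place the closedness hypothesis of the corollary is consumed beyond what Theorem~\ref{theorem:collaring:paracompactinambient} already requires. Accordingly, the write-up should be just two or three sentences chaining these two results together.
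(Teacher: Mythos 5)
Your proposal is correct and is exactly the argument the paper intends: apply Lemma~\ref{lemma:paracompactnessofsubsets}(1) to see that the closed set $B$ is paracompact in the paracompact Hausdorff space $X$, then invoke Theorem~\ref{theorem:collaring:paracompactinambient}. The remark about where closedness is consumed is a nice touch but nothing more needs to be said.
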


\begin{remark}
    Brown's original proof relies on Michael's theorem 
    and verifies its three conditions: 
    (i) subsets of collared sets are collared 
    (ii) disjoint unions of collared sets are collared, and 
    (iii) finite unions of collared sets are collared. 
    The first condition is obviously true. 
    Our Lemma~\ref{lemma:collaringdiscrete} proves a weaker version of the second condition, 
    which says that discrete unions of collared sets are collared,
    whereas Theorem~\ref{theorem:collaring:locallyfinitecountable} proves a stronger version of the third condition,
    which says that countable locally finite unions of collared sets are collared. 
\end{remark}

\begin{remark}\label{remark:stronglyparacompactsituation}
    In Connelly's contribution~\cite{connelly1971new} and subsequent works~\cite{gauld2014non,baillif2022collared},
    one glues the space $B \times [-1,0]$ along $B \subseteq X$ and constructs the collar 
    by locally pulling $X$ \emph{down} into that extended space. 
By contrast, our approach is visualized best as pushing the collared set $B$ up into $X$,
    locally and step by step. Each push deforms not only the local portion of $B$ (or rather its image after the preceding pushes), but also to all surrounding local collars. 
    The trajectory of each $x \in B$ finalizes after a finite number of operations and defines the global collar. 
\end{remark}

\section{The strongly paracompact case}\label{section:stronglyparacompact}
We address an interesting special case: 
we prove the collar theorem for locally collared sets that are strongly paracompact within their Hausdorff ambient space. 
Proofs for this special case can be found in the monograph by Gauld~\cite[Section~3]{gauld2014non} and a contribution by Baillif~\cite{baillif2022collared}.
Even though this is a special case of the more general result in the previous section, 
this manuscript provides a separate proof to complete the exposition and to contribute a new perspective on a known theorem. 

The central statement in this section is the following generalization of Theorem~\ref{theorem:collaring:locallyfinitecountable};
more context and discussion will follow later.

\begin{theorem}\label{theorem:collaring:locallyfinitegeneralized}
    Let $B$ be a normal subset of some Hausdorff space $X$.
    Let 
    \begin{align*}
        \calC = \left( c_{\alpha} : \overline{U_{\alpha}} \times [0,1] \to X \right)_{\alpha \in \kappa} 
    \end{align*}
    be a collar cover, indexed over a set $\kappa$, 
    such that $\calU = \left\{ U_\alpha \suchthat* \alpha \in \kappa \right\}$ is a locally finite open cover of $B$.
    
    Suppose $\kappa$ is well-ordered and that every $x \in B$ has a relatively open neighborhood $S_{x} \subseteq B$
    such that for each $\alpha \in \kappa$ there are only finitely-many $\beta \in \kappa$ with $\beta < \alpha$ and 
    \begin{align*}
        c_{\alpha}( (S_{x} \cap \overline{U_{\alpha}}) \times [0,1] )
        \cap 
        c_{\beta} ( \overline{U_{\beta}} \times [0,1] )
        \neq 
        \emptyset
        .
    \end{align*}
    Then $B$ is collared.
\end{theorem}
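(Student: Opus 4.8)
The plan is to mimic the recursive construction from the proof of Theorem~\ref{theorem:collaring:locallyfinitecountable}, but to organize the transfinite induction over the well-ordered index set $\kappa$ rather than over $\bbN$. The key structural observation making this possible is the local finiteness hypothesis together with the new ``finitely many lower-indexed overlaps'' condition: although $\kappa$ may be uncountable, every point $x \in B$ is only affected by finitely many of the local collars (because $\calU$ is locally finite), and each such collar $c_\alpha$ only interacts on its $S_x$-portion with finitely many collars of strictly smaller index. So the construction, while indexed over $\kappa$, is locally finite in a strong sense and each point's trajectory stabilizes after finitely many steps.

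First I would set up, as before, a partition of unity $(\lambda_\alpha)_{\alpha \in \kappa}$ subordinate to $\calU$, form the ``tent'' functions $Q_{\alpha,\sharp}$ and $O_{\alpha,\sharp}$, the embeddings $\Xi_\alpha$, and the self-embeddings $g_\alpha : X \to X$ supported in $W_\alpha = c_\alpha(U_\alpha \times [0,1))$ exactly as in steps 1--4 of the earlier proof; all of those are purely local and need no change. The composition $G_i = g_1 \circ \dots \circ g_i$ must now be replaced by a transfinite composition $G_\alpha$ indexed over $\kappa$, and this is the step where I would need to be careful: for a point $y \in X$, only finitely many $g_\beta$ move $y$ (by local finiteness of $\calU$ pulled through the collars, or rather because we only ever evaluate $G_\alpha$ at images of points of $B$, and each $x \in B$ lies in finitely many $\overline{U_\beta}$), so $G_\alpha(y)$ is well-defined as a finite composition regardless of the order type of $\{\beta : \beta \le \alpha\}$. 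I would make this precise by noting that $G_\alpha(y) = g_{\beta_1} \circ \dots \circ g_{\beta_k}(y)$ where $\beta_1 < \dots < \beta_k$ are the indices $\le \alpha$ with $y$ (or its iterated image) in the relevant $W_{\beta_j}$, and that composition order matters only up to the disjointness/support structure, which is where the hypothesis on finitely many lower overlaps of $c_\alpha((S_x \cap \overline{U_\alpha}) \times [0,1])$ with $c_\beta(\overline{U_\beta} \times [0,1])$ enters to guarantee the deformations commute appropriately near each point.

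Then I would define partial collars $h_\alpha$ on $Q_{\Sigma,\alpha} := \{(x,t) : t \le \sum_{\beta \le \alpha} \lambda_\beta(x)\}$ by transfinite recursion: $h_\alpha(x,t) = G_{<\alpha} \circ c_\alpha(x, \tfrac12(t - \lambda_{\Sigma,<\alpha}(x)))$ on the slab between the $(<\alpha)$-tent and the $\alpha$-tent, and $h_\alpha = h_{<\alpha}$ below it, where $\lambda_{\Sigma,<\alpha}(x) = \sum_{\beta < \alpha}\lambda_\beta(x)$ (a finite sum) and $G_{<\alpha}$ is the transfinite composition of the $g_\beta$ with $\beta < \alpha$. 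At limit ordinals the definition is forced by coherence: for each $x$, the slabs stabilize after finitely many steps, so $h_\alpha(x,\cdot)$ is eventually constant in $\alpha$ and the limit mapping is unambiguous. The verification of continuity (pasting lemma along the closed sets $Q_{\Sigma,<\alpha}$ and $Q_{\Sigma,\alpha} \setminus O_{\Sigma,<\alpha}$), injectivity, openness of image, and the open-image property of $h(B \times [0,1))$ then proceeds exactly as in steps 6--10 and 12--19 of Theorem~\ref{theorem:collaring:locallyfinitecountable}, with ``$i(x)$'' replaced by the finite maximum $\alpha(x) := \max\{\alpha : x \in \overline{U_\alpha}\}$ (finite by local finiteness) and the recursive arguments run as transfinite inductions whose ``only finitely many relevant steps near $x$'' property makes every closure/openness claim local and hence reducible to the finite case.

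The main obstacle I expect is making the transfinite composition $G_\alpha$ rigorously well-defined and well-behaved across limit ordinals — in particular checking that ``everything is locally finite'' is not merely a slogan but actually licenses: (i) $G_\alpha$ being a genuine embedding of $X$ (a locally finite composition of embeddings, which one justifies via the open cover of $X$ by sets meeting only finitely many $W_\beta$, using Theorem~\ref{theorem:yinuowangyoumademesmile} to see the relevant supports are closed), and (ii) the recursive ``maps relatively open onto relatively open'' and ``$h_\alpha(O_{\Sigma,\alpha})$ open'' arguments surviving the passage through limit stages, where one cannot simply ``take the previous step'' but must invoke coherence of the finitely many locally relevant stages. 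The extra hypothesis about finitely many lower-indexed overlaps is exactly what is needed to control the interaction of $c_\alpha$ with the already-constructed portion $G_{<\alpha}$, ensuring the deformation $g_\alpha$ does not disturb the image of $h_{<\alpha}$ — this replaces the automatic ``only finitely many predecessors'' feature of $\bbN$ that the earlier proof relied on implicitly.
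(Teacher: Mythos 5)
Your overall architecture is the right one and matches the paper's in spirit: partition of unity, the tent maps $\Xi_\alpha$, the push maps $g_\alpha$, slab-by-slab definition of partial collars, and the observation that near each point only finitely many indices matter. However, the central device you propose --- a transfinite composition $G_{<\alpha}$ of the $g_\beta$ and a transfinite recursion for $h_\alpha$ --- has a genuine gap as stated. In the finite construction, $G_{i} = g_1 \circ \cdots \circ g_{i}$ is evaluated with the \emph{largest} index applied first; generalizing this to all $\beta < \alpha$ means composing in the \emph{reverse} of the well-order, which is ill-founded at limit ordinals: there is no first map to apply, so the recursion $G_{\le\alpha} = G_{<\alpha}\circ g_\alpha$ cannot be closed off at limits in the naive way. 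Your proposed escape --- that only finitely many $g_\beta$ move the point ``or its iterated image,'' and that the maps ``commute appropriately near each point'' --- does not work as written: identifying which $g_\beta$ move the iterated image presupposes that the composition is already defined (the trajectory can leave $c_\alpha(\overline{U_\alpha}\times[0,1])$ and enter collar images not among the finitely many overlaps guaranteed by the hypothesis), and the $g_\beta$ emphatically do \emph{not} commute --- the entire construction depends on the order of application, which is why $\kappa$ is well-ordered in the first place. (A repair is possible: along the trajectory the ``next relevant index'' strictly decreases, and a strictly decreasing sequence in a well-ordered set is finite; but you neither state nor use this, and it still only gives pointwise well-definedness, not continuity or the embedding properties.)

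The paper avoids the transfinite composition entirely. It uses the overlap hypothesis to attach to each $x \in B$ a \emph{finite} index set $\kappa'(x)$ and a neighborhood $V_x$, runs the finite construction of Theorem~\ref{theorem:collaring:locallyfinitecountable} verbatim over the finitely many indices $\alpha_1 < \cdots < \alpha_N$ of any finite $\mu \supseteq \kappa'(y)$ ($y \in V$), obtaining local maps $h_{V,\mu}$, and then proves a consistency lemma: enlarging $\mu$ or shrinking $V$ does not change $h_{V,\mu}(x,t)$, because the extra $g_\beta$ act as the identity on the relevant sets. The global collar is then \emph{defined} as $h(x,t) := h_{V_x,\kappa'(x)}(x,t)$, and all of continuity, injectivity, openness of images, and the local homeomorphism property reduce to the finite case on each $V_x \times [0,1]$. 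So the missing ingredient in your proposal is precisely this localization-plus-coherence step: you need to define the finite set of relevant indices \emph{a priori} from the hypothesis (not from the trajectory), build the map locally from a finite composition only, and verify independence of the choices --- rather than attempting a global transfinite composition whose well-definedness is the very thing in question.
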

\begin{proof} 
    We organize the proof into several steps. 
    \begin{enumerate}[1., wide=10pt, itemindent=\parindent, leftmargin=0pt, topsep=0pt, itemsep=0pt]
    \item 
    Since $B$ is normal and $\calU$ is locally finite, 
    there exists a continuous partition of unity $( \lambda_{\alpha} : B \rightarrow \bbR )_{\alpha \in \kappa}$ subordinate to the cover $\calU$. 
    Each $\lambda_{\alpha} : B \to [0,1]$ is non-zero over an open set $V_\alpha$ satisfying $\overline{V_{\alpha}} \subseteq U_{\alpha}$.
    Notice that $\calV = \left\{ V_\alpha \suchthat* \alpha \in \kappa \right\}$ is a locally finite open cover of $B$.
    
    \item 
    Given any subset $V \subseteq B$ and $\alpha \in \kappa$, 
    we define the two auxiliary sets 
    \begin{align*}
Q_{V,\alpha,\mytent} &:= \left\{ (x,t) \in V \times [0,1] \suchthat* t \leq \frac 1 2 \lambda_{\alpha}(x) \right\}, 
        \\
        O_{V,\alpha,\mytent} &:= \left\{ (x,t) \in V \times [0,1] \suchthat* t <    \frac 1 2 \lambda_{\alpha}(x) \right\}.
    \end{align*}
    Note that $Q_{V,\alpha,\mytent}$ is closed and $O_{V,\alpha,\mytent}$ is open in $V \times [0,1]$, respectively.

    \item For each $\alpha \in \kappa$, we introduce the homeomorphism 
    \begin{align*} 
        \Xi_{\alpha} : \overline{U_{\alpha}} \times [0,1] \to \overline{U_{\alpha}} \times [0,1]
    \end{align*}
    defined by 
    \begin{align*} 
        \Xi_{\alpha}(x,t)
        = 
        \begin{cases} 
            \left( x, \frac {\lambda_{\alpha}(x)} 2 + \left( \frac 3 4 - \frac {\lambda_{\alpha}(x)} 2 \right) \frac 4 3 t \right)
            & \text{if } t <    \frac 3 4,
            \\
            (x,t)
            & \text{if } t \geq \frac 3 4.
        \end{cases}
    \end{align*}
    Note that $\Xi_{\alpha}$ is an embedding. 
    Its image $( \overline{U_{\alpha}} \times [0,1] ) \setminus O_{\overline{U_{\alpha}},\alpha,\mytent}$ is closed in $\overline{U_{\alpha}} \times [0,1]$.

    \item 
    Following up on this, we introduce the transformations 
    \begin{align*}
        g_{\alpha} : X \to X,
        \quad 
        y \mapsto 
        \begin{cases} 
            c_{\alpha} \circ \Xi_{\alpha} \circ c^{-1}_{\alpha}(y)
            & \text{if } y \in W_{\alpha},
            \\
            y
            & \text{if } y \notin W_{\alpha}.
        \end{cases}
    \end{align*}
    By construction, $g_{\alpha|W_{\alpha}}$ is an embedding of $W_{\alpha}$ into itself.
    
    We know by Theorem~\ref{theorem:yinuowangyoumademesmile} that $c_{\alpha}( \supp(\lambda_{\alpha}) \times [0,0.75] )$ is closed in $X$.
    Hence its complement and the open set $c_{\alpha}( U_{\alpha} \times [0,1)$ are two open sets that cover $X$ and over each of which $g_{\alpha}$
    is a homeomorphism onto its image. It is now clear that $g_{\alpha}$ is an embedding of $X$ into itself $X$.
    
    We understand that $g_{\alpha}$ has closed image in $X$
    because its complement in $X$ is the set $c_{\alpha}(O_{\overline{U_{\alpha}},\alpha,\mytent})$,
    which is open in $X$.

    \item Since $\calU$ is locally finite, Lemma~\ref{lemma:closurecondition} is applicable:
    for every $x \in B$ there exists a finite maximal set $\kappa(x) \subseteq \kappa$
    such that every open neighborhood of $x$ intersects $V_{\alpha}$ for all $\alpha \in \kappa(x)$. 
    By definition, $x \in \overline{V_{\alpha}}$ for each $\alpha \in \kappa(x)$.
    We conclude that $x \in U_{\alpha}$ for each $\alpha \in \kappa(x)$.     
    We fix an open neighborhood of $x$: 
    \[
        V_{x,\kappa} 
        := 
\bigcap\left\{ X \setminus \overline{V_{\beta}} \suchthat* \beta \in \kappa \setminus \kappa(x) \right\}
        .
    \] 
    By construction, $V_{x,\kappa}$ intersects $V_{\beta} \in \calV$ if and only if $\beta \in \kappa(x)$. 
    In particular, $\kappa(y) \subseteq \kappa(x)$ for all $y \in V_{x,\kappa}$. 
    
    Now let $S_{x}$ be the neighborhood as in the statement of the theorem.
    We set 
    \[
        V_{x} := S_{x} \cap V_{x,\kappa}. 
    \]
    We define $\kappa'(x) \subseteq \kappa$ as follows:
    $\beta \in \kappa'(x)$ if and only if there exists $\alpha \in \kappa(x)$
    with $\beta \leq \alpha$ and 
    \begin{align*}
        c_{\alpha}( (V_{x} \cap U_{\alpha}) \times [0,1] )
        \cap 
        c_{\beta} ( U_{\beta} \times [0,1] )
        \neq 
        \emptyset
        .
    \end{align*}
    By assumption, $\kappa'(x)$ is finite. Note that $\kappa'(y) \subseteq \kappa'(x)$ for all $y \in V_{x}$. 
    
    We study a family of local mappings that we eventually paste together to get the global collar.
    For the time being, we assume that $V \subseteq B$ be an open set and that $\mu \subseteq \kappa$ is a finite set of indices 
    $\alpha_1 < \dots < \alpha_N$ such that $\kappa'(y) \subseteq \mu$ for each $y \in V$.
    A possible choice, to be used later, is $V = V_x$ and $\mu = \kappa'(x)$ for some arbitrary $x \in B$, in which case $N = |\kappa'(x)|$.
    
    We abbreviate 
    \[
        \lambda_{V,\mu,\Sigma,i}(x) := \lambda_{\alpha_1}(x) + \lambda_{\alpha_2}(x) + \cdots + \lambda_{\alpha_i}(x), \quad 0 \leq i \leq N, \quad x \in V.
    \]
    We introduce two more auxiliary sets: 
    \begin{align*}
        Q_{V,\mu,\Sigma,i} &:= \left\{ (x,t) \in V \times [0,1] \suchthat* t \leq \sum\nolimits_{j=1}^{i} \lambda_{\alpha_j}(x) \right\},
        \\
        O_{V,\mu,\Sigma,i} &:= \left\{ (x,t) \in V \times [0,1] \suchthat* t    < \sum\nolimits_{j=1}^{i} \lambda_{\alpha_j}(x) \right\}.
    \end{align*}
    Each $Q_{V,\mu,\Sigma,i}$ is closed and each $O_{V,\mu,\Sigma,i}$ is open in $V \times [0,1]$, respectively. 
    Note that $Q_{V,\mu,\Sigma,0} = V \times \{0\}$. 
    
    For notational convenience, we also introduce the homeomorphisms
    \begin{align*}
        q_{V,\mu,i}
        : 
        Q_{V,\mu,\Sigma,i} \setminus O_{V,\mu,\Sigma,i-1}
        \to 
        Q_{V,\alpha_i,\mytent},
        \quad 
        (x,t) \mapsto \left( x, ( t - \lambda_{V,\mu,\Sigma,i-1}(x) ) / 2 \right)
        .
    \end{align*}
    In what follows, we write 
    \begin{align*}
        G_{V,\mu,i} := g_{\alpha_{1}} \circ g_{\alpha_{2}} \circ \cdots \circ g_{\alpha_{i}}.
    \end{align*}
    We define the mapping $h_{V,\mu,1} : Q_{V,\mu,\Sigma,1} \to X$ via 
    \begin{align*}
        h_{V,\mu,1}(x,t)
        = 
        \begin{cases} 
            c_{\alpha_1}(x,t/2) 
            & \text{ if } (x,t) \in Q_{V,\mu,\Sigma,1  } \setminus Q_{V,\mu,\Sigma,0}
            ,
            \\
            (x,t) 
            & \text{ if } (x,t) \in Q_{V,\mu,\Sigma,0}
            .
        \end{cases}
    \end{align*}
    For $i > 1$, we recursively define the mapping $h_{V,\mu,i} : Q_{V,\mu,i} \to X$ by 
    {\small
    \begin{align*}
        h_{V,\mu,i}
        (x,t) 
        = 
        \begin{cases} 
            h_{V,\mu,i-1}(x,t) 
            & \text{ if } (x,t) \in Q_{V,\mu,\Sigma,i-1},
            \\
            G_{V,\mu,i-1} \circ c_{\alpha_{i}} \circ q_{V,\mu,i}(x,t) 
            & \text{ if } (x,t) \in Q_{V,\mu,\Sigma,i  } \setminus Q_{V,\mu,\Sigma,i-1}.
        \end{cases}
    \end{align*}
    }

    \item 
    First, $h_{V,\mu,1}$ is continuous, due to the pasting lemma, 
    and \[h_{V,\mu,1}( x, \lambda_{\alpha_1}(x) ) = g_{\alpha_1}(x) = G_{V,\mu,1}(x)\] for all $x \in B$. 
    Next, let $i > 1$.
    If \[G_{V,\mu,i-1}(x) = h_{V,\mu,i-1}( x, \lambda_{V,\mu,\Sigma,i-1}(x) )\] for every $x \in V$,
    then 
    \begin{align*}
        G_{V,\mu,i}( x )
        &=
        G_{V,\mu,i-1} \circ g_{\alpha_i}( x )
        \\&=
        G_{V,\mu,i-1} \circ c_{\alpha_i}( x, \lambda_{\alpha_i}(x) / 2 )
        =
        h_{V,\mu,i}( x, \lambda_{V,\mu,\Sigma,i}(x) )
    \end{align*}
    for all $x \in V$ again. 
    Going further, we notice that $h_i$ is defined via two distinct mappings into $X$,
    themselves defined on the sets $Q_{V,\mu,\Sigma,i-1}$ and $Q_{V,\mu,\Sigma,i} \setminus O_{V,\mu,\Sigma,i-1}$, respectively, 
    which are both closed in $V \times [0,1]$.
    They agree on the intersection of these sets: 
    \begin{align*}
        G_{V,\mu,i-1} \circ c_{\alpha_i}( x, 0 ) = G_{V,\mu,i-1}( x ) = h_{V,\mu,i-1}( x, \lambda_{V,\mu,\Sigma,i-1}(x) ), 
        \quad 
        x \in V.
    \end{align*}
    If $h_{V,\mu,i-1}$ is continuous, then the pasting lemma implies that $h_{V,\mu,i}$ is continuous. 
    
    \item 
Furthermore, 
    $h_{V,\mu,i}$ is injective over $Q_{V,\mu,\Sigma,j} \setminus O_{V,\mu,\Sigma,j-1}$ when $1 \leq j \leq i$,
    since there it equals $G_{V,\mu,j-1} \circ c_{\alpha_j} \circ q_{V,\mu,j}$ by definition.
    However, when $j < l \leq i$, then the image of $h_{V,\mu,i}$ over $Q_{V,\mu,\Sigma,l} \setminus O_{V,\mu,\Sigma,l-1}$
    is a subset of the image of $G_{V,\mu,j-1} \circ g_{\alpha_j}$. 
    The image of $g_{\alpha_j}$ is disjoint from $c_{\alpha_j}(O_{V,\alpha_j,\mytent})$.
    As $G_{V,\mu,j-1}$ is an embedding, 
    we conclude that $h_{V,\mu,i}$ is injective.

    \item 
    Next, we show that $h_{V,\mu,i}$ maps open subsets of its domain onto relatively open subsets of its image. 
    First, we already know that $G_{V,\mu,i-1}^{-1} h_{V,\mu,i}$ equals the embedding $c_{\alpha_{i}} q_{V,\mu,i}$ over the set $Q_{V,\mu,\Sigma,i} \setminus O_{V,\mu,\Sigma,l}$.
    
    Let now $1 \leq l \leq i-1$ and assume that $G_{V,\mu,l}^{-1} h_{V,\mu,i}$
    restricted to $Q_{V,\mu,\Sigma,i} \setminus O_{V,\mu,\Sigma,l}$ maps relatively open subsets onto relatively open subsets of its image.
    Then the same is true for $g_{\alpha_{l}} G_{V,\mu,l}^{-1} h_{V,\mu,i}$.
    We also know that $c_{\alpha_{l}} q_{V,\mu,l}$ restricted to $Q_{V,\mu,\Sigma,l} \setminus O_{V,\mu,\Sigma,l-1}$
    maps relatively open subsets to relatively open subsets of its image.
    
    On the one hand, we recall that $g_{\alpha_{l}}(X)$ is closed in $X$.
    Intersecting $g_{\alpha_{l}}(X)$ and $G_{V,\mu,l-1}^{-1} h_{V,\mu,i}\left( Q_{V,\mu,\Sigma,i} \setminus O_{V,\mu,\Sigma,l-1} \right)$,
    we obtain a relatively closed subset of the latter 
    that equals $G_{V,\mu,l-1}^{-1} h_{V,\mu,i}\left( Q_{V,\mu,\Sigma,i} \setminus O_{V,\mu,\Sigma,l-1} \right)$.
    
    On the other hand, 
    due to Theorem~\ref{theorem:yinuowangyoumademesmile},
    we know that $c_{\alpha_{l}}( Q_{\overline{V_{\alpha_{l}}},\alpha_{l},\mytent} )$ is closed in $X$,
    and so is $c_{\alpha_{l}}( Q_{\overline{V_{\alpha_{l}}},\alpha_{l},\mytent} ) \cup B$,
    The intersection of $c_{\alpha_{l}}( Q_{\overline{V_{\alpha_{l}}},\alpha_{l},\mytent} ) \cup B$ with $G_{V,\mu,l-1}^{-1} h_{V,\mu,i}\left( Q_{V,\mu,\Sigma,i} \setminus O_{V,\mu,\Sigma,l-1} \right)$
    is a relatively closed subset of the latter 
    and equals $G_{V,\mu,l-1}^{-1} h_{V,\mu,i}\left( Q_{V,\mu,\Sigma,l} \setminus O_{V,\mu,\Sigma,l-1} \right)$.
    
    We notice that $g_{\alpha_{l}}(\overline{V_{\alpha_{l}}}) = g_{\alpha_{l}}(X) \cap c_{\alpha_{l}} ( Q_{ \overline{V_{\alpha_{l}}},{l},\mytent} )$ and observe 
    \begin{align*}
        &
        G_{V,\mu,l-1}^{-1} h_{V,\mu,i}\left( Q_{V,\mu,\Sigma,i} \setminus O_{V,\mu,\Sigma,l} \right)
        \cap 
        ( g_{\alpha_{l}}(\overline{V_{\alpha_{l}}}) \cup B )
        \\&\qquad 
        =
        G_{V,\mu,l-1}^{-1} h_{V,\mu,i}\left( Q_{V,\mu,\Sigma,l} \setminus O_{V,\mu,\Sigma,l-1} \right)
        \cap 
        ( g_{\alpha_{l}}(\overline{V_{\alpha_{l}}}) \cup B )
        .
    \end{align*}
    Together with Lemma~\ref{lemma:joiningrelativeopensets},
    we see that $G_{V,\mu,l-1}^{-1} h_{V,\mu,i}$ restricted to $Q_{V,\mu,\Sigma,i} \setminus O_{V,\mu,\Sigma,l}$
    maps relatively open subsets onto relatively open subsets of its image. 
    Iterating the recursive argument establishes that $h_{V,\mu,i}$ maps relatively open subsets of its domain 
    onto relatively open subsets of its image.

    \item 
    We show that $h_{V,\mu,i}( O_{V,\mu,\Sigma,i} )$ is open in $X$ by a similar recursive argument.
    The definition of $h_{V,\mu,i}$ clearly implies that the following set is open in $X$:
    \begin{align*}
        &
        G_{V,\mu,i-1}^{-1} 
        h_{V,\mu,i}\left( O_{V,\mu,\Sigma,i} \setminus O_{V,\mu,\Sigma,i-1} \right) 
        \\&\qquad
        = 
        c_{\alpha_{i}} q_{V,\mu,i}\left( O_{V,\mu,\Sigma,i} \setminus O_{V,\mu,\Sigma,i-1} \right)
= 
        c_{\alpha_{i}}\left( O_{ U_{\alpha_{i}} \cap V,{i},\mytent} \right)
        .
    \end{align*}
    Let now $1 \leq l \leq i-1$ and assume that 
    \begin{align*}
        X_{l}
        :=
        G_{V,\mu,l}^{-1} 
        h_{V,\mu,i}\left( O_{V,\mu,\Sigma,i} \setminus O_{V,\mu,\Sigma,l} \right) 
    \end{align*}
    is open in $X$. 
    On the one hand, $g_{\alpha_{l}}(X_l)$ is open in $g_{\alpha_{l}}(X)$, which is closed in $X$.
    On the other hand, 
    $c_{\alpha_{l}} ( Q_{ V \cap \overline{V_{\alpha_{l}}},{l},\mytent} )$ 
    is open in $c_{\alpha_{l}} ( Q_{ \overline{V_{\alpha_{l}}},{l},\mytent} )$, which is closed in $X$.
    
    We notice that $g_{\alpha_{l}}(\overline{V_{\alpha_{l}}}) = g_{\alpha_{l}}(X) \cap c_{\alpha_{l}} \left( Q_{ \overline{V_{\alpha_{l}}},{l},\mytent} \right)$ and observe 
    \begin{align*}
        c_{\alpha_{l}} \left( Q_{ V \cap \overline{V_{\alpha_{l}}},{l},\mytent} \right)
        \cap 
        g_{\alpha_{l}}(\overline{V_{\alpha_{l}}})
        =
        X_{l} 
        \cap 
        g_{\alpha_{l}}(\overline{V_{\alpha_{l}}}).
    \end{align*}
    We employ Lemma~\ref{lemma:joiningrelativeopensets} to conclude that 
    $G_{V,\mu,l-1}^{-1} h_{V,\mu,i}\left( O_{V,\mu,\Sigma,i} \setminus O_{V,\mu,\Sigma,l-1} \right)$ is open in $X$.
    By recursion, $h_{V,\mu,i}\left( O_{V,\mu,\Sigma,i} \setminus O_{V,\mu,\Sigma,0} \right) = h_{V,\mu,i}( O_{V,\mu,\Sigma,i} )$ is open in $X$.

    \item In summary, 
    $h_{V,\mu,i}$ is a continuous bijection onto its image which maps open sets onto sets relatively open within its image. 
    We conclude that $h_{V,\mu,i}$ is a homeomorphism onto its image.
    Moreover, $h_{V,\mu,i}\left( O_{V,\mu,\Sigma,i} \right)$ is open in $X$. 
    Recall that $N = |\mu|$. We see that 
    \[
        Q_{V,\mu,\Sigma,N} = V \times [0,1], \quad O_{V,\mu,\Sigma,N} = V \times [0,1).
    \]
    From here on, $h_{V,\mu} := h_{V,\mu,|\mu|}$. 
    
    \item Given $x \in V$ and $t \in [0,1]$, we verify that $h_{V,\mu}(x,t)$ does not depend on the choice of $\mu$ or the choice of $V$.
    Suppose that $V' \subseteq B$ is open and that $\mu' \subseteq \kappa$ is finite with $\kappa'(y) \subseteq \mu'$,
    and suppose that $V \subseteq V'$ and $\mu \subseteq \mu'$. 
    If $\beta \in \mu' \setminus \mu$, then $\lambda_{\beta}$ vanishes over $V$, 
    and $g_{\beta}$ acts as the identity on any $c_{\alpha}(V \cap U_{\alpha})$.
    Consequently, $h_{V,\mu,|\mu|}(x,t) = h_{V',\mu',|\mu'|}(x,t)$.
    We conclude that 
    \[
        h_{V,\mu}(x,t) = h_{V',\mu'}(x,t)
        .
    \]

    \item In combination, we define 
    \begin{align*}
        h : B \times [0,1] \to X, \quad (x,t) \mapsto h_{V_x,\kappa'(x)}(x,t).
    \end{align*}
    We notice that $h(x,0) = h_{V_x,\kappa'(x)}(x,0) = h_{V_x,\kappa'(x),1}(x,0) = x$ for $x \in B$. 
    It remains to be shown that $h$ is an embedding and that $h( B \times [0,1) )$ is open in $X$. 
    
    \item 
    We show that $h$ is injective. 
    Let $x, y \in B$ and $s, t \in [0,1]$, and let $\mu \subseteq \kappa$ be finite with $\kappa'(x) \cup \kappa'(y) \subseteq \mu$.
    As shown above, $h_{V_x \cup V_y,\mu}$ is injective. Now,
    \[
        h(x,s) = h_{V_x,\kappa'(x)}(x,s) = h_{V_x \cup V_y,\mu}(x,s)
        ,
        \quad 
        h(y,t) = h_{V_y,\kappa'(y)}(y,t) = h_{V_x \cup V_y,\mu}(y,t)
        ,
    \]
    and we conclude that $h$ is injective.

    \item We show that $h$ is a local homeomorphism onto its image.
    $( V_x \times [0,1] )_{x \in B}$ is an open cover of $B \times [0,1]$. 
    For any $x \in X$ and for all $(y,t) \in V_x \times [0,1]$ we establish 
    \begin{align*}
        h(y,t) 
        =
        h_{V_y,\kappa'(y)}(y,t) 
        = 
        h_{V_y \cap V_x,\kappa'(y)}(y,t) 
        = 
        h_{V_x,\kappa'(x)}(y,t)
        .
    \end{align*}
    Hence $h$ equals $h_{V_x,\kappa'(x)}$ over $V_x \times [0,1]$.
    Recall that $h_{V_x,\kappa'(x)}$ over $V_x \times [0,1]$ is an embedding.
    We conclude that $h$ is a local homeomorphism. 

    \item 
    So $h$ is injective and thus bijective onto its image. 
    Since $h$ is locally a homeomorphism, 
    we conclude that $h$ is a homeomorphism onto its image.

    \item We want to show that $h( B \times [0,1) )$ is open in $X$. 
    Obviously,
    \begin{align*}
        h\left( B \times [0,1) \right)
        =
        \bigcup_{x \in B}
        h\left( V_x \times [0,1) \right)
        =
        \bigcup_{x \in B}
        h_{V_x,\kappa'(x)}\left( V_x \times [0,1) \right)
    \end{align*}
    is open, being the union of open sets. 
    \end{enumerate}
    To summarize, $h$ is a collar of $B$ in $X$.
    This completes the proof. 
\end{proof}

\begin{theorem}\label{theorem:collaring:stronglyparacompact}
    Let $B \subseteq X$ be a closed set that is strongly paracompact in a Hausdorff space $X$.
    If $B$ is locally collared, then $B$ is collared.
\end{theorem}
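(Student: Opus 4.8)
The plan is to deduce the statement from Theorem~\ref{theorem:collaring:locallyfinitegeneralized} by upgrading the local collar data to a \emph{star-finite} collar cover and then choosing a well-order of its index set whose restriction to each connected piece of the intersection pattern has order type at most $\omega$. To set up, note that since $B$ is closed and strongly paracompact in $X$, Lemma~\ref{lemma:paracompactnessofsubsets} makes $B$ strongly paracompact; being a paracompact Hausdorff space, $B$ is normal, as Theorem~\ref{theorem:collaring:locallyfinitegeneralized} requires. Fix any collar cover $\left( c_{\alpha} : \overline{U^{0}_{\alpha}} \times [0,1] \to X \right)_{\alpha \in \kappa_{0}}$ of the locally collared set $B$. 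The sets $W_{\alpha} := c_{\alpha}( U^{0}_{\alpha} \times [0,1) )$ are open in $X$, satisfy $W_{\alpha} \cap B = U^{0}_{\alpha}$, and cover $B$; by strong paracompactness of $B$ in $X$ they admit a star-finite refinement $\calP = ( P_{j} )_{j \in J}$ by sets open in $X$ that still covers $B$, together with a map $j \mapsto \alpha(j)$ with $P_{j} \subseteq W_{\alpha(j)}$. Put $U_{j} := P_{j} \cap B$, a relatively open subset of $B$ contained in $U^{0}_{\alpha(j)}$, so that the restriction $c_{j} := c_{\alpha(j)}|_{\overline{U_{j}} \times [0,1]}$ is again a local collar of $B$ in $X$ with base $\overline{U_{j}}$ and associated set $U_{j}$; the family $( U_{j} )_{j\in J}$ is star-finite, hence locally finite, and covers $B$.

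Next I would carry out the cluster decomposition of $\calP$. Declare $j \approx k$ whenever $P_{j} \cap P_{k} \neq \emptyset$ and let $\sim$ be the equivalence relation it generates. Star-finiteness makes every vertex of the graph $(J,\approx)$ have finite degree, so each $\sim$-class is countable, and for distinct classes $E,E'$ the open sets $C_{E} := \bigcup_{i \in E} P_{i}$ and $C_{E'}$ are disjoint (a common point would $\approx$-link the classes). I claim $\overline{U_{\alpha}} \subseteq C_{[\alpha]}$ for every $\alpha \in J$: any $y \in \overline{U_{\alpha}}$ lies in $B$ (as $B$ is closed), hence in some $P_{j_{0}}$, and the open set $P_{j_{0}}$ meets $U_{\alpha} \subseteq P_{\alpha}$, so $j_{0} \approx \alpha$ and $y \in C_{[\alpha]}$. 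Consequently $A_{\alpha} := X \setminus C_{[\alpha]}$ is closed and disjoint from the paracompact closed Hausdorff set $\overline{U_{\alpha}}$, so Lemma~\ref{lemma:cutfunctionthatavoidsclosedset} furnishes a continuous $d_{\alpha} : \overline{U_{\alpha}} \to (0,1]$ with $c_{\alpha(\cdot)}(x,t) \notin A_{\alpha}$ for $t \in [0, d_{\alpha}(x)]$, and Lemma~\ref{lemma:continuouscut} turns $c_{j}$ into a local collar $\widetilde{c}_{\alpha}(x,t) := c_{j}(x, t\, d_{\alpha}(x))$ of $B$ in $X$ with the same base $\overline{U_{\alpha}}$ and associated set $U_{\alpha}$, whose entire image lies in $C_{[\alpha]}$. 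The key consequence is that $\widetilde{c}_{\alpha}( \overline{U_{\alpha}} \times [0,1] ) \cap \widetilde{c}_{\beta}( \overline{U_{\beta}} \times [0,1] ) \neq \emptyset$ forces $C_{[\alpha]} \cap C_{[\beta]} \neq \emptyset$, hence $[\alpha] = [\beta]$.

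Finally I would fix the well-order. Well-order the set of $\sim$-classes, equip each (countable) class with a well-order of order type at most $\omega$, and let $J$ carry the well-order obtained by concatenating these in the chosen order of the classes; then inside its own class each $\alpha \in J$ has only finitely many predecessors. Combining this with the key consequence, for every $\alpha \in J$ there are only finitely many $\beta < \alpha$ with $\widetilde{c}_{\alpha}( \overline{U_{\alpha}} \times [0,1] ) \cap \widetilde{c}_{\beta}( \overline{U_{\beta}} \times [0,1] ) \neq \emptyset$. Thus the collar cover $( \widetilde{c}_{\alpha} )_{\alpha \in J}$, whose associated cover $( U_{\alpha} )_{\alpha \in J}$ is a locally finite open cover of the normal space $B$, satisfies the hypotheses of Theorem~\ref{theorem:collaring:locallyfinitegeneralized} with $S_{x} := B$ for every $x \in B$; that theorem then produces a collar of $B$ in $X$. (Alternatively, since each $B \cap C_{E}$ is clopen in $B$ and carries a countable locally finite collar cover, one could collar each of them via Theorem~\ref{theorem:collaring:locallyfinitecountable} and assemble the result with Lemma~\ref{lemma:collaringdiscrete}.)

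The step I expect to be the main obstacle is confining the \emph{whole} image of each collar, parameter running up to $1$ and not just over a thin initial slab, inside its cluster $C_{[\alpha]}$: without this, overlapping collar images could chain arbitrarily many clusters together and wreck the finiteness condition of Theorem~\ref{theorem:collaring:locallyfinitegeneralized}. What makes the confinement work is precisely the inclusion $\overline{U_{\alpha}} \subseteq C_{[\alpha]}$ into an \emph{open} set, which in turn relies on $\calP$ being a cover of $B$ by sets open in the ambient $X$; star-finiteness then keeps the classes $[\alpha]$ countable, which is what permits a well-order of order type at most $\omega$ on each.
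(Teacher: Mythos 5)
Your proof is correct, but it takes a genuinely different route from the paper's. Both proofs start by passing to a star-finite refinement and shrinking the local collars via Corollary~\ref{corollary:restrictedcollarthatavoidsclosedset} (equivalently, Lemma~\ref{lemma:cutfunctionthatavoidsclosedset} followed by Lemma~\ref{lemma:continuouscut}); the difference is what closed set you avoid. The paper shrinks the $\gamma$-th collar so that its \emph{entire image} lies inside the individual star-finite refinement member $O_\gamma$; since $(O_\gamma)$ is star-finite, the family of collar images is then itself star-finite, so for each $\alpha$ only finitely many $\beta$ total (not just $\beta < \alpha$) can have images meeting it, and the finiteness hypothesis of Theorem~\ref{theorem:collaring:locallyfinitegeneralized} holds with $S_{x}=B$ for an \emph{arbitrary} well-order of the index set. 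You instead confine each image to the much larger cluster $C_{[\alpha]}$; that does not make the images star-finite (a cluster can contain infinitely many overlapping pieces), so you must pay for it with the extra step of constructing a well-order whose restriction to each countable cluster has order type at most $\omega$. That trade is sound, and you correctly identify confinement of the \emph{whole} image (not just a thin initial slab) as the crux. Your parenthetical alternative --- collar each clopen trace $B \cap C_{E}$ separately via Theorem~\ref{theorem:collaring:locallyfinitecountable} and then glue with Lemma~\ref{lemma:collaringdiscrete}, noting that $(B\cap C_{E})_{E}$ is a discrete family --- is also valid and arguably cleaner than either the clustered well-order or the paper's proof, since it bypasses Theorem~\ref{theorem:collaring:locallyfinitegeneralized} entirely; it is close in spirit to how the paper handles the general paracompact case.
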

\begin{proof}
    The set $B$ has a relatively open cover $\calU = \left( U_{\alpha} \right)_{\alpha \in \kappa}$,
    indexed over some set $\kappa$, 
    and we have family of collars
    \begin{align*}
        \calC = \left\{ c_\alpha : \overline{U_\alpha} \times [0,1) \to X \suchthat* \alpha \in \kappa \right\}.    
    \end{align*}
    We write 
    \[
        W_\alpha := c_\alpha( U_\alpha \times [0,1) ),
        \quad 
        \calW := \left\{ W_\alpha \suchthat* \alpha \in \kappa \right\}.
    \]
    Then $\calW$ is a cover of $B$ by open subsets of $X$. 
    First, $B$ is normal because it is paracompact and Hausdorff. 
    We now complete the proof in two steps.
    \begin{enumerate}[1., wide=10pt, itemindent=\parindent, leftmargin=0pt, topsep=0pt, itemsep=0pt]
    \item 
    Since $B$ is strongly paracompact in $X$, 
    the family $\calW$ has a star-finite open refinement 
    $\calO = \left( O_\gamma \right)_{ \gamma \in \mu }$,
    indexed over another set $\mu$. 
    Note that $\left( B \cap O_\gamma \right)_{ \gamma \in \mu }$ is a star-finite open refinement of $\calU$. 
    As $B$ is normal, $B$ has a relatively open cover 
    $\calV = \left( V_\gamma \right)_{ \gamma \in \mu }$ such that 
$\overline{ V_\gamma } \subseteq O_\gamma \cap B$ for each $\gamma \in \mu$.
We understand that $\calV$ is a refinement of $\calU$.
    
    Now, for each $\gamma \in \mu$, 
    we define a local collar $h_\gamma : \overline{ V_\gamma } \times [0,1] \to X$ as follows. 
    First, we fix some $\alpha \in \kappa$ with $V_{\gamma} \subseteq U_{\alpha}$.
    Second, we restrict $c_{\alpha}$ to $\overline{V_{\gamma}} \times [0,1]$.
    Third, we use Corollary~\ref{corollary:restrictedcollarthatavoidsclosedset} on that restriction 
    to ensure that $h_\gamma$ has image within $O_\gamma$.
    It follows that $(h_\gamma)_{\gamma \in \mu}$ is a collar cover of $B$ in $X$
    and the images of those collars form a star-finite collection.
    
    \item 
    So we assume without loss of generality that the images of the collars in $\calC$
    constitute a star-finite collection. 
    In particular, $\calU$ is locally finite then. 
    By the axiom of choice, $\kappa$ can be assumed well-ordered. 
    
    We want to verify the conditions of Theorem~\ref{theorem:collaring:locallyfinitegeneralized}.
    Clearly, $S_{x} := B$ is a relatively open neighborhood of each $x \in B$,
    and since $\calW$ is star-finite, we verify that 
    for each $\alpha \in \kappa$ there are only finitely-many $\beta \in \kappa$ and 
    \begin{align*}
        c_{\alpha}( (S_{x} \cap U_{\alpha}) \times [0,1] )
        \cap 
        W_{\beta}
        = 
        W_{\alpha}
        \cap 
        W_{\beta}
        \neq 
        \emptyset
        .
    \end{align*}
\end{enumerate}
    The conclusion is that $B$ is collared, as had to be shown.
\end{proof}

\begin{remark}
    Theorem~\ref{theorem:collaring:locallyfinitegeneralized} is a generalization of Theorem~\ref{theorem:collaring:locallyfinitecountable}:
    if the collar cover is countable, then we can choose $S_{x} = B$ to satisfy the condition. 
    The utility of this generalization is that it bypasses the machinery for Stone's result (Proposition~\ref{proposition:paracompactimpliessigmadiscrete}) and enables to collar theorem 
    when the closed set is locally collared and strongly paracompact within its Hausdorff ambient space. 
    Nevertheless, we emphasize that with Stone's result available, 
    the as-such weaker result Theorem~\ref{theorem:collaring:locallyfinitecountable} already suffices to prove Theorem~\ref{theorem:collaring:stronglyparacompact}. 
    Therefore, Theorem~\ref{theorem:collaring:locallyfinitegeneralized} is only of expositional interest. 
\end{remark}

\begin{remark} 
    If the collar family $\calC$ in Theorem~\ref{theorem:collaring:locallyfinitegeneralized} is already such that 
    the family of images $\calW$ is star-finite, 
    then the sets $\kappa'(x)$ can be defined alternatively as follows. 
    For every $\alpha \in \kappa$ there exists a finite set $\kappa(\alpha)$
    containing exactly those $\beta \in \kappa$ for which $W_{\alpha} \cup W_\beta \neq \emptyset$.
    We can then set 
$\kappa'(x) = \cup_{ \alpha \in \kappa(x)} \kappa(\alpha)$.
One may choose $S_{x} = B$.
    The remainder of the proof of Theorem~\ref{theorem:collaring:locallyfinitegeneralized} remains unaffected. 
\end{remark}

\section{Bicollar constructions}\label{section:bicollar}

The notion of bicollar is closely related to the notion of collar. Bicollars are also called two-sided collars.
Here, the coordinate does not range over the interval $[0,1]$ but the interval $[-1,1]$. 
Generally speaking, definitions related to collars have analogues for bicollars. 
However, bicollars are most easily constructed by pasting two one-sided collars, 
as this section reviews in more detail. 

A \emph{local bicollar} of $B$ in $X$ is an embedding $c : \overline{U} \times [-1,1] \to X$,
where $U$ is a relatively open subset of $B$,
such that $c(x,0) = x$ for all $x \in \overline{U}$,
such that $c^{-1}(B) = \overline{U} \times \{0\}$,
and such that $c( U \times (-1,1) )$ is open in $X$.
We call $\overline{U}$ the \emph{base} of the local bicollar. 

A local bicollar $c : B \times [-1,1] \to X$ is called a \emph{bicollar} of $B$.
We call $B$ \emph{bicollared} if it has a bicollar.
We say that $B$ is \emph{locally bicollared} in $X$
if every point $x \in B$ has got a relatively open neighborhood $U \subseteq B$
such that there exists a local bicollar $c : \overline{U} \times [-1,1] \to X$ of $B$ in $X$.

A local bicollar is called \emph{strong} if it is a closed embedding, that is, its image is closed in $X$.
Analogously, we define the notions of \emph{strong bicollar} of $B$ in $X$,
as well as the notions of \emph{strongly bicollared} and \emph{locally strongly bicollared} in $X$.

\begin{remark}\label{remark:simplifiedbicollardefinition}
    As with one-sided collars, the definition of bicollar can be simplified.
    Suppose that $B$ is a closed subset of $X$ and that $c : B \times [-1,1] \to X$ is an embedding 
    such that $c(x,0) = x$ for all $x \in B$ and such that $c( B \times (-1,1) )$ is open in $X$.
    Then the condition $c^{-1}(B) = B \times \{0\}$ already holds because $B$ is an embedding,
    and thus $c$ is a bicollar of $B$ in $X$.
\end{remark}

The following concept is fundamental in the discussion of bicollars. 
We call $B$ \emph{two-sided} if $B$ has an open neighborhood $S$ open in $X$
such that for every connected component $O$ of $S$, 
the set $O \setminus B$ has two components, which we call $O^{+}$ and $O^{-}$.
We then say that $B$ is \emph{two-sided in $S$}.
We define 
\begin{align*}
    S^{+} = \bigcup O^{+}, \quad S^{-} = \bigcup O^{-},
\end{align*}
where the unions run over the connected components $O$ of $S$.
This defines a (generally not unique) disjoint partition $S = S^{+} \cup B \cup S^{-}$,
and any such choice of disjoint partition is called a \emph{$B$-separation of $S$}.
We also write 
\[
    S_{0}^{+} = B \cup S^{+}, \quad S_{0}^{-} = B \cup S^{-}.
\]
We remark that the space $X \setminus B$ might still be connected. 

\begin{lemma}\label{lemma:twosidedlocallybicollaredimplieslocallycollared}
    Let $B$ be a closed subset of a topological space $X$ 
    that is two-sided in an open neighborhood $S$. 
    If $B$ is locally (strongly) bicollared, then $B$ is closed and locally (strongly) collared in $B \cup S^{+}$ and $B \cup S^{-}$.
\end{lemma}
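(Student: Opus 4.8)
The plan is to treat the neighbourhood $B \cup S^{+}$; the case of $B \cup S^{-}$ is entirely symmetric, and the ``strong'' variants follow by the same argument. Write $Y := B \cup S^{+}$. Since $B$ is closed in $X$ it is closed in the subspace $Y$, and $S^{+} = Y \setminus B$ is relatively open in $Y$; this settles the ``closed'' assertion. The substance is local collaring: at each point of $B$ I would take a local (strong) bicollar, keep the ``half'' of it that points into $S^{+}$, reflect the parameter if necessary, and rescale to obtain a local (strong) collar of $B$ in $Y$.

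Fix $x \in B$ and a local (strong) bicollar $c : \overline{U} \times [-1,1] \to X$ with $x \in U$. Recall that $c$ restricts to a homeomorphism of $U \times (-1,1)$ onto the open set $c(U \times (-1,1)) \subseteq X$, so this restriction is an open map into $X$. Because $S$ is open with $\overline{U} \subseteq B \subseteq S$, the set $c^{-1}(S)$ is open in $\overline{U} \times [-1,1]$ and contains $(x,0)$; hence there are a relatively open $U_{1} \subseteq B$ with $x \in U_{1} \subseteq U$ and a number $\delta \in (0,1]$ with $c(U_{1} \times (-\delta,\delta)) \subseteq S$. The sets $W^{+} := c(U_{1} \times (0,\delta))$ and $W^{-} := c(U_{1} \times (-\delta,0))$ are disjoint, open in $X$, and together exhaust $c(U_{1} \times (-\delta,\delta)) \setminus B \subseteq S \setminus B = S^{+} \sqcup S^{-}$. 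For each $y \in U_{1}$ the positive ray $c(\{y\} \times (0,\delta))$ is connected and lies in $S \setminus B$, hence wholly in $S^{+}$ or wholly in $S^{-}$; this assigns a sign $\sigma(y) \in \{+,-\}$, and the negative ray is treated likewise.

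The heart of the argument — and, I expect, the main obstacle — is to show that $\sigma$ is constant on a neighbourhood of $x$ and that there the positive and negative rays land on opposite sides. This rests on the fact that $S^{+}$ and $S^{-}$ do not interleave near $B$, so that the partition $W^{+} = (W^{+} \cap S^{+}) \sqcup (W^{+} \cap S^{-})$ is by sets relatively clopen in the open set $W^{+}$; the delicate point is that a component of $S$ need not be open in $X$, so one must argue that only finitely many of them matter near $x$ and that the two sides match up continuously along the base. Granting this, after shrinking $U_{1}$ to a smaller relatively open neighbourhood of $x$ and, if necessary, replacing $c(y,t)$ by $c(y,-t)$, I may assume $c(U_{1} \times (0,\delta)) \subseteq S^{+}$ and $c(U_{1} \times (-\delta,0)) \subseteq S^{-}$; in particular $c(U_{1} \times [0,\delta)) \subseteq Y$ whereas $c(U_{1} \times (-\delta,0))$ is disjoint from $Y$.

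It remains to rescale the positive half into an honest collar. Restricting $c$ to $\overline{U_{1}} \times [0,1]$ and rescaling as in Lemma~\ref{lemma:continuouscut} — composing with a map $(y,t) \mapsto c(y, t\,g(y))$ for a suitable continuous $g : \overline{U_{1}} \to (0,\delta)$, chosen small enough that the whole image stays inside $W$ — yields $c' : \overline{U_{1}} \times [0,1] \to Y$ with $c'(y,0) = y$ for all $y$; the identity $(c')^{-1}(B) = \overline{U_{1}} \times \{0\}$ is inherited from the bicollar. Finally, $c'(U_{1} \times [0,1))$ is the intersection with $Y$ of the $c$-image of the open set $\{(y,t) \in U_{1} \times (-\delta,\delta) \suchthat t < g(y)\}$ — and here one uses that $c(U_{1} \times (-\delta,0))$ avoids $Y$ — so it is open in $Y$. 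Thus $c'$ is a local collar of $B$ in $B \cup S^{+}$, and a closed embedding whenever $c$ is, which gives the strong case. Running the argument with the roles of $S^{+}$ and $S^{-}$ interchanged handles $B \cup S^{-}$.
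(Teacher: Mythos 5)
Your strategy coincides with the paper's: restrict the bicollar so that its image lies in $S$, use connectedness of each ray $c(\{y\}\times(0,\delta))$ to assign a side $\sigma(y)\in\{+,-\}$ to every base point, flip the parameter where $\sigma=-$, and rescale the positive half into a one-sided collar. The closedness of $B$ in $B\cup S^{\pm}$, the rescaling via Lemma~\ref{lemma:continuouscut}, and the openness of $c'(U_1\times[0,1))$ in $B\cup S^{+}$ are all handled correctly. But you stop exactly at the step that carries the content of the lemma: the words ``Granting this'' occur at the point where the two-sidedness hypothesis must actually be used, namely in showing that $\sigma$ is locally constant. The paper's argument there is the following. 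Because $O\setminus B$ has \emph{exactly two} components for each component $O$ of $S$, each of $O^{+},O^{-}$ is the complement of the other in $O\setminus B$ and hence clopen in $O\setminus B$. Setting $E^{\pm}:=\{y\in\overline{U}: c(\{y\}\times(0,1])\subseteq S^{\pm}\}$, one has $E^{+}\cup E^{-}=\overline{U}$ and $(\overline{U}\times(0,1])\cap c^{-1}(S^{\pm})=E^{\pm}\times(0,1]$, and the paper concludes that $E^{+}$ and $E^{-}$ are disjoint and open; the flip $c_{\Uparrow}(y,t)=c(y,\pm t)$ for $y\in E^{\pm}$ is then continuous because the partition is by clopen sets, with no need to shrink $U_1$ or make $\sigma$ constant near $x$.

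Your worry that this is the delicate point is well founded, which is exactly why the step cannot be left as ``granted.'' Clopenness of $O^{\pm}$ in $O\setminus B$ does not by itself give openness of $S^{\pm}$ in $S\setminus B$, since components of $S$ need not be open, and the definition of a $B$-separation allows the labels $+$ and $-$ to be assigned independently on each component of $S$. Concretely, take $X=S=K\times[-1,1]$ with $K=\{0\}\cup\{1/n: n\in\bbN\}$, $B=K\times\{0\}$, and the product bicollar $c(b,t)=b+(0,t)$; choosing $O^{+}$ to be the upper half of the column over $0$ but the lower half of each column over $1/n$ gives a legitimate $B$-separation for which $\sigma$ is not locally constant at $(0,0)$ (and indeed no local collar of $B$ in $B\cup S^{+}$ exists at that point). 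So the local constancy of $\sigma$ is not a formality that follows from what you have written; it requires an argument tying the openness of the bicollar image to a coherent behaviour of the chosen $B$-separation across nearby components of $S$, and supplying that argument (or an additional hypothesis such as local connectedness of $S$, or the freedom to choose the separation) is the missing piece of your proof.
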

\begin{proof} 
    We prove the result in several steps. 
    \begin{enumerate}[1., wide=10pt, itemindent=\parindent, leftmargin=0pt, topsep=0pt, itemsep=0pt]
    \item 
Write $S_{0}^{+} := B \cup S^{+}$ and $S_{0}^{-} := B \cup S^{-}$.
    We observe that $S_{0}^{+}$ and $S_{0}^{-}$ are closed in $S$. 
    Consider now a collection $\calC$ of (strong) local bicollars whose bases cover $B$:
    \[
        \calC = \left\{ c_{\alpha} : \overline{U_\alpha} \times [-1,1] \to X \suchthat* \alpha \in \kappa \right\}
        .
    \]
    We define $c^{+} := c_{|\overline{U_\alpha} \times [0,1]}$ and $c^{-} := c_{|\overline{U_\alpha} \times [-1,0]}$
    for each $c \in \calC$. 
    
    \item 
    Let $c_{\alpha} \in \calC$. 
    Then $\overline{U_{\alpha}}$ is (strongly) collared in the images of $c^{+}_{\alpha}$ and $c^{-}_{\alpha}$. 
    Using Corollary~\ref{corollary:restrictedcollarthatavoidsclosedset},
    we obtain (strong) collars $c^{+}_{\alpha,\#}$ and $c^{-}_{\alpha,\#}$
    whose images lie within the images of $c^{+}_{\alpha}$ and $c^{-}_{\alpha}$, respectively,
    but are disjoint from the closed set $X \setminus S$
    and which look like $c^{+}_{\alpha}$ and $c^{-}_{\alpha}$, respectively,
    over a sufficiently small neighborhood of any $x \in \overline{U_\alpha}$. 
    We join them to a (strong) bicollar 
    \begin{align*}
        c_{\alpha,\#} : \overline{U_{\alpha}} \times [-1,1] \to X, \quad (x,t) = 
        \begin{cases} 
            c^{+}_{\alpha,\#}(x,t)  & \text{ if } t \geq 0,
            \\
            c^{-}_{\alpha,\#}(x,t) & \text{ if } t < 0,
        \end{cases}
    \end{align*}
    whose image lies in the image of $c_{\alpha}$ and is disjoint from $X \setminus S$. 
    We may thence assume without loss of generality that the members of $\calC$
    map into $S$.
    
    \item 
    Consider any $x \in \overline{U_{\alpha}}$. 
    The image $c_{\alpha}\left( \{x\} \times [0,1] \right)$ is connected,
    and it is either a subset of $S_{0}^{+}$ or of $S_{0}^{-}$. 
    We introduce the disjoint relatively open subsets of $\overline{U_{\alpha}} \times [0,1]$:
    \begin{align*}
        O_{\alpha}^{+} := (\overline{U_{\alpha}} \times  (0,1]) \cap c_{\alpha}^{-1}(S^{+}),
        \quad 
        O_{\alpha}^{-} := (\overline{U_{\alpha}} \times  (0,1]) \cap c_{\alpha}^{-1}(S^{-}).
    \end{align*}
    For any $t \in (0,1]$, 
    if $(x,t) \in O_{\alpha}^{+}$, then $\{x\} \times (0,1] \subseteq O_{\alpha}^{+}$,
    and 
    if $(x,t) \in O_{\alpha}^{-}$, then $\{x\} \times (0,1] \subseteq O_{\alpha}^{-}$.
    Consequently, we have a partition of $U_{\alpha}$ into disjoint sets 
    \begin{align*}
      E^{+} := \left\{ x \in \overline{U_{\alpha}} \suchthat* \{x\} \times (0,1] \subseteq O_{\alpha}^{+} \right\},
      \\
      E^{-} := \left\{ x \in \overline{U_{\alpha}} \suchthat* \{x\} \times (0,1] \subseteq O_{\alpha}^{-} \right\}.
    \end{align*}
    Notice that 
    \begin{align*}
        O_{\alpha}^{+} = E^{+} \times (0,1],
        \quad 
        O_{\alpha}^{-} = E^{-} \times (0,1].
    \end{align*}
    It is now easily seen that $E^{+}$ and $E^{-}$ are disjoint and open. 
    We define 
    \begin{align*}
        c_{\alpha,\Uparrow} : \overline{U_{\alpha}} \times [-1,1] \to X, \quad (x,t) = 
        \begin{cases} 
            c_{\alpha}(x, t) & \text{ if } x \in E^{+},
            \\
            c_{\alpha}(x,-t) & \text{ if } x \in E^{-}.
        \end{cases}
    \end{align*}
    This is a (strong) local bicollar of $B$ in $X$ again.
    Consequently, we may assume without loss of generality that each $c_{\alpha} \in \calC$ satisfies
    \begin{align*}
        c_{\alpha}( \overline{U_{\alpha}} \times [ 0,1] ) \subseteq S^{+}_{0},
        \quad 
        c_{\alpha}( \overline{U_{\alpha}} \times [-1,0] ) \subseteq S^{-}_{0}.
    \end{align*}
    Clearly, 
    $c_{\alpha}^{+}$ and $c_{\alpha}^{-}$ are local homeomorphisms,
    they are the identity over $\overline{U_{\alpha}}$,
and their images intersect with $B$ only along $\overline{U_{\alpha}}$.
    Moreover, their images are open in $S_{0}^{+}$ and in $S_{0}^{-}$, respectively. 
    \end{enumerate}
    In summary, $B$ is closed and locally (strongly) collared both in $S_{0}^{+}$ and in $S_{0}^{-}$.
\end{proof}

\begin{lemma}
    Let $B$ be a closed subset of a topological space $X$ 
    that is two-sided in an open neighborhood $S$. 
    If $B$ is (strongly) collared in $B \cup S^{+}$ and $B \cup S^{-}$,
    then $B$ is (strongly) bicollared in $X$.
\end{lemma}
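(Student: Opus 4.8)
The plan is to obtain the bicollar by reflecting and pasting the two given one-sided collars. Fix collars $c^{+} : B \times [0,1] \to S_{0}^{+}$ and $c^{-} : B \times [0,1] \to S_{0}^{-}$ witnessing that $B$ is collared in $S_{0}^{+} = B \cup S^{+}$ and in $S_{0}^{-} = B \cup S^{-}$, respectively, and set
\[
    c : B \times [-1,1] \to X,
    \qquad
    c(x,t) :=
    \begin{cases}
        c^{+}(x,t) & \text{if } t \geq 0,
        \\
        c^{-}(x,-t) & \text{if } t \leq 0.
    \end{cases}
\]
The two branches agree at $t = 0$, where both equal $x$, and the domains $B \times [0,1]$ and $B \times [-1,0]$ are closed in $B \times [-1,1]$ with union the whole space, so $c$ is well-defined and continuous by the Pasting Lemma (Lemma~\ref{lemma:pastinglemma}), and clearly $c(x,0) = x$. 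I record the structural facts to be used: exactly as in the proof of Lemma~\ref{lemma:twosidedlocallybicollaredimplieslocallycollared}, the sets $S_{0}^{+}$ and $S_{0}^{-}$ are closed in $S$, with $S_{0}^{+} \cup S_{0}^{-} = S$ and $S_{0}^{+} \cap S_{0}^{-} = B$; hence $S^{+} = S \setminus S_{0}^{-}$ and $S^{-} = S \setminus S_{0}^{+}$ are open in $S$ and therefore open in $X$. Since $(c^{\pm})^{-1}(B) = B \times \{0\}$, we get $c^{+}(B \times (0,1]) \subseteq S^{+}$ and $c^{-}(B \times (0,1]) \subseteq S^{-}$, and these two sets together with $B$ are pairwise disjoint. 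Abbreviate $W^{+} := c^{+}(B \times [0,1])$, $W^{-} := c^{-}(B \times [0,1])$, $W := W^{+} \cup W^{-}$ (so $W^{+} \cap W^{-} = B$), and $N := c(B \times (-1,1)) = c^{+}(B \times [0,1)) \cup c^{-}(B \times [0,1))$.

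\textbf{Injectivity and embedding.} Injectivity of $c$ is immediate: two points with equal image either lie in the same branch, where $c^{+}$ or $c^{-}$ is injective, or in different branches, in which case their images sit in the pairwise-disjoint sets $c^{+}(B \times (0,1])$, $c^{-}(B \times (0,1])$, or $B$. To see that $c$ is an embedding I apply the Pasting Lemma again, now to $c^{-1}$ on the image $W$ equipped with its subspace topology, using the closed cover $W = W^{+} \cup W^{-}$: here $W^{+}$ is closed in $W$ because $W \setminus W^{+} = W^{-} \setminus B = c^{-}(B \times (0,1]) = W \cap S^{-}$ is open in $W$ (as $S^{-}$ is open in $X$), and symmetrically $W^{-}$ is closed in $W$. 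On $W^{+}$ the inverse of $c$ equals $(c^{+})^{-1}$, continuous since $c^{+}$ is an embedding, and on $W^{-}$ it equals the reflection $(x,t) \mapsto (x,-t)$ composed with $(c^{-})^{-1}$, again continuous; the Pasting Lemma then makes $c^{-1}$ continuous on $W$, so $c$ is a homeomorphism onto its image.

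\textbf{Openness of $N$ and conclusion.} It remains to show that $N$ is open in $X$; with the two previous points and Remark~\ref{remark:simplifiedbicollardefinition} this settles the topological case. Let $p \in N$. If $p \in S^{+}$, then $p \notin B$ and $p \notin c^{-}(B \times [0,1)) \subseteq S_{0}^{-}$, so $p \in c^{+}(B \times (0,1)) = c^{+}(B \times [0,1)) \cap (X \setminus B)$, which is open in $X$ (being the intersection of a set open in $S_{0}^{+}$ with $X \setminus B$, landing inside the $X$-open set $S^{+}$); the case $p \in S^{-}$ is symmetric. If $p \in B$, choose open $O^{+}, O^{-} \subseteq X$ with $O^{+} \cap S_{0}^{+} = c^{+}(B \times [0,1))$ and $O^{-} \cap S_{0}^{-} = c^{-}(B \times [0,1))$ (possible since those sets are relatively open in $S_{0}^{+}$ and $S_{0}^{-}$), and put $O := O^{+} \cap O^{-} \cap S$, an open neighborhood of $p$ in $X$. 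Any $q \in O$ lies in $S = S_{0}^{+} \cup S_{0}^{-}$; if $q \in S_{0}^{+}$ then $q \in O^{+} \cap S_{0}^{+} = c^{+}(B \times [0,1)) \subseteq N$, and if $q \in S_{0}^{-}$ then likewise $q \in N$. Hence $O \subseteq N$, so $N$ is open in $X$, and Remark~\ref{remark:simplifiedbicollardefinition} shows $c$ is a bicollar of $B$ in $X$. In the strong case, $c^{+}$ and $c^{-}$ are closed embeddings into $S_{0}^{+}$ and $S_{0}^{-}$, so $W^{+}$ and $W^{-}$ are closed in $S_{0}^{+}$ and $S_{0}^{-}$, hence closed in $S$ (these being closed in $S$), whence $W = c(B \times [-1,1])$ is closed in $S$; since $B \subseteq N \subseteq W \subseteq S$ with $B$ closed in $X$ and $N$ open in $X$, the same bookkeeping of relatively closed sets shows $W$ picks up no limit points in $X \setminus S$, so $W$ is closed in $X$ and $c$ is a strong bicollar.

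\textbf{Main obstacle.} I expect the delicate point to be the behaviour of $c$ exactly along $B$, where the two collars are glued: namely that $c$ is a homeomorphism onto its image and that the two one-sided openness conditions ``$c^{\pm}(B\times[0,1))$ open in $S_{0}^{\pm}$'' merge into openness of $N$ in $X$. Both are resolved by careful manipulation of relatively open and relatively closed subsets (in the spirit of Lemma~\ref{lemma:joiningrelativeopensets} and the Pasting Lemma) rather than by any genuinely new idea; the point to watch is that $X$ is only assumed Hausdorff, so one must work with the closed covers $\{B\times[0,1], B\times[-1,0]\}$ and $\{W^{+},W^{-}\}$ explicitly rather than invoke normality or metrizability.
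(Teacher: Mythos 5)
Your construction and decomposition are exactly the paper's: you glue $c^{+}$ and $c^{-}$ to get $c$, verify continuity by the Pasting Lemma over the closed cover $\{B\times[0,1],\,B\times[-1,0]\}$, establish injectivity from the disjointness of $c^{+}(B\times(0,1])$, $c^{-}(B\times(0,1])$, and $B$, and deduce openness of $N=c(B\times(-1,1))$ from the two one-sided openness conditions and the closedness of $S_{0}^{\pm}$ in $S$. The only difference from the paper is cosmetic: where the paper shows $c$ maps opens to relatively open sets by invoking Lemma~\ref{lemma:joiningrelativeopensets} with the closed cover $W=W^{+}\cup W^{-}$, you instead apply the Pasting Lemma to $c^{-1}$ over that same cover, and where the paper cites Lemma~\ref{lemma:joiningrelativeopensets} once more for the openness of $N$ in $S$, you unwind that lemma's $O:=O^{+}\cap O^{-}$ argument explicitly. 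Both are valid and of equal weight; the paper's version is shorter since it reuses Lemma~\ref{lemma:joiningrelativeopensets}. One small caution: your treatment of the strong case (``the same bookkeeping\dots shows $W$ picks up no limit points in $X\setminus S$'') is asserted rather than argued, but this exactly mirrors the paper's own one-line assertion ``If $h^{+}$ and $h^{-}$ have closed images, then so does $h$,'' so it is at the same level of detail as the source.
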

\begin{proof}    
    Consider the two sets $S_{0}^{+} = B \cup S^{+}$ and $S_{0}^{-} = B \cup S^{-}$,
    which are both closed in $S$.   
    By assumption, we have collars 
    \begin{gather*}
        h^{+} : B \times [0,1] \rightarrow S_{0}^{+}, 
        \quad 
        h^{-} : B \times [0,1] \rightarrow S_{0}^{-}.
    \end{gather*}
    In particular, $h^{+}(x,0) = h^{-}(x,0) = x$ for any $x \in B$. 
    We define 
    \begin{align*}
        h : B \times [-1,1] \to X, \quad (x,t) = 
        \begin{cases} 
            h^{+}(x,t)  & \text{ if } t \geq 0,
            \\
            h^{-}(x,-t) & \text{ if } t < 0.
        \end{cases}
    \end{align*}
    Let $W \subseteq S$, $W^{+} \subseteq S_{0}^{+}$ and $W^{-} \subseteq S_{0}^{-}$ denote the images of the embeddings $h$, $h^{+}$ and $h^{-}$, respectively.
    Then $B = W^{+} \cap W^{-}$ and $W = W^{+} \cup W^{-}$.
    If $h^{+}$ and $h^{-}$ have closed images, then so does $h$. 
    The sets $h( B \times (0,1) )$ and $h( B \times (-1,0) )$ are open in $S^{+}$ and $S^{-}$, respectively,
    and thus they are open in $W$, which means that $W^{+}$ and $W^{-}$ are both closed in $W$.
    
    Obviously, $h$ is injective. 
    The sets $B \times [0,1]$ and $B \times [-1,0]$ are closed in $B \times [-1,1]$,
    and so the pasting lemma implies that $h$ is continuous. 
    
    If $O \subseteq B \times [-1,1]$ is open, 
    then $h( O \cap (B \times [0,1]) )$ and $h( O \cap (B \times [-1,0]) )$ are open in $W^{+}$ and $W^{-}$, respectively.
    Since the latter two sets are closed in $W$, and since $B = W^{+} \cap W^{-}$, 
    Lemma~\ref{lemma:joiningrelativeopensets} now shows that $h$ maps open subsets of $B \times [-1,1]$ onto sets relatively open within its image.
    
    Finally, $h( B \times [0,1) )$ and $h( B \times (-1,0] )$ are open in $S_{0}^{+}$ and $S_{0}^{-}$, respectively. 
    The latter two sets are closed in $S$, and we have $B = S_{0}^{+} \cap S_{0}^{-}$.
    Another application of Lemma~\ref{lemma:joiningrelativeopensets} shows 
    that $h( B \times (-1,1) )$ is open in $S$. 
    Since $S$ is open in $X$, 
    it follows that $h( B \times (-1,1) )$ is open in $X$ as well. 
\end{proof}

\begin{theorem}
    Let $B$ be a locally bicollared two-sided closed subset of a Hausdorff space $X$.
    If $B$ is paracompact in $X$, then $B$ is bicollared.
\end{theorem}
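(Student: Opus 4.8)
The plan is to reduce the two-sided case to two applications of the one-sided collar theorem, exactly mirroring the pattern set up by the two lemmas immediately preceding this statement. Since $B$ is two-sided, I would fix an open neighborhood $S$ of $B$ in which $B$ is two-sided, together with a $B$-separation $S = S^{+} \cup B \cup S^{-}$, and write $S_{0}^{+} = B \cup S^{+}$ and $S_{0}^{-} = B \cup S^{-}$, which are both closed in $S$. By Lemma~\ref{lemma:twosidedlocallybicollaredimplieslocallycollared}, the hypothesis that $B$ is locally bicollared implies that $B$ is closed and locally collared in $S_{0}^{+}$ and in $S_{0}^{-}$. Moreover $S_{0}^{+}$ and $S_{0}^{-}$ are Hausdorff, being subspaces of the Hausdorff space $X$.

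The one point that genuinely needs checking is that $B$ is \emph{paracompact in} $S_{0}^{+}$ and in $S_{0}^{-}$. I would verify this straight from the definition. Let $\calU$ be a cover of $B$ by sets open in $S_{0}^{+}$. Each member of $\calU$ has the form $O \cap S_{0}^{+}$ with $O$ open in $S$, hence open in $X$, since $S$ is open in $X$; thus the corresponding sets $O$ form a cover of $B$ by sets open in $X$. As $B$ is paracompact in $X$, this cover has a locally finite refinement $\calV$ by sets open in $X$ that still covers $B$, and intersecting the members of $\calV$ with $S_{0}^{+}$ produces a locally finite refinement of $\calU$ by sets open in $S_{0}^{+}$ that covers $B$. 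Hence $B$ is paracompact in $S_{0}^{+}$, and by the same argument in $S_{0}^{-}$.

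With these facts established, Theorem~\ref{theorem:collaring:paracompactinambient} applies to $B$ as a closed, locally collared subset that is paracompact in the Hausdorff space $S_{0}^{+}$, and likewise in $S_{0}^{-}$; therefore $B$ is collared in $B \cup S^{+}$ and in $B \cup S^{-}$. The lemma directly preceding this theorem then glues these two one-sided collars into a bicollar of $B$ in $X$, which finishes the proof.

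As for difficulty, there is no real obstacle here: the argument is essentially a bookkeeping reduction. The only step demanding care is confirming that the ``paracompact in'' property descends from the ambient space $X$ to the locally closed pieces $S_{0}^{\pm}$, and this works precisely because $S$ is open in $X$, so that restricting an open set of $X$ to $S_{0}^{\pm}$ neither destroys the covering property nor local finiteness. (If one wanted the \emph{strong} analogue, one would instead invoke the strong version of the collar theorem, available when $X$ is paracompact; but the statement as given requires only the topological version above.)
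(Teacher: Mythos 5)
Your proposal is correct and follows exactly the paper's (one-line) proof: apply Lemma~\ref{lemma:twosidedlocallybicollaredimplieslocallycollared} to get local collars in $S_0^{\pm}$, invoke Theorem~\ref{theorem:collaring:paracompactinambient} in each of $S_0^{\pm}$, and glue via the subsequent lemma. The only difference is that you explicitly verify that the property of being paracompact in $X$ descends to being paracompact in $S_0^{\pm}$ — a detail the paper leaves implicit — and your verification of it is correct.
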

\begin{proof}
    We combine the preceding two lemmas and Theorem~\ref{theorem:collaring:paracompactinambient}.
\end{proof}

\section{Metric spaces and Lipschitz collars}\label{section:lipschitz}

Up to now, we have worked within the category of topological spaces. 
The second big topic of this manuscript is the collar theorem within the Lipschitz category. 
Here, the morphisms between metric spaces are locally Lipschitz mappings.
We prove the Lipschitz collar theorem when the base set is closed, as already proven by Luukkainen and V\"ais\"al\"a. 
In addition, our constructive approach enables us to prove that the global collar is bi-Lipschitz 
under additional assumptions on the initial collection of local collars,
and we provide explicit estimates. 
\\

Let $X$ be a metric space.
In what follows, all metrics are denoted by $\dist$. 
We also write $\Ball(z,r)$ for the open ball of radius $r \in (0,\infty]$ centered at $z \in X$,
and we write $\Ball(S,r) = \cup_{z \in S}\Ball(z,r)$ for any set $S \subseteq X$. 
Whenever $X$ and $Y$ are metric spaces, $X \times Y$ carries the product metric\footnote{Of course, other choices of product metrics are possible as well,
but this is the most convenient one for our purposes.}
\[
    \dist\left( (x,y), (x',y') \right) := \dist(x,x') + \dist(y,y').
\]
All metric spaces are Hausdorff, regular, normal, collectionwise normal, and paracompact.
If $\calU$ is an open cover of $X$, then $\delta > 0$ is called a \emph{Lebesgue number} of the cover 
if for all $x \in X$, the ball $\Ball(x,\delta)$ is contained in some member of $\calU$. 

Suppose that $f \colon U \to Y$ is a continuous mapping from $X$ into another metric space $Y$.
Given any subset $U \subseteq X$,
we call $f$ \emph{Lipschitz over $U$} with Lipschitz constant $\Lip(f,U) > 0$ if 
\begin{align*}
    \dist\left( f(x), f(x') \right) \leq \Lip(f,U) \cdot \dist\left(x,x'\right), \quad x, x' \in U.
\end{align*}
We call $f : X \to Y$ \emph{locally Lipschitz} if every point in $X$ has an open neighborhood over which $f$ is Lipschitz. 
If $f : X \to Y$ is a homeomorphism, 
then we call $f$ a \emph{lipeomorphism} if $f$ and its inverse are locally Lipschitz,
and we call $f$ \emph{bi-Lipschitz} if $f$ and its inverse are Lipschitz.
Analogously, if $f : X \rightarrow Y$ is an embedding, 
then $f$ is called \emph{LIP} if $f : X \rightarrow f(X)$ is a lipeomorphism,
and we call it (with some abuse of terminology) \emph{bi-Lipschitz} if $f : X \rightarrow f(X)$ is bi-Lipschitz. 
Whenever $f : X \to f(X)$ is bi-Lipschitz, we abbreviate 
\begin{align*}
    \LipInv(f,X) := \Lip(f^{-1},f(X)),
    \quad
\biLip(f,X) := \Lip(f,X) \cdot \LipInv(f,X)
    .
\end{align*}
We may further abbreviate those constants by $\Lip(f)$, $\LipInv(f)$, and $\biLip(f)$ whenever there is no danger of ambiguity. 
\\

We begin with the discussion of the collar theorem. 
First, we establish an important fundamental result, namely the existence of locally Lipschitz partitions of unity,
which is of interest on its own. 
Subsequently, we extend a few auxiliary results from the topological case to the Lipschitz case. 

\begin{proposition}\label{proposition:lipschitzpartitionofunity}
    Let $\calU = ( U_\alpha )_{\alpha \in \kappa}$ be a locally finite open cover of $X$.
    Then there exists a locally Lipschitz partition of unity $( \lambda_\alpha )_{\alpha \in \kappa}$ subordinate to $\calU$.
    
    In addition, suppose that $\calU$ has Lebesgue number $\delta > 0$
    and that each point of $X$ intersects at most $N \in \bbN$ members of $\calU$.
    Given any fixed $\delta_0 \in (0,\delta)$, 
    we can assume for all $\alpha \in \kappa$ that $\Ball(z,\delta-\delta_0) \subseteq U_{\alpha}$ for all $z \in \supp(\lambda_\alpha)$
    and that 
    \begin{gather*}
        \Lip\left( \lambda_\alpha \right) \leq N \delta_{0}^{-1}
        ,
        \quad 
        \Lip\left( \lambda_\alpha, U_{\alpha} \right) \leq (N-1) \delta_{0}^{-1}
        .
    \end{gather*}
    Moreover, for any subset $\mu \subseteq \kappa$ we then have 
    \begin{gather*}
        \Lip\left(
            \textstyle\sum\nolimits_{\beta\in\mu}\lambda_\beta
        \right)
        \leq 
        N \delta_{0}^{-1}
        .
    \end{gather*}
\end{proposition}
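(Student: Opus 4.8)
The plan is to build the partition of unity from truncated distance functions and then to read off the Lipschitz bounds from two elementary observations. For the bare existence statement I would first invoke the shrinking lemma (the space is metric, hence normal, and $\calU$ is locally finite) to obtain an open cover $( V_\alpha )_{\alpha\in\kappa}$ with $\overline{V_\alpha}\subseteq U_\alpha$, put $\rho_\alpha := \min\left\{ 1,\, \dist(\cdot,\, X\setminus V_\alpha) \right\}$, which is $1$-Lipschitz with $\supp\rho_\alpha\subseteq\overline{V_\alpha}$, let $\Phi := \sum_{\alpha}\rho_\alpha$, a locally finite and hence locally Lipschitz sum which is everywhere positive because $(V_\alpha)$ covers $X$, and set $\lambda_\alpha := \rho_\alpha/\Phi$; since $\Phi$ is locally bounded below, each $\lambda_\alpha$ is locally Lipschitz and $\supp\lambda_\alpha\subseteq\overline{V_\alpha}\subseteq U_\alpha$. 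For the quantitative part I would instead use the Lebesgue number directly: with $d_\alpha(x):=\dist(x,\,X\setminus U_\alpha)$ set $\phi_\alpha:=\max\left\{ 0,\, d_\alpha-(\delta-\delta_0) \right\}$, a $1$-Lipschitz function that vanishes off $U_\alpha$ and is supported in $\left\{ x \suchthat* d_\alpha(x)\ge\delta-\delta_0 \right\}$. If $z$ lies in this set and $\dist(z,w)<\delta-\delta_0\le d_\alpha(z)$ then $w\notin X\setminus U_\alpha$, so $\Ball(z,\delta-\delta_0)\subseteq U_\alpha$ for every $z\in\supp\phi_\alpha$; this is the claimed support property, and it forces $\supp\lambda_\alpha\subseteq U_\alpha$. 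Since $\delta$ is a Lebesgue number, some $d_\alpha(x)\ge\delta$ at each $x$, so $\Phi:=\sum_\alpha\phi_\alpha\ge\delta_0$ everywhere; put $\lambda_\alpha:=\phi_\alpha/\Phi$ and abbreviate $P_x:=\left\{ \beta \suchthat* x\in U_\beta \right\}$, so that $\left| P_x \right|\le N$.

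The first ingredient is a one-sided Lipschitz estimate: for any $\nu\subseteq\kappa$, writing $\Psi_\nu:=\sum_{\beta\in\nu}\phi_\beta$, one has $\Psi_\nu(y)-\Psi_\nu(x)\le\left| \nu\cap P_y \right|\cdot\dist(x,y)$, because only the $\beta\in\nu$ with $\phi_\beta(y)>\phi_\beta(x)\ge 0$ contribute to the positive part, each such $\beta$ has $y\in U_\beta$, and $\phi_\beta(y)-\phi_\beta(x)\le\dist(x,y)$ by $1$-Lipschitzness. In particular $\left| \Psi_\nu(x)-\Psi_\nu(y) \right|\le N\dist(x,y)$ and $\left| \Phi(x)-\Phi(y) \right|\le N\dist(x,y)$. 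The second ingredient is the identity, valid because $\Psi_\nu+\Psi_{\kappa\setminus\nu}=\Phi$,
\[
    \frac{\Psi_\nu(x)}{\Phi(x)}-\frac{\Psi_\nu(y)}{\Phi(y)}
    =
    \frac{\Psi_\nu(x)\bigl(\Psi_{\kappa\setminus\nu}(y)-\Psi_{\kappa\setminus\nu}(x)\bigr)+\Psi_{\kappa\setminus\nu}(x)\bigl(\Psi_\nu(x)-\Psi_\nu(y)\bigr)}{\Phi(x)\Phi(y)}.
\]

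Bounding both variations in the numerator by $N\dist(x,y)$ and using $\Psi_\nu(x)+\Psi_{\kappa\setminus\nu}(x)=\Phi(x)$, the numerator is at most $N\Phi(x)\dist(x,y)$, hence $\bigl| \sum_{\beta\in\nu}\lambda_\beta(x)-\sum_{\beta\in\nu}\lambda_\beta(y) \bigr|\le N\dist(x,y)/\Phi(y)\le N\delta_0^{-1}\dist(x,y)$; taking $\nu=\{\alpha\}$ gives $\Lip(\lambda_\alpha)\le N\delta_0^{-1}$. For the estimate on $U_\alpha$ restrict to $x,y\in U_\alpha$. If $\phi_\alpha(x)=0$ or $\phi_\alpha(y)=0$, then $\left| \lambda_\alpha(x)-\lambda_\alpha(y) \right|\le\dist(x,y)/\delta_0$ directly—say $\phi_\alpha(y)=0$, then $d_\alpha(y)\le\delta-\delta_0$, so $\phi_\alpha(x)=d_\alpha(x)-(\delta-\delta_0)\le d_\alpha(y)+\dist(x,y)-(\delta-\delta_0)\le\dist(x,y)$ and $\lambda_\alpha(x)=\phi_\alpha(x)/\Phi(x)\le\dist(x,y)/\delta_0$. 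Otherwise $\alpha\in P_x\cap P_y$, so the one-sided bound for $\Psi_{\kappa\setminus\{\alpha\}}$ improves to $\left| P_x \right|-1\le N-1$ and $\left| P_y \right|-1\le N-1$; using also $\left| \phi_\alpha(x)-\phi_\alpha(y) \right|\le\dist(x,y)$, the numerator becomes at most $\bigl((N-1)\phi_\alpha(x)+\Psi_{\kappa\setminus\{\alpha\}}(x)\bigr)\dist(x,y)=\bigl(\Phi(x)+(N-2)\phi_\alpha(x)\bigr)\dist(x,y)\le(N-1)\Phi(x)\dist(x,y)$, so $\Lip(\lambda_\alpha,U_\alpha)\le(N-1)\delta_0^{-1}$. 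When $N=1$ the $U_\alpha$ are pairwise disjoint and clopen, each $\lambda_\alpha$ equals the indicator of $U_\alpha$, and all three bounds hold trivially.

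The algebra above is routine; the one genuine point is obtaining the constant $N\delta_0^{-1}$ rather than $2N\delta_0^{-1}$ or $(N+1)\delta_0^{-1}$. A naive triangle-inequality bound on $\left| \Phi(x)-\Phi(y) \right|$ counts every set meeting $x$ or $y$ and produces $2N$; the remedy is the one-sided observation that only the sets gaining mass matter, and there are at most $N$ of those, together with writing the difference of quotients in the symmetric ``cross'' form $\Psi_\nu(x)\Psi_{\kappa\setminus\nu}(y)-\Psi_\nu(y)\Psi_{\kappa\setminus\nu}(x)$, so that the weights $\Psi_\nu(x)$ and $\Psi_{\kappa\setminus\nu}(x)$ reassemble into $\Phi(x)$ and cancel the denominator.
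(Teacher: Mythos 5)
Your proof is correct and follows essentially the same route as the paper: truncated/shifted distance functions to the complement of a $(\delta-\delta_0)$-shrinking of the cover, the one-sided observation that at most $N$ terms can increase (giving $\Lip$ of any subsum $\leq N$ rather than $2N$), and the same cross-form identity for the difference of quotients so that the numerator reassembles into $\Phi(x)$. If anything, your case analysis for the $(N-1)\delta_0^{-1}$ bound is more complete than the paper's, which only verifies that estimate for $x,y$ in the shrunken set $V_\alpha$ and leaves the boundary case and $N=1$ implicit.
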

\begin{proof} 
    We let $\calV = \left( V_{\alpha} \right)_{\alpha \in \kappa}$ be an open cover of $X$
    satisfying $\overline{V_\alpha} \subseteq U_{\alpha}$ for each $\alpha \in \kappa$. 
    We define\footnote{If $V_{\alpha} \neq X$, we can simply define $f_\alpha(x) := \dist( x, X \setminus V_\alpha )$.} $f_\alpha(x) := \min( 1, \dist( x, X \setminus V_\alpha ) )$ and 
    $\lambda_\alpha(x) := f_\alpha(x) / \textstyle\sum\nolimits_{ \beta \in \kappa } f_\beta(x)$.
Each $f_\alpha$ is a non-negative function with support equal to $\overline V_\alpha$, positive over $V_\alpha$,
    and having Lipschitz constant $1$.
    The function $\lambda_{\alpha}$ is well-defined because $\calV$ is a locally finite cover of $X$.
    Any $x \in X$ has a neighborhood $U$ such that there exists $\alpha \in \kappa$ satisfying $f_{\alpha|U} \geq \epsilon$
    for some $\epsilon > 0$.
    Consequently, each $\lambda_{\alpha}$ is locally Lipschitz~\cite[Section~2, see also Theorem~5.3]{luukkainen1977elements}.
    
    Now suppose that $\delta$ and $N$ are as in the second part of the proposition, and that $\delta_0 \in (0,\delta)$.
    We then define $V_{\alpha}$ as the set of those points in $U_{\alpha}$
    whose distance to $X \setminus U_{\alpha}$ is strictly more than $\delta - \delta_0 > 0$.
    Clearly, $\calV$ is a locally finite open cover with Lebesgue number $\delta_{0}$.
    We have $\Ball(z,\delta - \delta_0) \subseteq U_{\alpha}$ for all $z \in \supp(\lambda_\alpha)$, $\alpha \in \kappa$.
    
    Given any subset $\mu \subseteq \kappa$, 
    we henceforth abbreviate
    \begin{align*}
        f_{\mu} := \textstyle\sum\nolimits_{ \beta \in \mu } f_\beta,
        \quad 
        \lambda_{\mu} := \textstyle\sum\nolimits_{ \beta \in \mu } \lambda_\beta.
    \end{align*}
    These are well-defined functions. 
    When $x,y \in X$ and $\alpha \in \kappa$, then $f_{\alpha}(x) = 0 \leq f_{\alpha}(y)$ or $f_{\alpha}(x) \leq f_{\alpha}(y) + \dist(x,y)$. 
    Therefore, $f_{\mu}(x) \leq f_{\mu}(y) + N \dist(x,y)$ holds for any $\mu \subseteq \kappa$.
    In particular, $\Lip(f_{\mu}) \leq N$.
    
    Given $x, y \in X$ and $\mu \subseteq \kappa$, we observe  
    \begin{align*}
        \lambda_\mu(x) - \lambda_\mu(y)
        &
        =
\frac{ \left( f_\mu(x) - f_\mu(y) \right) f_\kappa(y) - f_\mu(y) \left( f_\kappa(x) - f_\kappa(y) \right) }
        { f_\kappa(x) f_\kappa(y) }
        \\&\;
        =
        \frac{ 
            \left( f_\mu(x) - f_\mu(y) \right) f_{\kappa\setminus\mu}(y)
            -
            f_\mu(y) \left( f_{\kappa\setminus\mu}(x) - f_{\kappa\setminus\mu}(y) \right) 
        }{ f_\kappa(x) f_\kappa(y) }
        .
    \end{align*}
    We obtain the upper bound 
    \begin{align*}
        \left| \lambda_\mu(x) - \lambda_\mu(y) \right| 
        &
        \leq
        \frac{ 
            \Lip( f_{\mu} ) f_{\kappa\setminus\mu}(y) 
            +
            \Lip( f_{\kappa\setminus\mu} ) f_{\mu}(y) 
        }{
            f_\kappa(x) f_\kappa(y) 
        }
        \dist(x,y) 
        \\&
        \leq
        \max\left( 
            \Lip( f_{\mu} )
            ,
            \Lip( f_{\kappa\setminus\mu} )
        \right)
        \delta_{0}^{-1}
        \dist(x,y) 
        .
    \end{align*}
    Consider now the special case where $\mu = \{\alpha\}$ is a singleton.
In the special case $x, y \in V_{\alpha}$, one easily estimates 
    \begin{align*}
        \left| \lambda_\alpha(x) - \lambda_\alpha(y) \right|
        &
        \leq
        \left( 
            N-1
        \right)
        \delta_{0}^{-1}
        \dist(x,y) 
        .
    \end{align*}
    The desired inequalities are now evident.  
\end{proof}

\begin{lemma}\label{lemma:lipschitz:cutfunctionthatavoidsclosedset}
    Let $X$ be a metric space, let $B \subseteq X$ be a closed subset,
    and let $A \subseteq X$ be a closed set with $A \cap B = \emptyset$. 
    If $c : B \times [0,1] \to X$ be a LIP collar of $B$ in $X$,
    then there exists a locally Lipschitz function $d : B \to (0,1]$ such that
    for all $x \in B$ and $t \in [0,d(x)]$ we have $c(x,t) \notin A$.
In particular, 
    \begin{align*}
        h : \overline{U} \times [0,1] \to X, \quad (x,t) \mapsto c(x,t d(x))
    \end{align*}
    is a LIP collar. 
\end{lemma}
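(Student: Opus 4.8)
The topological Lemma~\ref{lemma:cutfunctionthatavoidsclosedset} had to produce its cut function through normality and a locally finite partition of unity; in the metric setting the construction is shorter, since a single distance function does the job and no partition of unity is needed. The plan is: first set $D := c^{-1}(A)$, which is closed in $B \times [0,1]$ because $c$ is continuous and $A$ is closed, and which is disjoint from $B \times \{0\}$ because $c(x,0) = x \in B$ while $A \cap B = \emptyset$. If $D = \emptyset$ we may simply take $d \equiv 1$, so assume $D \neq \emptyset$. Equipping $B \times [0,1]$ with the product metric, I would define $\rho : B \to (0,\infty)$ by $\rho(x) := \dist\bigl((x,0), D\bigr)$; this is finite and positive because $D$ is a nonempty closed set not containing $(x,0)$, and it is $1$-Lipschitz on $B$ since $\dist\bigl((x,0),(y,0)\bigr) = \dist(x,y)$ and the function $p \mapsto \dist(p,D)$ is $1$-Lipschitz.

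Next I would set $d(x) := \min\bigl(1, \tfrac12\rho(x)\bigr)$, so that $d : B \to (0,1]$ is $\tfrac12$-Lipschitz, hence locally Lipschitz. For every $x \in B$ and $t \in [0,d(x)]$ one has $\dist\bigl((x,t),(x,0)\bigr) = t \le \tfrac12\rho(x) < \rho(x) = \dist\bigl((x,0),D\bigr)$, whence $(x,t) \notin D$, i.e. $c(x,t) \notin A$; this proves the main claim. For the final assertion I would apply Lemma~\ref{lemma:continuouscut} with $g = d$ to conclude that $h(x,t) = c(x, t d(x))$ is a collar of $B$ in $X$, so that only the Lipschitz properties remain to be checked. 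Writing $h = c \circ G$ with $G : B \times [0,1] \to B \times [0,1]$, $G(x,t) = (x, t d(x))$, a one-line product estimate (using that $d$ is Lipschitz with values in $[0,1]$ and that $t \in [0,1]$) shows $G$ is Lipschitz, and composing with the locally Lipschitz map $c$ gives that $h$ is locally Lipschitz. On the image of $h$ one has $h^{-1} = G^{-1} \circ c^{-1}$ with $G^{-1}(x,s) = (x, s/d(x))$ defined on $\{(x,s) : 0 \le s \le d(x)\}$; here $c^{-1}$ is locally Lipschitz by hypothesis, and $G^{-1}$ is locally Lipschitz by a short quotient estimate using $s \le 1$, $d \le 1$, and a local positive lower bound on $d$.

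The only step that is not entirely mechanical is the division by $d$ in the estimate for $G^{-1}$: there is in general no uniform positive lower bound for $d$, only the local one coming from its continuity and strict positivity, but this is exactly what the definition of "locally Lipschitz" requires, so no global hypothesis is lost. Everything else is routine: the case $D = \emptyset$ is immediate, and the Lipschitz bookkeeping for $h$ and $G$ reduces to the standard product-type estimates on the bounded metric space $B \times [0,1]$.
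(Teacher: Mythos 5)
Your proof is correct, but it takes a genuinely different route from the paper. The paper's proof simply reruns the topological argument of Lemma~\ref{lemma:cutfunctionthatavoidsclosedset} — building an open cover $\{G_r\}$, invoking Proposition~\ref{proposition:lipschitzpartitionofunity} to make the subordinate partition of unity locally Lipschitz, and setting $d = \sum_r r\,\lambda_r$ — and then declares the final claim about $h$ to be clear. You instead exploit the metric structure directly: with $D := c^{-1}(A)$ closed in $B \times [0,1]$ and disjoint from $B \times \{0\}$, the function $\rho(x) = \dist\bigl((x,0),D\bigr)$ is $1$-Lipschitz and strictly positive, and $d = \min(1,\rho/2)$ does the job. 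This is shorter, avoids the cover/partition-of-unity machinery entirely, and yields a slightly stronger conclusion (a globally $\tfrac12$-Lipschitz $d$, not merely a locally Lipschitz one). You also spell out the verification that $h = c \circ G$ is a LIP embedding — the Lipschitz bound for $G(x,t)=(x,td(x))$ and the local quotient estimate for $G^{-1}(x,s)=(x,s/d(x))$ using a local positive lower bound on $d$ — which is precisely the content the paper's ``clearly'' suppresses. The one cosmetic discrepancy is that the lemma's displayed formula writes the domain of $h$ as $\overline{U} \times [0,1]$, a leftover from Lemma~\ref{lemma:continuouscut}; your reading of it as $B \times [0,1]$ is the intended one.
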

\begin{proof}
    We follow the proof Lemma~\ref{lemma:cutfunctionthatavoidsclosedset};
    and then use Proposition~\ref{proposition:lipschitzpartitionofunity}
    to ensure that the partition of unity in that proof is locally Lipschitz.
    Then $d$ is a locally Lipschitz non-negative function. 
    Clearly, $h$ as above is a LIP collar. 
\end{proof}

\begin{lemma}\label{lemma:lipschitz:collaringdiscrete}
    Let $B$ be a closed subset of the metric space $X$.
    If
    \begin{align*}
        \calC = \left( c_{\alpha} : \overline{U_{\alpha}} \times [0,1] \to X \right)_{\alpha \in \kappa} 
    \end{align*}
    is a family of LIP local collars of $B$ in $X$ such that the family $(U_{\alpha})_{\alpha \in \kappa}$ is discrete, 
    then $B$ has a LIP local collar
    \begin{align*}
        c : \bigcup_{\alpha \in \kappa} \overline{U_\alpha} \times [0,1] \to X.
    \end{align*}
\end{lemma}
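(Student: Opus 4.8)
The plan is to follow the proof of Lemma~\ref{lemma:collaringdiscrete} almost verbatim, substituting the Lipschitz restriction lemma for its topological counterpart, and then to promote the resulting ``local collar'' to a ``LIP local collar'' by a purely local observation exploiting discreteness of $(U_{\alpha})_{\alpha\in\kappa}$. First I would produce the separating data: since every metric space is collectionwise normal and $\left\{\overline{U_{\alpha}}\right\}_{\alpha\in\kappa}$ is a discrete family of sets closed in $X$ (as $B$ is closed in $X$), there is a pairwise disjoint family $(O_{\alpha})_{\alpha\in\kappa}$ of open subsets of $X$ with $\overline{U_{\alpha}}\subseteq O_{\alpha}$; by normality I may shrink to obtain open $O'_{\alpha}$ with $\overline{U_{\alpha}}\subseteq O'_{\alpha}\subseteq\overline{O'_{\alpha}}\subseteq O_{\alpha}$. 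Applying Lemma~\ref{lemma:lipschitz:cutfunctionthatavoidsclosedset} to each $c_{\alpha}$ and the closed set $X\setminus O'_{\alpha}$ (which is disjoint from $\overline{U_{\alpha}}$) yields LIP local collars $c'_{\alpha}:\overline{U_{\alpha}}\times[0,1]\to X$, of the form $(x,t)\mapsto c_{\alpha}(x,t\,d_{\alpha}(x))$ with $d_{\alpha}$ locally Lipschitz, whose images lie in $O'_{\alpha}$, hence whose closures lie in $O_{\alpha}$; each $c'_{\alpha}$ is strong whenever $c_{\alpha}$ is.

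Exactly as in the proof of Lemma~\ref{lemma:collaringdiscrete} — and using no metric structure — disjointness of the $O_{\alpha}$ shows that on $\bigcup_{\alpha}\overline{U_{\alpha}}$ and on $\bigcup_{\alpha}c'_{\alpha}(\overline{U_{\alpha}}\times[0,1])$ the disjoint-union topology agrees with the subspace topology, so $c:=\bigcup_{\alpha}c'_{\alpha}$ is a topological embedding with $c\bigl(\bigcup_{\alpha}U_{\alpha}\times[0,1)\bigr)$ open in $X$; thus $c$ is a (strong, if applicable) local collar of $B$ in $X$. It then remains to check that $c$ is LIP, and this is a local matter. Given $(x,s)$ in the domain, $x$ lies in $\overline{U_{\beta}}$ for a unique $\beta$, and discreteness gives an open $V\ni x$ in $X$ with $V\cap\overline{U_{\alpha}}=\emptyset$ for $\alpha\neq\beta$; hence $(V\cap\overline{U_{\beta}})\times[0,1]$ is a neighborhood of $(x,s)$ in the domain on which $c$ coincides with the LIP map $c'_{\beta}$. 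Conversely, any $p$ in the image belongs to $c'_{\beta}(\overline{U_{\beta}}\times[0,1])$ for a unique $\beta$; since the image of each $c'_{\alpha}$ is contained in $O_{\alpha}$ and the $O_{\alpha}$ are disjoint, $O_{\beta}$ is a neighborhood of $p$ in $X$ meeting only the $\beta$-th image piece, so near $p$ the inverse of $c$ equals $(c'_{\beta})^{-1}$, which is locally Lipschitz. Therefore $c$ and $c^{-1}$ are locally Lipschitz, i.e.\ $c$ is a LIP local collar.

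The only genuine (and mild) obstacle is arranging that the restricted collars $c'_{\alpha}$ are honestly LIP while keeping their images inside $O'_{\alpha}$; this is precisely what Lemma~\ref{lemma:lipschitz:cutfunctionthatavoidsclosedset} provides, its proof of local Lipschitzness of the cut function $d_{\alpha}$ resting on the locally Lipschitz partitions of unity of Proposition~\ref{proposition:lipschitzpartitionofunity}. Beyond that, the argument is either a direct quotation of the topological Lemma~\ref{lemma:collaringdiscrete} or the elementary remark that, for a discrete family, being LIP can be verified one member at a time.
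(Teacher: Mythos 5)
Your proposal is correct and takes essentially the same route as the paper, whose proof of this lemma is literally ``the same proof as for Lemma~\ref{lemma:collaringdiscrete}, except that we invoke Lemma~\ref{lemma:lipschitz:cutfunctionthatavoidsclosedset} instead of Corollary~\ref{corollary:restrictedcollarthatavoidsclosedset}; by construction, the union $c$ is a LIP embedding.'' You merely spell out the local verification (via the disjoint open sets $O_\alpha$) that the joined map and its inverse are locally Lipschitz, a detail the paper leaves implicit.
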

\begin{proof} 
    This is the same proof as for Lemma~\ref{lemma:collaringdiscrete},
    except that we invoke Lemma~\ref{lemma:lipschitz:cutfunctionthatavoidsclosedset}
    instead of Corollary~\ref{corollary:restrictedcollarthatavoidsclosedset}.
    By construction, the union $c$ is a LIP embedding. 
\end{proof}

The following technical observation will be used repeatedly in what follows.
\begin{lemma}
    Let $a_{1},a_{2},c_{1},c_{2} \in \bbR$ with $a_{1} \leq c_{1}$ and $c_{2} \leq a_{2}$. Then 
    \[ |a_{1}-c_{1}| + |c_{2}-a_{2}| \leq |a_{1}-a_{2}| + |c_{1}-c_{2}|. \]
\end{lemma}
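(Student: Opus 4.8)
The inequality is elementary and follows by eliminating the absolute values on the left-hand side using the sign hypotheses, rearranging, and then restoring absolute values on the right-hand side via the trivial bound $x \le |x|$.

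First I would use the assumption $a_1 \le c_1$ to write $|a_1 - c_1| = c_1 - a_1$, and the assumption $c_2 \le a_2$ to write $|c_2 - a_2| = a_2 - c_2$. Adding these gives
\[
    |a_1 - c_1| + |c_2 - a_2| = (c_1 - a_1) + (a_2 - c_2).
\]
The second step is to regroup the four terms on the right as $(a_2 - a_1) + (c_1 - c_2)$; this is just commutativity and associativity of addition in $\bbR$.

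Finally, I would apply the bounds $a_2 - a_1 \le |a_2 - a_1| = |a_1 - a_2|$ and $c_1 - c_2 \le |c_1 - c_2|$, each of which is an instance of $x \le |x|$, and add them to conclude
\[
    (a_2 - a_1) + (c_1 - c_2) \le |a_1 - a_2| + |c_1 - c_2|,
\]
which chains with the previous two displays to yield the claim. There is no real obstacle here: the only point requiring the hypotheses is the first step, where the signs of $a_1 - c_1$ and $c_2 - a_2$ are needed to open the absolute values with the correct orientation so that the cross terms $a_1$ with $a_2$ and $c_1$ with $c_2$ pair up; everything after that is a one-line estimate.
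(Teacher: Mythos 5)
Your proof is correct, and it is even slightly cleaner than what the paper offers: the paper merely asserts ``an easy case distinction,'' whereas your argument avoids cases entirely by opening the two left-hand absolute values with the given sign hypotheses, regrouping, and invoking $x \le |x|$ twice. Nothing to fix.
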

\begin{proof}
    An easy case distinction.
\end{proof}

This section's first main result is an extension of Theorem~\ref{theorem:collaring:locallyfinitecountable}. 
On the one hand, we show that a locally LIP collared closed set is globally LIP collared. 
On the other hand, we prove bi-Lipschitz estimates; 
however, the assumptions needed for that are considerably stronger and are commented on below.

\begin{theorem}\label{theorem:lipschitz:locallyfinitecountable}
    Let $B$ be a closed subset of a metric space $X$.
    Let 
    \begin{align*}
        \calC = \left( c_{i} : \overline{U_{i}} \times [0,1] \to X \right)_{i \in \bbN} 
    \end{align*}
    be a collection of local LIP collars of $B$ in $X$
    such that the family $\calU = (U_{i})_{i \in \bbN}$ is locally finite. 
    Let $\left( \lambda_{i} : B \to [0,1] \right)_{i \in \bbN}$ be a partition of unity subordinate to $\calU$. 
    
    \begin{enumerate}[1., wide=10pt, itemindent=\parindent, leftmargin=0pt, topsep=0pt, itemsep=0pt]
    \item 
    If each $c_{i}$ is a LIP embedding and each $\lambda_{i}$ is locally Lipschitz, 
    then there exists a collar $c : B \times [0,1] \to X$ of $B$ in $X$ 
    that is a LIP embedding. 
    \item
    We have the estimate 
    \begin{align*}
        \Lip\left( h, B \times [0,1] \right) 
        &
        \leq 
C
        ( 1 +   L_{\Sigma} / 2 )
        ( 1 + 2 L_{\Sigma}     )
        \max\left( 1 + 2 L , 1 + \zeta^{-1} \right)^{N}
        C_{b}^{N}
        ,
        \\ 
\LipInv\left( h, B \times [0,1] \right) 
        &\leq
C
        ( 1 + L_{\Sigma} / 2 )
        C_{b}^{2N} 
        \max\left( 
            1 + \frac C \zeta + \frac L 2
        \right)^{N}
        \max\left( 3 + \frac  1 \zeta + 3 L \right)^{N} ,
    \end{align*}
    where 
    \begin{gather*}
        L := \max_{i \in \bbN}\Lip(\lambda_i,U_{i}), 
        \quad 
        L_{\Sigma} := \max_{i \in \bbN}\Lip\left( \lambda_{1}+\lambda_{2}+\cdots+\lambda_{i} \right), 
        \\ 
        C := \max_{ i \in \bbN } \Lip( c_i ), 
        \quad  
        C_b := \max_{ i \in \bbN } \biLip( c_i ),
    \end{gather*}
    where $\zeta > 0$ is such that $\Ball(z,\zeta) \subseteq c_{i}\left( \overline{U_i} \times [0,1] \right)$ 
    for all $z \in c_{i}\left( \supp(\lambda_i) \times [0,0.75] \right)$ and $i \in \bbN$,
    and where $N \in \bbN$ satisfies the following property:
    every $x \in B$ has a relatively open neighborhood ${S}_x \subseteq B$ 
    such that for each $i \in \bbN$ there are at most $N$ different $j \in \bbN$ with $j \leq i$ and 
    \[
        c_{i}( ({S}_x \cap \overline{U_i}) \times [0,1] ) \cap c_{j}( \overline{U_j} \times [0,1] ) \neq \emptyset.
    \]
    \end{enumerate}
\end{theorem}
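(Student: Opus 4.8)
The plan is to re-run the construction in the proof of Theorem~\ref{theorem:collaring:locallyfinitecountable} and to check that, under the present hypotheses, every building block of that construction is a lipeomorphism (for Part~1) and carries an explicitly controllable Lipschitz constant (for Part~2). All the purely topological facts used there --- closedness of the sets $c_{i}(\supp(\lambda_{i})\times[0,0.75])$ via Theorem~\ref{theorem:yinuowangyoumademesmile}, the pasting Lemma~\ref{lemma:pastinglemma}, Lemma~\ref{lemma:joiningrelativeopensets}, and the injectivity and openness arguments --- carry over verbatim; only the qualifier ``Lipschitz'' has to be added, together with the quantitative estimates.

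For Part~1 I would go block by block. Each $\Xi_{i}$ is a lipeomorphism of $\overline{U_{i}}\times[0,1]$: it fixes the first coordinate, is affine in $t$ on each of $\{t<3/4\}$ and $\{t\geq 3/4\}$ with coefficients that are locally Lipschitz in $x$ because $\lambda_{i}$ is, the two pieces agree along $\{t=3/4\}$, and the affine coefficient of $t$ stays in $[1/3,1]$, so the inverse has the same shape; the pasting lemma then gives local Lipschitzness of $\Xi_{i}$ and $\Xi_{i}^{-1}$. Hence each $g_{i}$ is a lipeomorphism of $X$: over the open set $c_{i}(U_{i}\times[0,1))$ it equals $c_{i}\circ\Xi_{i}\circ c_{i}^{-1}$ (with $c_{i}$ a LIP embedding), over the open complement of the closed set $c_{i}(\supp(\lambda_{i})\times[0,0.75])$ it is the identity, and local Lipschitzness glues along an open cover. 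The $q_{i}$ are visibly lipeomorphisms, so the partial collars $h_{i}=G_{i-1}\circ c_{i}\circ q_{i}$ pasted with $h_{i-1}$ are lipeomorphisms onto their images by the very argument of Theorem~\ref{theorem:collaring:locallyfinitecountable}, with ``continuous''/``homeomorphism'' replaced by ``locally Lipschitz''/``lipeomorphism'' and Lemma~\ref{lemma:joiningrelativeopensets} and the pasting lemma replaced by their trivial Lipschitz refinements. Finally, local finiteness of $\calU$ makes $h$ coincide with $h_{i(x)}$ on a relatively open neighbourhood $V_{x}\times[0,1]$ of each point, so $h$ and $h^{-1}$ are locally Lipschitz; hence $h$ is a LIP embedding. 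This part is routine.

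For Part~2 I would make each of these steps effective. The explicit formulas give $\Lip(\Xi_{i})\leq 1+L/2$ and, using the $1/3$ lower bound on the affine coefficient, $\LipInv(\Xi_{i})$ bounded by a quantity of the form $\max(3+3L,\dots)$, while $\Lip(q_{i})\leq 1+L_{\Sigma}/2$ and $\LipInv(q_{i})\leq 1+2L_{\Sigma}$. For $g_{i}$ one needs a local-to-global estimate, since $X$ has no geodesic structure: $g_{i}$ is the identity off the active set $c_{i}(\supp(\lambda_{i})\times[0,0.75])$, it is $C_{b}\Lip(\Xi_{i})$-Lipschitz on the collar image $c_{i}(\overline{U_{i}}\times[0,1])$, which by the choice of $\zeta$ contains the $\zeta$-neighbourhood of the active set, and it displaces every point by at most $3C/4$ (because $\Xi_{i}$ only changes $t$, by at most $3/4$); comparing two points that are either both in the collar image or split so that one lies in the active set and the other outside its $\zeta$-neighbourhood gives $\Lip(g_{i})\leq\max(C_{b}\Lip(\Xi_{i}),\,1+3C/(4\zeta))\leq C_{b}\max(1+2L,\,1+\zeta^{-1})$ up to a universal constant, and symmetrically for $\LipInv(g_{i})$, using that $g_{i}$ maps the collar image onto itself. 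The partial collar $h_{i}$ is bounded by composing $G_{i-1}$, $c_{i}$ and $q_{i}$; pasting $h_{i}$ from its two pieces along the Lipschitz graph $\{t=\lambda_{\Sigma,i-1}(x)\}$ costs only a factor $1+2L_{\Sigma}$, and it is precisely here that the technical lemma preceding the theorem is used, to insert an intermediate point on the graph, and similarly each pasting step for $h_{i}^{-1}$ costs a further $C_{b}$.

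The decisive observation, and the main obstacle, is the $N$-truncation: although $G_{i-1}$ is a composition of arbitrarily many $g_{j}$, the value of $h$ near $x$ only involves the \emph{active} ones. On $V_{x}\times[0,1]$ (with $V_{x}\subseteq S_{x}$ chosen as in the topological proof) the map $h$ equals $h_{i(x)}$, every value of which is of the form $G_{k-1}\circ c_{k}\circ q_{k}$ with $k\in N(x)$; since each $g_{j}$ maps $c_{j}(\overline{U_{j}}\times[0,1])$ onto itself and is the identity elsewhere, the orbit of $c_{k}(\overline{U_{k}}\times[0,1])$ under $G_{k-1}$ stays inside the union of $c_{k}(\overline{U_{k}}\times[0,1])$ with the finitely many $c_{j}(\overline{U_{j}}\times[0,1])$ that interact with it, and the hypothesis on $N$ bounds the number of such $j$ by $N$; hence at most $N$ factors of $G_{k-1}$ act nontrivially, each contributing at most $C_{b}\max(1+2L,\,1+\zeta^{-1})$. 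This produces the $N$-th powers and the $C_{b}^{N}$, respectively $C_{b}^{2N}$, factors; multiplying by the front factors from $q_{i}$, from the pasting, and from $\Lip(c_{i})=C$ gives the stated bound for $\Lip(h,B\times[0,1])$, and the analogous estimate for $h^{-1}$ --- where $\LipInv(\Xi_{j})$, $\LipInv(c_{i})$ and the inverse pastings enter --- gives the bound for $\LipInv(h,B\times[0,1])$. To upgrade these local estimates to a global bound on all of $B\times[0,1]$, one picks, as in the injectivity argument of the topological proof, a single $m$ with $h=h_{m}$ at any two given points, and notes that only $N$ deformations are active on the relevant portion of $Q_{\Sigma,m}$. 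Keeping this bookkeeping honest --- in particular verifying that ``active'' is preserved as the collar images are dragged by earlier deformations --- is the hard part; the remainder is careful but mechanical estimation.
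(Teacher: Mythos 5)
Your proposal is correct and follows essentially the same approach as the paper: re-run the construction of Theorem~\ref{theorem:collaring:locallyfinitecountable}, verify that each building block ($\Xi_i$, $g_i$, $q_i$, and the graph pastings) is a LIP or bi-Lipschitz embedding with explicit constants, use the elementary inequality preceding the theorem to insert a midpoint on the pasting graph, and invoke the $N$-truncation to bound the effective depth of the composition $G_{i-1}$. The paper's inverse estimate is organized slightly more carefully, via the auxiliary maps $\Theta_{k,l}$ and a recursion through the quantity $\vartheta_{j}$, but your sketch captures the same underlying idea.
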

\begin{proof}
    This proof is to be read as a continuation of the proof of Theorem~\ref{theorem:collaring:locallyfinitecountable}.
    \begin{enumerate}[1., wide=10pt, itemindent=\parindent, leftmargin=0pt, topsep=0pt, itemsep=0pt]
        \item The partition of unity is locally Lipschitz by assumption. 
        The mappings 
        \begin{align*}
            q_i : Q_{\Sigma,i} \setminus O_{\Sigma,i-1} \to Q_{i,\mytent}, 
            \quad 
            (x,t) \mapsto \left( x, ( t - \lambda_{\Sigma,i-1}(x) ) / 2 \right)
        \end{align*}
        are lipeomorphisms. 
        Elementary calculations show that $\Xi_i$ is lipeomorphism.
        Clearly, we have $\dist( (x,t), \Xi_i(x,t) ) \leq 1$ 
        for all $(x,t) \in \overline{U_i} \times [0,1]$. 
        
        If, additionally, the partition of unity functions are Lipschitz, 
        then $q_{i}$ and $\Xi_{i}$ are bi-Lipschitz, and we have 
\begin{gather*}
            \max\left(
                \LipInv( q_i )
                ,
                \Lip   ( q_i )
            \right)
            \leq
            1 + \frac 1 2 \sum_{j=1}^{i-1} \Lip( \lambda_j )
            ,
            \\
\Lip( \Xi_i )
            \leq
            1 + 2 \Lip(\lambda_i,U_{i})
            ,
            \quad 
            \LipInv( \Xi_i )
            \leq
            3 + 3 \Lip(\lambda_i,U_{i})
            .
        \end{gather*}

        \item 
        By assumption, $c_i$ is a LIP embedding, and so is $g_i$. 
        
        If, additionally, the partition of unity is Lipschitz and $c_i$ is bi-Lipschitz, 
        then $g_i$ is bi-Lipschitz, as we now show.
        Write $W_i := c_i\left( \overline{U_i} \times [0,1] \right)$.
        We know that $g_i$ is the identity outside of $W_i$,
        and that 
        \begin{align*}
            \Lip(g_i,W_i) 
            \leq 
            \Lip(\Xi_i) 
            \biLip \left( c_i \right), 
            \quad 
            \LipInv(g_i,W_i) 
            \leq 
            \LipInv(\Xi_i) 
            \biLip \left( c_i \right) 
            .
        \end{align*}
        Now suppose that $z,w \in X$ such that $z \notin W_i$ and $g(w) \neq w$. 
        We make the additional assumption that there exists $\zeta_{i} > 0$ such that 
        \begin{align*}
            \Ball\left( c_{i}\left( \supp(\lambda_{i}) \times [0,0.75] \right), \zeta_{i} \right) \subseteq W_{i}.
        \end{align*}
        Thus, knowing $g_{i}(z) = z$ and $w, g_{i}(w) \in c_{i}\left( \supp(\lambda_{i}) \times [0,0.75] \right)$, we find 
        \begin{align*}
            \dist(z,w) \geq \zeta_i
            ,
            \quad 
            \dist(g_{i}(z),g_{i}(w)) \geq \zeta_i
            .
        \end{align*}
        Clearly, $\dist( w, g_i(w) ) \leq \biLip(c_{i})$.
        Consequently, 
        \begin{align*}
            \dist( g_i(z), g_i(w) )
            &\leq 
            \dist( z, w ) + \dist( w, g_i(w) )
            \\&\leq 
            \dist( z, w ) + \zeta_i^{-1} \biLip(c_i) \dist( z, w )
            ,
            \\
            \dist( z, w )
            &\leq 
            \dist( g_i(z), g_i(w) ) + \dist( w, g_i(w) )
            \\&\leq 
            \dist( g_i(z), g_i(w) ) + \zeta_i^{-1} \biLip(c_i) \dist( g_i(z), g_i(w) )
            .
        \end{align*}
        This establishes the bi-Lipschitz estimates for $g_i$:
        \begin{align*}
            \Lip(g_i) 
            \leq 
            \max\left( 1 + \zeta^{-1} \biLip\left( c_i \right), \Lip(\Xi_i) \biLip\left( c_i \right) \right)
            \leq 
            \left(1+\zeta^{-1},\Lip(\Xi_i)\right)
            \biLip \left( c_i \right)
            , 
            \\ 
            \LipInv(g_i) 
            \leq 
            \max\left( 1 + \zeta^{-1} \biLip\left( c_i \right), \LipInv(\Xi_i) \biLip\left( c_i \right) \right)
            \leq 
            \left(1+\zeta^{-1},\LipInv(\Xi_i)\right)
            \biLip\left( c_i \right) 
            .
        \end{align*}

        \item 
        Before we begin with the main part of the proof, 
        we establish another series of auxiliary inequalities. 
        If ${z} \in \overline{U_{i}}$ and $\rho > 0$ are such that $\lambda_{i}$ is Lipschitz over $\Ball({z},\rho) \cap \overline{U_{i}}$, 
        consider any $x,y \in \overline{U_{i}}$ and $s, t \in [0,1]$ with $s \leq \lambda_{i}(x)/2$ and $\lambda_{i}(y)/2 \leq t$.
        One easily checks that 
        \begin{align*}
            &
            \left| s - \lambda_{i}(x)/2 \right| + \left| \lambda_{i}(y)/2 - t \right|
            \leq 
            \left| s - t \right| + \frac 1 2 \left| \lambda_{i}(x) - \lambda_{i}(y) \right|
            .
        \end{align*}
        With that, we estimate 
        \begin{align*}
                \dist( (x,s), (y,t) )
                &
                \leq
                \dist\left( (x,s), (x,\lambda_{i}(x)/2) \right)
                +
                \dist\left( (x,\lambda_{i}(x)/2), (y,t) \right)
                \\& 
                =
                \dist(x,y)
                +
                | t - \lambda_{i}(x)/2 |
                +
                | \lambda_{i}(x)/2 - s |
                \\& 
                \leq 
                \dist(x,y)
                +
                | t - s |
                +
                \frac 1 2 
                | \lambda_{i}(y) - \lambda_{i}(x) |
                \\& 
                \leq 
                \left( 1 + \frac 1 2 \Lip\left( \lambda_{i},\Ball(z,\rho) \cap \overline{U_{i}} \right) \right) \dist( (x,s), (y,t) )
                .
        \end{align*}
If, additionally, $\lambda_{i}$ is Lipschitz, 
        then we can choose $\rho = \infty$,
        and then that inequality applies for all $x,y \in \overline{U_{i}}$.
        
        \item 
        This translates to an inequality within the image of the collar. 
        Since $g_{i}$ is a LIP embedding,
        for every $z \in \overline U_{i}$ there exists $\rho > 0$ and $\epsilon > 0$
        such that for all $x,y \in \Ball(z,\rho) \cap \overline U_{i}$ and $s,t \in [0,1]$ with 
        \begin{align*}
            \lambda_{i}(z)/2 - \epsilon 
            \leq
            s
            \leq
            \lambda_{i}(z)/2 
            \leq
            t
            \leq
            \lambda_{i}(z)/2 + \epsilon 
        \end{align*}
        we have, writing $V := (\Ball(z,\rho) \cap \overline U_{i}) \times [\lambda_{i}(z)/2 - \epsilon,\lambda_{i}(z)/2 + \epsilon ]$, the inequality
        \begin{align*}
                &
                \dist\left( c_{i}(x,s), c_{i}(x,\lambda_{i}(x)/2) \right)
                +
                \dist\left( c_{i}(x,\lambda_{i}(x)/2), g_{i}(c_{i}(y,t)) \right)
                \\&\; 
                \leq 
                \biLip\left(c_{i}, V \right)
                \left( 1 + \frac 1 2 \Lip\left( \lambda_{i},\Ball(z,\rho) \cap \overline{U_{i}} \right) \right)
                \dist\left( c_{i}(x,s), g_{i}(c_{i}(y,t)) \right)
                .
        \end{align*}
        If, additionally, $\lambda_{i}$ is Lipschitz and $c_{i}$ is bi-Lipschitz, 
        so that $g_{i}$ is bi-Lipschitz as well, 
        then the previous inequality holds for any $x,y \in \overline U_{i}$ and $s,t \in [0,1]$.
        Now consider any $z \in X$ and $(x,s) \in \overline U_{i} \times [0,1]$ with $0 \leq s \leq \lambda_{i}(x)$.
        If $z$ is in the image of $c_{i}$, then so is $g_{i}(z)$, and 
        \begin{align*}
            &
            \dist\left( c_{i}(x,s), c_{i}(x,\lambda_{i}(x)/2) \right)
            +
            \dist\left( c_{i}(x,\lambda_{i}(x)/2), g_{i}(z) \right)
            \\&\; 
            \leq 
            \biLip\left(c_{i}\right) \left( 1 + \frac 1 2 \Lip(\lambda_{i},\overline U_{i}) \right)
            \dist\left( c_{i}(x,s), g_{i}(z) \right)
            .
        \end{align*}
        Otherwise, suppose that $\zeta_{i} > 0$ is such that $\Ball( c_{i}(x,s), \zeta_{i} )$ is contained in the image of $c_{i}$.
        We easily estimate 
        \begin{align*}
            \dist\left( c_{i}(x,s), c_{i}(x,\lambda_{i}(x)/2) \right)
            &
            \leq 
            \Lip(c_{i}) \left| \frac{ \lambda_{i}(x) }{2} - s \right|
\leq 
            \frac{ \Lip(c_{i}) }{2}
            \zeta_{i}^{-1}
            \dist\left( c_{i}(x,s), g_{i}(z) \right)
            ,
        \end{align*}
        which leads to 
        \begin{align*}
            &
            \dist\left( c_{i}(x,s), c_{i}(x,\lambda_{i}(x)/2) \right)
            +
            \dist\left( c_{i}(x,\lambda_{i}(x)/2), g_{i}(z) \right)
            \\&\; 
            \leq 
            2
            \dist\left( c_{i}(x,s), c_{i}(x,\lambda_{i}(x)/2) \right)
            +
            \dist\left( c_{i}(x,s), g_{i}(z) \right)
            \\&\; 
            \leq 
\left( 1 + \zeta_{i}^{-1} \Lip(c_{i}) \right)
            \dist\left( c_{i}(x,s), g_{i}(z) \right)
            .
        \end{align*}
        To summarize, writing 
        \[
            \vartheta_{i} :=             \max\left(
                1 + \zeta_{i}^{-1} \Lip(c_{i})
                ,
                \biLip\left(c_{i}\right) \left( 1 + \niceonehalf \Lip(\lambda_{i}) \right)
            \right),
        \]
        we have derived the auxiliary inequality  
        \begin{align*}
            &
            \dist\left( c_{i}(x,s), c_{i}(x,\lambda_{i}(x)/2) \right)
            +
            \dist\left( c_{i}(x,\lambda_{i}(x)/2), g_{i}(z) \right)
\leq 
            \vartheta_{i}
            \dist\left( c_{i}(x,s), g_{i}(z) \right)
            .
        \end{align*}

        \item 
        We begin with the main part of the proof. 
        We recall that $h$ equals $g_{1} \circ \cdots \circ g_{i-1} \circ c_{i} \circ q_{i}$ 
        over the subspace $Q_{\Sigma,i} \setminus O_{\Sigma,i-1}$.
        In particular, whenever $(z,r) \in O_{\Sigma,i} \setminus Q_{\Sigma,i-1}$,
        then $h$ is bi-Lipschitz over an open neighborhood of $(z,r)$. 
        
        If, additionally, the collars are uniformly bi-Lipschitz and the partition of unity functions are uniformly Lipschitz, then $h$ is bi-Lipschitz over $Q_{\Sigma,i} \setminus O_{\Sigma,i-1}$.

        \item 
        We now see that it remains to consider the case where $(z,r) \in B \times [0,1]$ lies on the graph of one of the functions $\lambda_{\Sigma,i}$. 
        We choose $\beta \in (0,\infty]$ and a relatively open interval $I \subseteq [0,1]$ 
        that contains $r$ such that the following conditions hold:
        \begin{itemize}
            \item
            All the partition of unity functions are Lipschitz over $\Ball(z,\beta)$. 
            This is possible because the partition of unity functions are locally Lipschitz and their supports are a locally finite cover of $B$. 
            \item
            The graph of each $\lambda_{\Sigma,j}$, $j \in \bbN$, 
            over ${S} \cap \Ball(z,\beta)$ is either disjoint from $\Ball(z,\beta) \times I$
            or a subset of that set. 
            This is possible because the partition of unity is locally finite and locally Lipschitz.
            \item
            $h$ is bi-Lipschitz over $V \cap Q_{\Sigma,j} \setminus O_{\Sigma,j-1}$, $j \in \bbN$.
            This is possible because $h$ is a LIP embedding over $Q_{\Sigma,j} \setminus O_{\Sigma,j-1}$, $j \in \bbN$, and the partition of unity is locally finite.
        \end{itemize}
        If, additionally,
        the collars are bi-Lipschitz and the partition of unity functions are Lipschitz,
        then we can choose $\beta = 0$ and $I = [0,1]$.
        
        \item Let ${S} \subseteq B$ be a relatively open neighborhood of $z$ and let $N_{S} \in \bbN$ satisfy the following condition:
        ${S}$ intersects only finitely many members of $\calU$
        and 
        for every $i \in \bbN$ and $x \in S \cap U_{i}$, there are at most $N_{S}$ different $j \in \bbN$ with $j \leq i$ for which 
        \[
            c_{i}( \{x\} \times [0,1] ) \cap c_{j}( U_j \times [0,1] ) \neq \emptyset.
        \]
        Note that every $z \in B$ has a neighborhood ${S}$ such the condition is satisfied for some sufficiently large choice of $N_{S} \in \bbN$. 
        Indeed, since $\calU$ is locally finite and countable, we can choose ${S} = U_{i}$ with $i$ maximal; in that case, $N_{S} = i$.

        \item 
        With $\rho > 0$ and $I \subseteq [0,1]$ as well as $S$ as above, 
        we let $V$ be the following open neighborhood of $(z,r)$:
        \[
            V = ( {S} \cap \Ball(z,\beta) ) \times I.
        \]
        Since $S$ intersects only finitely many members of $\calU$, 
        there exists a maximal $m \in \bbN_0$ such that $\Ball(z,\beta) \times I$ 
        lies in the hypergraph of $\lambda_{\Sigma,m}$.
        Analogously, 
        there exists a minimal $n \in \bbN$ such that $\Ball(z,\beta) \times I$ 
        lies in the hypograph of $\lambda_{\Sigma,n}$.
        
        \item 
        We now address the (local) Lipschitz estimates of $h$.
        Let $(x,s), (y,t) \in V$. 
        If there exists $j \in \bbN_{0}$ such that $(x,s), (y,t) \in V \cap Q_{\Sigma,j} \setminus O_{\Sigma,j-1}$,
        then 
        \begin{align*}
            \dist\left( h(x,s), h(y,t) \right) 
            \leq 
            \dist\left( (x,s), (y,t) \right) 
            \Lip\left( h, V \cap Q_{\Sigma,j} \setminus O_{\Sigma,j-1} \right) 
            .
        \end{align*}
        If no such $j$ exists, then there exists $j \in \bbN_{0}$ such that,
        without loss of generality, $(x,s)$ is below or on the graph of $\lambda_{\Sigma,j}$ 
        and $(y,t)$ is above or on that graph. 
        Now,
        \begin{align*}
            \dist\left( h(x,s), h(y,t) \right) 
            &\leq  
            \dist\left( h(x,s), h(x,\lambda_{\Sigma,j}(x)) \right) 
            \\&\qquad
            +
            \dist\left( h(x,\lambda_{\Sigma,j}(x)), h(y,\lambda_{\Sigma,j}(y)) \right) 
            \\&\qquad
            + 
            \dist\left( h(y,\lambda_{\Sigma,j}(y)), h(y,t) \right) 
            .
        \end{align*}
        The straight line from $(x,s)$ to $(x,\lambda_{\Sigma,j}(x))$ lies within $V$ 
        and crosses only finitely many graphs of the partition of unity,
        and the same is true for the straight line segment from $(y,t)$ to $(y,\lambda_{\Sigma,j}(y))$.
        Hence
        \begin{align*}
            \dist\left( h(x,s), h(x,\lambda_{\Sigma,j}(x)) \right) 
            \leq 
            \left| \lambda_{\Sigma,j}(x) - s \right|
            \max_{ 1 \leq l \leq j } 
            \Lip\left( h, V \cap Q_{\Sigma,l} \setminus O_{\Sigma,l-1} \right)
            \\
            \dist\left( h(y,\lambda_{\Sigma,j}(y)), h(y,t) \right) 
            \leq 
            \left| t - \lambda_{\Sigma,j}(y) \right|
            \max_{ j \leq l \leq n } 
            \Lip\left( h, V \cap Q_{\Sigma,l} \setminus O_{\Sigma,l-1} \right) 
            .
        \end{align*}
        We notice that  
        \begin{align*}
            &
            \left| s - \lambda_{\Sigma,j}(x) \right| + \left| \lambda_{\Sigma,j}(y) - t \right|
            \leq 
            \left| s - t \right| + \left| \lambda_{\Sigma,j}(x) - \lambda_{\Sigma,j}(y) \right|
            .
        \end{align*}
        Since $(x,\lambda_{\Sigma,j}(x))$ and $(y,\lambda_{\Sigma,j}(y))$ are on the graph of $\lambda_{\Sigma,j}$,
        we easily estimate 
        \begin{align*}
            &
            \dist\left( h(x,\lambda_{\Sigma,j}(x)), h(y,\lambda_{\Sigma,j}(y)) \right)
            \\&\; 
            \leq 
            \dist\left( (x,\lambda_{\Sigma,j}(x)), (y,\lambda_{\Sigma,j}(y)) \right)
            \Lip\left( h_{j}, V \cap Q_{\Sigma,j} \setminus O_{\Sigma,j-1} \right)
            ,
        \end{align*}
        and subsequently take the upper bound 
        \begin{align*}
            \dist\left( (x,\lambda_{\Sigma,j}(x)), (y,\lambda_{\Sigma,j}(y)) \right)
            &
            \leq 
            \dist\left( x, y \right)
            +
            \left| \lambda_{\Sigma,j}(x) - \lambda_{\Sigma,j}(y) \right|
            \\&
            \leq 
            \left( 1 + \Lip( \lambda_{\Sigma,j}, \Ball(z,\beta) ) \right)
            \dist\left( x, y \right)
            .
        \end{align*}
We have thus estimated the local Lipschitz constant of $h$ itself.

        \item 
        We want to show that the inverse of $h$ (from its image) is locally Lipschitz or even Lipschitz, 
        Recall that we assume $V$ to be in the hypergraph of $\lambda_{\Sigma,m}$ and the hypograph of $\lambda_{\Sigma,n}$, as explained above. 
        
        For $k, l \in \bbN_{0}$ with $k < l$ we define 
        \begin{align*}
            Q_{\Sigma,k,l} 
            := 
            \left\{ 
                (x,t) \in B \times [0,1]
            \suchthat* 
                x \in \Ball(z,\beta) \cap {S}, \; \lambda_{\Sigma,k}(x) \leq t \leq \lambda_{\Sigma,l}(x) 
            \right\}
            .
        \end{align*}
        It suffices to show that $h$ is a LIP embedding over $Q_{\Sigma,m,n}$,
        because our assumptions on $m$ and $n$ already mean that 
        \begin{align*}
            V \subseteq Q_{\Sigma,m,n}.
        \end{align*}
        We define mappings $\Theta_{k,l} : Q_{\Sigma,k,l} \to X$ via
        \begin{align*}
            \Theta_{k,l}(x,t) := \left( g_{1} g_{2} \dots g_{k} \right)^{\inv} \circ h(x,t),
            \quad
            (x,t) \in Q_{\Sigma,k,l}.
        \end{align*}
        In other words, 
        \begin{align*}
            \Theta_{k,l}(x,t) := g_{k+1} \dots g_{j-2} g_{j-1} \circ c_{j} q_{j}(x,t),
            \quad 
            (x,t) \in Q_{\Sigma,j} \setminus O_{\Sigma,j-1},
            \quad 
            k < j \leq l.
        \end{align*}
        In particular, $\Theta_{k,l}$ is an embedding.

        We develop a recursive estimate for the mappings $\Theta_{k,l}$.
        We start with observing that all $(x,s), (y,t) \in Q_{\Sigma,n-1,n}$ satisfy 
        \begin{align*}
            \dist\left( \Theta_{n-1,n}(x,s), \Theta_{n-1,n}(y,t) \right)
            &
            = 
            \dist\left( c_{n} {} q_{n}(x,s), c_{n} {} q_{n}(y,t) \right)
            \\&
            \geq 
            \LipInv( c_{n} {} q_{n}, V \cap Q_{\Sigma,n-1,n} )^{-1}
            \dist\left( (x,s), (y,t) \right)
            \\&
            \geq 
            \LipInv( c_{n} {} q_{n}, V \cap Q_{\Sigma,n} \setminus O_{\Sigma,n-1} )^{-1}
            \dist\left( (x,s), (y,t) \right)
            .
        \end{align*}
        Let now $j \in \bbN$ and $m < j < n$. 
On the one hand, for all $(x,s), (y,t) \in Q_{\Sigma,j,n}$, 
        \begin{align*}
\dist\left( \Theta_{j-1,n}(x,s), \Theta_{j-1,n}(y,t) \right)
&
            =
            \dist\left( g_{j} {} \Theta_{j,n}(x,s), g_{j} {} \Theta_{j,n}(y,t) \right)
            \\&\; 
            \geq 
            \LipInv( g_{j} \Theta_{j,n}, V \cap Q_{\Sigma,j,n} )^{-1}
            \dist\left( (x,s), (y,t) \right)
.
        \end{align*}
        On the other hand, for all $(x,s), (y,t) \in Q_{\Sigma,j-1,j}$, 
        \begin{align*}
            \dist\left( \Theta_{j-1,n}(x,s), \Theta_{j-1,n}(y,t) \right)
            &=
            \dist\left( c_{j} {} q_{j}(x,s), c_{j} {} q_{j}(y,t) \right)
            \\&
            \geq 
            \LipInv( c_{j} {} q_{j}, V \cap Q_{\Sigma,j-1,j} )^{-1}
            \dist\left( (x,s), (y,t) \right)
            \\&
            \geq 
            \LipInv( c_{j} {} q_{j}, V \cap Q_{\Sigma,j} \setminus O_{\Sigma,j-1} )^{-1}
            \dist\left( (x,s), (y,t) \right)
            .
        \end{align*}
        Finally, 
        if $(x,s) \in Q_{\Sigma,j,n}$ 
        and $(y,t) \in Q_{\Sigma,j-1,j}$,
        then 
        \begin{align*}
            \dist\left( \Theta_{j-1,n}(x,s), \Theta_{j-1,n}(y,t) \right)
            &=
            \dist\left( g_{j} {} \Theta_{j,n}(x,s), c_{j} {} q_{j}(y,t) \right)
            ,
            \\
            g_{j} {} \Theta_{j,n}(y,\lambda_{\Sigma,j}(y))
            &=
            c_{j} {} q_{j}(y,\lambda_{\Sigma,j}(y))
            ,
        \end{align*}
        and 
        \begin{align*}
            &
            \vartheta_{j}
            \dist\left( g_{j} {} \Theta_{j,n}(x,s), c_{j} {} q_{j}(y,t) \right)
            \\
            &\;
            \geq 
            \dist\left( g_{j} {} \Theta_{j,n}(x,s), g_{j} {} \Theta_{j,n}(y,\lambda_{\Sigma,j}(y)) \right)
            +
            \dist\left( c_{j} {} q_{j}(y,\lambda_{\Sigma,j}(y)), c_{j} {} q_{j}(y,t) \right)
.
        \end{align*}
        We see that the other two cases mentioned above deliver the required estimate. 
In summary, we have shown the recursive estimate 
        \begin{align*}
            &
            \vartheta_{j} 
            \dist\left( \Theta_{j-1,n}(x,s), \Theta_{j-1,n}(y,t) \right)
            \\&\; 
            \geq 
            \max\left( 
                \LipInv( g_{j} {} \Theta_{j,n}, V \cap Q_{\Sigma,j,n} )
                ,
                \LipInv( c_{j} {} q_{j}, V \cap Q_{\Sigma,j,j-1} )
            \right)^{-1}
            \dist\left( (x,s), (y,t) \right)
            .
        \end{align*}
        After a finite number of iterations, 
        the inequalities show that $\Theta_{m,n} : Q_{\Sigma,m,n} \to X$ is a LIP embedding. 
        Hence $h$ restricted to $Q_{\Sigma,m,n}$ is a LIP embedding, 
        and so is the restriction of $h$ to the open set $V \subseteq Q_{\Sigma,m,n}$.
        In particular, $h$ restricted $Q_{\Sigma,0,i} = Q_{\Sigma,i}$, which equals $h_{i}$, is a LIP embedding. 
        Consequently, $h$ is a LIP embedding. 
         
        \item 
        Consider the special case that the local collars $c_{i}$ are uniformly bi-Lipschitz
        and the partition of unity functions $\lambda_{i}$ are uniformly Lipschitz. 
        Using the definition of $h$, 
        we easily estimate 
        \begin{align*}
            \Lip   \left( h, V \cap Q_{\Sigma,i} \setminus O_{\Sigma,i-1} \right) 
&
            \leq 
            \Lip\left( c_i \right)
            \Lip\left( q_i \right)
            \left( \max_{ j \leq i } \Lip   \left( g_j \right) \right)^{N}
            ,
            \\ 
            \LipInv\left( h, V \cap Q_{\Sigma,i} \setminus O_{\Sigma,i-1} \right) 
&
            \leq  
            \LipInv\left( c_i \right)
            \LipInv\left( q_i \right)
            \left( \max_{ j \leq i } \LipInv\left( g_j \right) \right)^{N}
            . 
        \end{align*}
        Then we can choose $\beta = \infty$ and $I = [0,1]$,
        and thus can assume $\Ball(z,\rho) = B \times [0,1]$ in the discussion above.
        Given $(x,s), (y,t) \in B \times [0,1]$, 
        we pick the open neighborhoods ${S}_x$ and ${S}_y$ of $x$ and $y$, respectively, and set ${S} := {S}_x \cup {S}_y$.

        We now verify the Lipschitz inequalities for the pair $(x,s)$ and $(y,t)$,
        applying the uniform upper bounds and some elementary estimates. 
        We conclude that $c$ is bi-Lipschitz over $B \times [0,1]$.
    \end{enumerate}
    The proof is complete. 
\end{proof}

\begin{remark}
    A continuous function that is Lipschitz on two subsets of a metric space need not be Lipschitz on the union of the two subspaces. 
    The preceding proof has implicitly used the proof idea of Theorem~2.22 in~\cite[Theorem~2.22]{luukkainen1977elements}.
\end{remark}
\begin{remark}
    Theorem~\ref{theorem:lipschitz:locallyfinitecountable} assumes that the collar collection is countable and therefore extends Theorem~\ref{theorem:collaring:locallyfinitecountable}.
    Alternatively, we could have extended Theorem~\ref{theorem:collaring:locallyfinitegeneralized},
    no longer assuming that the collar collection is countable 
    but that the family of images of the local collars satisfies a weaker form of star-finiteness.
    The resulting differences in the proof are mostly technical. 
    The additional conditions for the global collar to be bi-Lipschitz remain the same. 
    Lastly, if the subspace $B$ is compact, then we always assume that collar cover has only finitely many, say, $N$ distinct non-empty members.
\end{remark}

\begin{theorem}[LIP collar theorem]\label{theorem:lipschitz:main}
    Let $B$ be a closed subset of a metric space $X$.
    If $B$ is locally LIP collared, then $B$ is LIP collared.
\end{theorem}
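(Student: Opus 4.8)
The plan is to run the proof of Theorem~\ref{theorem:collaring:paracompactinambient} again, replacing each topological ingredient by the Lipschitz counterpart developed earlier in this section. Since $B$ is locally LIP collared, it carries a collar cover $\calC = \left( c_{\alpha} : \overline{U_{\alpha}} \times [0,1] \to X \right)_{\alpha \in \kappa}$ whose members are LIP local collars, with associated relatively open cover $\calU = (U_{\alpha})_{\alpha \in \kappa}$ of $B$. Because $X$ is a metric space, it is paracompact and Hausdorff, and the closed subset $B$ is paracompact in $X$; hence Proposition~\ref{proposition:paracompactimpliessigmadiscrete}, together with the final locally-finite extraction in its proof, furnishes a locally finite refinement $\calU = \bigcup_{i \in \bbN} \calU_{i}$ in which each $\calU_{i} = (U_{i,\alpha})_{\alpha \in \kappa}$ is a relatively open discrete family with $U_{i,\alpha} \subseteq U_{\alpha}$ for all $i \in \bbN$ and $\alpha \in \kappa$.

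Next I would collar each discrete layer. Putting $U_{i} := \bigcup_{\alpha\in\kappa} U_{i,\alpha}$, so that $\overline{U_{i}} = \bigcup_{\alpha\in\kappa}\overline{U_{i,\alpha}}$, Lemma~\ref{lemma:lipschitz:collaringdiscrete} produces a LIP local collar $c_{i} : \overline{U_{i}}\times[0,1] \to X$ for each $i \in \bbN$: discreteness of $\calU_{i}$ together with the fact that a metric space is collectionwise normal lets us separate the bases by pairwise disjoint open sets, after which the restricted local collars supplied by Lemma~\ref{lemma:lipschitz:cutfunctionthatavoidsclosedset} glue into a single LIP embedding exactly as in Lemma~\ref{lemma:collaringdiscrete}. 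The family $(U_{i})_{i\in\bbN}$ is then a countable, locally finite, relatively open cover of $B$.

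Finally I would reduce to the countable locally finite case. Since $X$ is metric, Proposition~\ref{proposition:lipschitzpartitionofunity} provides a locally Lipschitz partition of unity $(\lambda_{i} : B \to [0,1])_{i\in\bbN}$ subordinate to $(U_{i})_{i\in\bbN}$. Applying part~1 of Theorem~\ref{theorem:lipschitz:locallyfinitecountable} to the collection $(c_{i})_{i\in\bbN}$ and this partition of unity yields a collar $c : B \times [0,1] \to X$ of $B$ in $X$ that is a LIP embedding, which is the assertion.

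The main obstacle is essentially bookkeeping: one must check that passing from Stone's $\sigma$-discrete refinement to the countable family $(c_{i})_{i\in\bbN}$ preserves the LIP structure, i.e.\ that Lemma~\ref{lemma:lipschitz:collaringdiscrete} genuinely applies to each layer $\calU_{i}$. This is exactly the point at which one uses that being locally Lipschitz is a local property and that a discrete family can be spread into disjoint open neighborhoods; no uniform estimates are needed, since we only claim that $c$ is a LIP embedding, not a bi-Lipschitz one, so the quantitative part~2 of Theorem~\ref{theorem:lipschitz:locallyfinitecountable} is not invoked.
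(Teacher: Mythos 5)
Your proposal is correct and follows essentially the same route as the paper: Stone's $\sigma$-discrete locally finite refinement, Lemma~\ref{lemma:lipschitz:collaringdiscrete} to collar each discrete layer, and then part~1 of Theorem~\ref{theorem:lipschitz:locallyfinitecountable} applied to the resulting countable locally finite collar cover. Your explicit appeal to Proposition~\ref{proposition:lipschitzpartitionofunity} for the locally Lipschitz partition of unity is a detail the paper leaves implicit, but it is exactly the right justification.
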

\begin{proof}
    Since $B$ is locally LIP collared in $X$,
    there exists a collection of sets $\calU = ( U_{\alpha} )_{\alpha \in \kappa}$ 
    and a collection of local LIP collars 
    \[
        c_\alpha : \overline{U_\alpha} \times [0,1] \to X, \quad \alpha \in \kappa. 
    \]
    Since $B$ is paracompact in $X$,
    there exists a locally finite cover $\calU = \bigcup_{i\in\bbN} \calU_{i}$,
    where $\calU_{i} = ( U_{i,\alpha} )_{\alpha \in \kappa}$ is a relatively open family such that
    for all $i \in \bbN$ and $\alpha \in \kappa$ we have the inclusion $U_{i,\alpha} \subseteq U_\alpha$. 
We set $U_{i} := \cup_{\alpha \in \kappa} U_{i,\alpha}$.
    Notably, $\overline{ U_{i} } = \cup_{\alpha\in\kappa} \overline{U_{i,\alpha}}$. 
    We now use Lemma~\ref{lemma:lipschitz:collaringdiscrete} to show the existence of local LIP collars 
    \begin{align*}
        c_{i} : \overline{U_{i}} \times [0,1] \to X.
    \end{align*}
    Moreover, the family $(U_{i})_{i \in \bbN}$ is locally finite and countable. 
    Using Theorem~\ref{theorem:lipschitz:locallyfinitecountable},
    we conclude that $B$ is LIP collared in $X$.
\end{proof}

\begin{remark}
    Luukkainen and V\"ais\"al\"a~\cite[Definition~7.2]{luukkainen1977elements} use a slightly different definition of collars: 
    given any subset $B$ of a metric space $X$, 
    they define a collar of $B$ in $X$ is a LIP embedding $c : B \times [0,1) \to X$ whose image is an open neighborhood of $B$ 
    and which satisfies $c(x,0)=x$ for all $x \in B$.
    They do not require the base set $B$ to be closed, 
    and therefore their LIP version of the collar theorem is not entirely included in our statement.
\end{remark}

\begin{remark}
    We remark on the additional conditions in Theorem~\ref{theorem:lipschitz:locallyfinitecountable}
    that enable estimates of the global bi-Lipschitz constants. 
    It is intuitive that we need uniform estimates on the local collars $c_{i}$.
    The uniform estimates on the Lipschitz constants in the partition of unity 
    reflect the requirement that the bases of the local collars are ``uniformly well-shaped''. 
    The existence of the constant $\zeta_{i}$ reflects a global property of the collection of local collars:
    parts of the metric space, even the image of other local collars, might be arbitrary close to any point within a local collar
    while being disjoint from that local collar. 
    Recall that in a metric space, balls of any radius may not even be connected.
    Lastly, the constant $N$ reflects a uniform local finiteness of the collar collection. 
\end{remark}

\begin{remark}
    We have described how to construct a LIP collar starting from any collection of LIP collars 
    whose bases constitute a cover of $B$. 
    By contrast, the LIP collar estimates are only available for countable collections of collars 
    that satisfy uniform estimates. It is an intriguing question under what conditions, if any, 
    the proof can be generalized to arbitrary collar covers that satisfy uniform estimates. 
\end{remark}

Having discussed one-sided collars, we address the construction of LIP bicollars. 
We begin with the following observation.

\begin{lemma}
    Let $B$ be a closed subset of a metric space $X$ that is two-sided in an open neighborhood $S$. 
    If $B$ is locally LIP bicollared, then $B$ is closed and locally LIP collared in $B \cup S^{+}$ and $B \cup S^{-}$.
\end{lemma}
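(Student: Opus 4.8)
The plan is to run through the proof of Lemma~\ref{lemma:twosidedlocallybicollaredimplieslocallycollared} essentially verbatim, checking at each step that the operations used there preserve the LIP property. Those operations are: restricting a LIP embedding to a subspace, precomposing with an isometric reflection, reparametrizing by a locally Lipschitz cut function, and pasting on disjoint \emph{open} sets — each of which manifestly keeps both the map and its inverse locally Lipschitz. The only place where the topological argument invokes a genuine construction is the cutting step, and there I would substitute the Lipschitz cutting lemma (Lemma~\ref{lemma:lipschitz:cutfunctionthatavoidsclosedset}) for Corollary~\ref{corollary:restrictedcollarthatavoidsclosedset}. The fact that $S_{0}^{+} := B \cup S^{+}$ and $S_{0}^{-} := B \cup S^{-}$ are closed in $S$, and that $B$ is closed in each of them, is purely point-set and transfers unchanged.

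Concretely, starting from a collection $\calC = \{\, c_{\alpha} : \overline{U_{\alpha}} \times [-1,1] \to X \,\}_{\alpha\in\kappa}$ of local LIP bicollars whose bases cover $B$, the halves $c_{\alpha}^{+} := c_{\alpha}|_{\overline{U_{\alpha}}\times[0,1]}$ and $c_{\alpha}^{-} := c_{\alpha}|_{\overline{U_{\alpha}}\times[-1,0]}$ are LIP local collars, being restrictions of the lipeomorphism $c_{\alpha}$ onto subspaces of its image. First I would push the bicollars into $S$: applying Lemma~\ref{lemma:lipschitz:cutfunctionthatavoidsclosedset} with $A := X \setminus S$ to $c_{\alpha}^{+}$ and to the flipped half $(x,t)\mapsto c_{\alpha}(x,-t)$ yields locally Lipschitz $d^{+},d^{-}:\overline{U_{\alpha}}\to(0,1]$; with $d := \min(d^{+},d^{-})$, still locally Lipschitz, the reparametrization $c_{\alpha,\#}(x,t) := c_{\alpha}(x,t\,d(x))$ is a LIP bicollar with image in $S$, being $c_{\alpha}$ composed with the lipeomorphism $(x,t)\mapsto(x,t\,d(x))$ onto the region $\{(x,t) : |t|\le d(x)\}$. (This is the Lipschitz analogue of the ``join two cut half-collars'' step of Lemma~\ref{lemma:twosidedlocallybicollaredimplieslocallycollared}; one could instead follow that step literally, at the cost of separately checking that gluing the two LIP half-collars along the slice $\overline{U_{\alpha}}\times\{0\}$ is Lipschitz, via a triangle estimate routed through the base.) So without loss of generality every $c_{\alpha}$ maps into $S$.

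Next I would reflect, exactly as in the topological proof: connectedness of $c_{\alpha}(\{x\}\times[0,1])$ partitions $\overline{U_{\alpha}}$ into disjoint relatively open sets $E^{+},E^{-}$ according to which of $S_{0}^{+},S_{0}^{-}$ the upward ray enters, and I replace $c_{\alpha}$ by $c_{\alpha,\Uparrow}$, equal to $c_{\alpha}$ over $E^{+}\times[-1,1]$ and to $c_{\alpha}\circ(\mathrm{id}\times(-\mathrm{id}))$ over $E^{-}\times[-1,1]$. Because $E^{+}$ and $E^{-}$ are open and disjoint, only one branch is active near any point, and the image splits into the disjoint open pieces $c_{\alpha}(E^{\pm}\times[-1,1])$ on which $c_{\alpha,\Uparrow}^{-1}$ is $c_{\alpha}^{-1}$ or $(\mathrm{id}\times(-\mathrm{id}))\circ c_{\alpha}^{-1}$; hence $c_{\alpha,\Uparrow}$ is again a LIP local bicollar, and after this reduction $c_{\alpha}(\overline{U_{\alpha}}\times[0,1])\subseteq S_{0}^{+}$ and $c_{\alpha}(\overline{U_{\alpha}}\times[-1,0])\subseteq S_{0}^{-}$. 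Finally $c_{\alpha}^{+}$ is then a LIP embedding (a restriction of one) onto an open subset of $S_{0}^{+}$ — openness of the image being exactly what Lemma~\ref{lemma:twosidedlocallybicollaredimplieslocallycollared} already supplies — fixing $\overline{U_{\alpha}}$ pointwise and meeting $B$ only in $\overline{U_{\alpha}}$ since $c_{\alpha}^{-1}(B)=\overline{U_{\alpha}}\times\{0\}$; so $\{c_{\alpha}^{+}\}_{\alpha\in\kappa}$ witnesses that $B$ is locally LIP collared in $S_{0}^{+}$, and symmetrically in $S_{0}^{-}$, which is the claim.

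I do not expect a serious obstacle. The one point that calls for attention is keeping the pastings on \emph{open} disjoint sets, where local Lipschitzness of the inverse survives automatically; if instead one transcribes the topological gluing of the two cut half-collars along the closed slice $\overline{U_{\alpha}}\times\{0\}$, one must supply the extra (elementary but not automatic) fact that a map Lipschitz on each of two closed half-collars sharing a common boundary slice is Lipschitz on their union, which one gets from the inequality $|a_{1}-c_{1}|+|c_{2}-a_{2}|\le|a_{1}-a_{2}|+|c_{1}-c_{2}|$ (for $a_{1}\le c_{1}$, $c_{2}\le a_{2}$) applied to the interval coordinates.
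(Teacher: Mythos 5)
Your proposal is correct and matches the paper's proof, which is literally the one-line instruction to repeat the argument of Lemma~\ref{lemma:twosidedlocallybicollaredimplieslocallycollared} with Lemma~\ref{lemma:lipschitz:cutfunctionthatavoidsclosedset} substituted for Corollary~\ref{corollary:restrictedcollarthatavoidsclosedset}. Your elaboration of why each step (restriction, reflection on disjoint open pieces, reparametrization by a locally Lipschitz cut function) preserves the LIP property, and your observation that cutting with a single $d=\min(d^{+},d^{-})$ sidesteps the closed-slice gluing issue, are exactly the checks the paper leaves implicit.
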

\begin{proof} 
    This is the same proof as for Lemma~\ref{lemma:twosidedlocallybicollaredimplieslocallycollared},
    except that we invoke Lemma~\ref{lemma:lipschitz:cutfunctionthatavoidsclosedset} instead of Corollary~\ref{corollary:restrictedcollarthatavoidsclosedset}. 
\end{proof}

We proceed with an as-for-now partial result, which discusses the existence of LIP bicollars and an explicit bound for the global Lipschitz constant of the embedding. Analyzing when the collar is not only Lipschitz but even bi-Lipschitz will be accomplished below, and we even provide different means of the completing the estimates.

\begin{theorem}\label{theorem:lipschitz:bicollar}
    Let $B$ be a closed subset of a metric space $X$.
    Let 
    \begin{align*}
        \calC = \left( c_{\alpha} : \overline{U_{\alpha}} \times [-1,1] \to X \right)_{\alpha \in \kappa} 
    \end{align*}
    be a collection of LIP local bicollars of $B$ in $X$
    such that the family $\calU = (U_{\alpha})_{\alpha \in \kappa}$ covers $B$.

    \begin{enumerate}[1., wide=10pt, itemindent=\parindent, leftmargin=0pt, topsep=0pt, itemsep=0pt]
    \item 
    If $c : B \times [-1,1] \to X$ is a bicollar of $B$ in $X$ 
    whose restrictions to $B \times [-1,0]$ and $B \times [0,1]$ are LIP embeddings, 
    then $c$ is a LIP embedding.
    \item 
    If the restrictions of $c$ to $B \times [-1,0]$ and $B \times [0,1]$ are Lipschitz, then 
    \begin{align*}
        \Lip(c, B \times [-1,1] )
        \leq 
        \max\left(
            \Lip(c, B \times [-1,0] )
            ,
            \Lip(c, B \times [ 0,1] )
        \right)
        .
    \end{align*}    
    \end{enumerate}
\end{theorem}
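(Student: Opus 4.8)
Both parts rest on one device, which I will call \emph{splitting along the zero section}. Since $B\times[-1,1]$ carries the product metric $\dist\bigl((x,s),(y,t)\bigr)=\dist(x,y)+|s-t|$, one has for $s\le 0\le t$ the identity $\dist\bigl((x,s),(y,t)\bigr)=|s|+\dist(x,y)+t$, so that the intermediate point $(x,0)$ (or $(y,0)$) lies on the common boundary $B\times\{0\}$ of the two half-cylinders and is ``aligned'' with this sum. Granting this, the second part is immediate: writing $L^{+}=\Lip(c,B\times[0,1])$ and $L^{-}=\Lip(c,B\times[-1,0])$, if $s,t$ share a sign the estimate follows from one of $L^{+},L^{-}$; if $s\le 0\le t$, using $c(x,0)=x$ and that $\{(x,s),(x,0)\}\subseteq B\times[-1,0]$ and $\{(x,0),(y,t)\}\subseteq B\times[0,1]$, one gets $\dist\bigl(c(x,s),c(y,t)\bigr)\le\dist\bigl(c(x,s),c(x,0)\bigr)+\dist\bigl(c(x,0),c(y,t)\bigr)\le L^{-}|s|+L^{+}(\dist(x,y)+t)\le\max(L^{+},L^{-})\,\dist\bigl((x,s),(y,t)\bigr)$, which is the claimed bound.

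For the first part I must show that $c$ and $c^{-1}$ are both locally Lipschitz. That $c$ is locally Lipschitz is the localised version of the argument above: at a point $(x,t)$ with $t\neq 0$ this is immediate from the hypothesis on the two half-cylinders, and at a point $(x,0)$ one picks a relatively open $V\ni x$ in $B$ and $\epsilon>0$ such that $c$ is Lipschitz on $V\times[0,\epsilon)$ and on $V\times(-\epsilon,0]$ (possible because the restrictions of $c$ to the half-cylinders are LIP embeddings, hence locally Lipschitz) and runs the splitting argument on $V\times(-\epsilon,\epsilon)$. The substantial point, and the place I expect the main obstacle, is showing that $c^{-1}$ is locally Lipschitz, i.e.\ that $c$ is locally co-Lipschitz: the image $W:=c(B\times[-1,1])\subseteq X$ does \emph{not} carry a product metric, so the splitting device cannot be applied to $W$ itself — two points of $W$ lying on opposite sides of $B$ need not have a point of $B$ metrically ``between'' them. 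Away from $B$ there is no difficulty, since $c^{-1}$ restricted to the open set $c(B\times(0,1])$ equals $\bigl(c|_{B\times[0,1]}\bigr)^{-1}$, which is locally Lipschitz, and symmetrically on $c(B\times[-1,0))$.

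At a point $x\in B$ I would transport the problem back onto a genuine product using the collar cover: choose a local LIP bicollar $c_{\alpha}:\overline{U_{\alpha}}\times[-1,1]\to X$ from $\calC$ with $x\in U_{\alpha}$, and set $\psi:=c_{\alpha}^{-1}\circ c$, a homeomorphism between neighbourhoods of $(x,0)$ inside $B\times[-1,1]$ and $\overline{U_{\alpha}}\times[-1,1]\subseteq B\times[-1,1]$ that fixes $B\times\{0\}$ pointwise. Then $c=c_{\alpha}\circ\psi$ near $(x,0)$; since $c_{\alpha}$ is a LIP embedding it is bi-Lipschitz on a neighbourhood of $(x,0)$, so it suffices to prove that $\psi^{-1}=c^{-1}\circ c_{\alpha}$ is Lipschitz near $(x,0)$ — and here the zero-section split \emph{does} apply, because $\psi^{-1}$ maps between subsets of the product space $B\times[-1,1]$. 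Concretely, for $(y_{1},u_{1})$ with $u_{1}>0$ and $(y_{2},u_{2})$ with $u_{2}<0$ I route through $(y_{1},0)$: the first leg compares $\psi^{-1}(y_{1},u_{1})$ with $\psi^{-1}(y_{1},0)=(y_{1},0)$, and since $c_{\alpha}(\{y_{1}\}\times(0,\epsilon_{\alpha}))$ lies on one of the two sides of $B$, one of $\bigl(c|_{B\times[0,1]}\bigr)^{-1}$, $\bigl(c|_{B\times[-1,0]}\bigr)^{-1}$ (both locally Lipschitz near $x$) together with the local Lipschitz bound for $c_{\alpha}$ bounds it by $O(u_{1})$; the second leg $\dist\bigl((y_{1},0),\psi^{-1}(y_{2},u_{2})\bigr)$ is treated the same way, using that $y_{1}\in B$ lies on both sides, and is bounded by $O(\dist(y_{1},y_{2})+|u_{2}|)$; adding and invoking $u_{1}+|u_{2}|+\dist(y_{1},y_{2})=\dist\bigl((y_{1},u_{1}),(y_{2},u_{2})\bigr)$ finishes the estimate. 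A convenient feature is that no coherent choice of ``side'' for $c$ relative to $c_{\alpha}$ is required, because every comparison made is between points sharing a coordinate or involving a point of $B$. Assembling the pieces, $c$ is locally bi-Lipschitz at every point, hence a LIP embedding. The one delicate bookkeeping step, which I would treat carefully, is the simultaneous choice of all the neighbourhoods (of $(x,0)$ and of $x$) on which the various local Lipschitz bounds for $c|_{B\times[0,1]}$, $c|_{B\times[-1,0]}$, $c_{\alpha}$ and $c_{\alpha}^{-1}$ are valid, so that all the points produced by the routing indeed lie in the sets where those bounds hold.
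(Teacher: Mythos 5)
Your proposal is correct and follows essentially the same route as the paper: part 2 and the Lipschitz direction of part 1 use the identical splitting of the product metric across the zero section, and the inverse-Lipschitz estimate at points of $B$ uses the same key idea of pulling the problem back through a local bicollar $c_{\alpha}$ so that the additive product metric becomes available again. The only difference is packaging — you conjugate to $\psi = c_{\alpha}^{-1}\circ c$ and estimate $\psi^{-1}$, whereas the paper chains the distance inequalities directly in $X$ through the intermediate base points $x',y'$ — and your version has the small advantage of not needing the sides of $c$ and $c_{\alpha}$ to be coherently matched.
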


\begin{proof}
    In what follows, we use 
    \begin{gather*}
        c^{+} : B \times [0,1]  \to X, \quad (x,t) \mapsto c(x,t),
        \\
        c^{-} : B \times [-1,0] \to X, \quad (x,t) \mapsto c(x,t).
    \end{gather*}
    We show the result in several steps. 
    \begin{enumerate}[1., wide=10pt, itemindent=\parindent, leftmargin=0pt, topsep=0pt, itemsep=0pt]
        \item 
        By construction, $c$ is a LIP embedding over $B \times ([-1,0)\cup(0,1])$.
Let $z \in B$.
There exist relatively open neighborhoods $U^{+}, U^{-} \subseteq B$ of $z$ and $t^{+} \in (0,1]$, $t^{-} \in [-1,0)$ such that 
        $c^{+}$ is bi-Lipschitz over $U^{+} \times [0,t^{+}]$
        and 
        $c^{-}$ is bi-Lipschitz over $U^{-} \times [t^{-},0]$.
        
        Explicitly, 
        if $s,t \in [0,t^{+}]$, then 
        \begin{align*}
            \dist( (x,s), (y,t) ) 
            &\leq 
            \LipInv(c^{+},U^{+} \times [0,t^{+}]) 
            \dist( c(x,s), c(y,t) ) 
            ,
            \\
            \dist( c(x,s), c(y,t) ) 
            &\leq 
            \Lip(c^{+},U^{+} \times [0,t^{+}])
            \dist( (x,s), (y,t) ) 
            ,
        \end{align*}
        and if $s,t \in [t^{-},0]$, then 
        \begin{align*}
            \dist( (x,s), (y,t) ) 
            &
            \leq 
            \LipInv(c^{-},U^{-} \times [t^{-},0]) 
            \dist( c(x,s), c(y,t) ) 
            ,
            \\
            \dist( c(x,s), c(y,t) ) 
            &
            \leq 
            \Lip(c^{-},U^{-} \times [t^{-},0])
            \dist( (x,s), (y,t) ) 
            .
        \end{align*}
        If $s < 0 < t$, then 
        \begin{align*}
            \dist( c(x,s), c(y,t) )
            &
            \leq 
            \dist( c(x,s), x )
            +
            \dist( x, y )
            +
            \dist( y, c(y,t) )
            \\&
            \leq 
            \max(
                \Lip(c^{+},U^{+} \times [0,t^{+}])
                ,
                \Lip(c^{-},U^{-} \times [t^{-},0])
            )
            \left( |s| + \dist( x, y ) + |t| \right)
            \\&
            \leq 
            \max(
                \Lip(c^{+},U^{+} \times [0,t^{+}])
                ,
                \Lip(c^{-},U^{-} \times [t^{-},0])
            )
            \dist( (x,s), (y,t) )
            .
        \end{align*}
        This shows that $c$ is locally Lipschitz:
        \begin{align*}
            \Lip(c, (U^{+} \cap U^{-}) \times (t^{-},t^{+}) )
            \leq 
            \max(
                \Lip(c^{+},U^{+} \times [0,t^{+}])
                ,
                \Lip(c^{-},U^{-} \times [t^{-},0])
            )
            .
        \end{align*}
        \item 
        In particular, if $c^{+}$ and $c^{-}$ are both Lipschitz, 
        then we can choose $U^{+} = U^{-} = B$ and $t^{-} = -1$, $t^{+}=1$,
        and obtain a Lipschitz estimate for $c$ itself.
        
        \item 
        We now address the converse estimate. We need to show that $c$ is a LIP embedding.
        We abbreviate \[V_{0} := (U^{+} \cap U^{-}) \times [t^{-},t^{+}].\]
        There exists $\delta > 0$ small enough such that $\Ball(z,\delta) \subseteq U_{\alpha}$ for some $\alpha \in \kappa$.
        We write 
        \begin{align*}
            U^{\star} := U^{+} \cap U^{-} \cap \Ball(z,\delta).
        \end{align*}
        Going further, 
        the fact that $c$ is continuous suffices to establish that there exists $\delta > 0$ and $\epsilon > 0$ so small that
        \begin{align*}
            E^{\star} := c( U^{\star} \times (-\epsilon,\epsilon) ) \subseteq c_\alpha( U_\alpha \times (-1,1) ).
        \end{align*}
        Now $U^{\star} \times (-\epsilon,\epsilon)$ is an open neighborhood of $(z,0)$ in $B \times [-1,1]$,
        and $E^{\star}$ is an open neighborhood of $z$ in $c_\alpha( U_\alpha \times (-1,1) )$. 
        Once more using the continuity of $c$, 
        we can choose $\delta$ and $\epsilon$ small enough to ensure that $c_{\alpha}$ is bi-Lipschitz over $c_\alpha^{-1}(E^{\star})$.

        Let $(x,s), (y,t) \in U^{\star} \times (-\epsilon,\epsilon)$ with $s < 0 < t$. 
        There exist $x', y' \in U^{\star}$ and $s' \in [-1,0]$ and $t' \in [0,1]$ such that 
        \begin{align*}
            c_\alpha(x',s') = c(x,s), 
            \quad 
            c_\alpha(y',t') = c(y,t).
        \end{align*}
        We observe 
        \begin{align*}
            &
            \dist( c_{\alpha}( x',s'), c_{\alpha}(y',t') )
            \geq 
            \LipInv( c_{\alpha}, c_{\alpha}^{-1} E^{\star} )^{-1}
            \left( \dist( x', y' ) + |s'| + |t'| \right)
            ,
        \end{align*}
        and then 
        \begin{align*}
            &\left( \dist( x', y' ) + |s'| + |t'| \right)
            \\\quad 
            &\geq 
            \Lip( c_{\alpha}, c_{\alpha}^{-1} E^{\star} )^{-1}
            \left( 
                \dist( c_{\alpha}( x',s'), x' )
                +
                \dist( x', y' ) 
                +
                \dist( y', c_{\alpha}(y',t') )
            \right)
            \\\quad 
            &\geq 
            \Lip( c_{\alpha}, c_{\alpha}^{-1} E^{\star} )^{-1}
            \left( 
                \dist( c( x,s), x' )
                +
                \dist( x', y' ) 
                +
                \dist( y', c(y,t) )
            \right).
        \end{align*}
        Next, we notice that 
        \begin{align*}
            \dist( c( x,s), x' )
            &\geq 
            \LipInv( c^{-}, U^{\star} \times [-\epsilon,0] )^{-1}
            \left( |s| + \dist(x,x') \right)
            ,
            \\
            \dist( c( y,t), y' )
            &\geq 
            \LipInv( c^{+}, U^{\star} \times [0,\epsilon] )^{-1}
            \left( |t| + \dist(y,y') \right)
            .
        \end{align*}
        The triangle inequality shows 
        \begin{align*}
            |s| + \dist(x,x') 
            +
            \dist(x',y')
            +
            |t| + \dist(y,y') 
            \geq 
            |s-t| + \dist(x,y) 
            .
        \end{align*}
        The combination with previous estimates 
        shows that $c$ is bi-Lipschitz over an open neighborhood of $(z,0)$. 
        In particular, it follows that $c$ is a LIP embedding.
    \end{enumerate}
    The proof is complete. 
\end{proof}

The preceding result is only partial, since we have not proven that the collar is bi-Lipschitz. 
One possibility to construct a bi-Lipschitz collar from two one-sided bi-Lipschitz collars 
uses a restriction of the original bicollar. 

\begin{theorem}\label{theorem:lipschitz:bicollar:complete}
    In addition to the assumptions of Theorem~\ref{theorem:lipschitz:bicollar},
    assume that $\calU$ has Lebesgue number $\delta \in (0,1]$, 
    and that the restrictions of $c$ to $B \times [-1,0]$ and $B \times [0,1]$ are bi-Lipschitz. 
    If $\epsilon \in (0,1]$ is chosen so small that it satisfies 
\begin{align*}
        \Lip(c) \max_{\alpha \in \kappa}\LipInv( c_\alpha ) \cdot \epsilon \leq \delta,
        \quad 
        4\Lip(c) \cdot \epsilon \leq \delta 
        ,
    \end{align*}
    then $c : B \times [-\epsilon,\epsilon] \to X$ is bi-Lipschitz with 
    \begin{align*}
        \LipInv( c, B \times [-\epsilon,\epsilon] )
        \leq 
        2 \left( 1 + 2 \epsilon \delta^{-1} \right)
        \max\left( 
            \LipInv( c^{-} )
            ,
            \LipInv( c^{+} )
        \right)
        \max_{\alpha \in \kappa} \left( \biLip( c_{\alpha} )^{-1} \right)
        .
    \end{align*} 
\end{theorem}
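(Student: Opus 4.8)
The forward (upper) Lipschitz estimate is immediate and carries no interesting constant: by Theorem~\ref{theorem:lipschitz:bicollar}(2) the map $c$ is Lipschitz on all of $B \times [-1,1]$ with constant $\max(\Lip(c, B\times[-1,0]), \Lip(c, B\times[0,1]))$, so its restriction to $B \times [-\epsilon,\epsilon]$ is Lipschitz with the same constant. The substance is the lower (inverse) Lipschitz estimate. Write $c^{+} := c|_{B\times[0,1]}$ and $c^{-} := c|_{B\times[-1,0]}$, both bi-Lipschitz by hypothesis, and fix $(x,s),(y,t) \in B \times [-\epsilon,\epsilon]$; the goal is $\dist((x,s),(y,t)) \le L\,\dist(c(x,s),c(y,t))$ with $L$ the announced constant. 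By symmetry assume $s \le 0 \le t$. If $s$ and $t$ have the same sign, the pair lies inside $B \times [-\epsilon,0]$ or inside $B \times [0,\epsilon]$, where $c$ agrees with $c^{-}$ or $c^{+}$; the inequality then holds with constant $\max(\LipInv(c^{-}),\LipInv(c^{+}))$, which is dominated by $L$. So assume from now on $s < 0 < t$, and abbreviate $D := \dist(c(x,s),c(y,t))$. Since $c(x,0)=x$, $c(y,0)=y$, and $\dist(x,c(x,s)) \le \Lip(c)|s| \le \Lip(c)\epsilon$ (likewise for $y$), the triangle inequality gives $\dist(x,y) \le D + 2\Lip(c)\epsilon \le D + \delta/2$, using $4\Lip(c)\epsilon \le \delta$.

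I split according to the size of $D$. \emph{First regime: $D$ not too small} (say $D \ge \delta/4$). Then $2\Lip(c)\epsilon \le \delta/2 \le 2D$, so $\dist(x,y) \le 3D$, while $|s|,|t| \le \epsilon \le \delta/(4\Lip(c)) \le D/\Lip(c) \le D$ (here $\Lip(c) \ge 1$ because $c$ restricts to the isometry $\mathrm{id}_B$ on $B\times\{0\}$). Routing through the zero-slice via the bi-Lipschitz maps $c^{\pm}$ --- that is, $\dist((x,s),(y,0)) \le \LipInv(c^{-})\dist(c(x,s),y)$ and $\dist((y,t),(x,0)) \le \LipInv(c^{+})\dist(c(y,t),x)$, whose right-hand sides are each at most $D + \Lip(c)\epsilon$ --- and recombining, one obtains $\dist((x,s),(y,t)) \le L\,D$ with a constant carrying the factor $\max(\LipInv(c^{-}),\LipInv(c^{+}))$ and a numerical factor absorbed into the $(1+2\epsilon\delta^{-1})$-term; the hypotheses on $\epsilon$ are exactly what make the $\epsilon$-contributions subordinate to $D$.

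\emph{Second regime: $D$ small} (below the threshold above) is the crux. Then $\dist(x,y) \le D + \delta/2 < \delta$, so the Lebesgue number of $\calU$ produces a \emph{single} index $\alpha$ with $x,y \in U_\alpha$, hence a bi-Lipschitz local bicollar $c_\alpha : \overline{U_\alpha}\times[-1,1] \to X$ available near $x$ and $y$. The condition $\Lip(c)\max_\alpha\LipInv(c_\alpha)\,\epsilon \le \delta$ is precisely what forces $c(x,s)$ and $c(y,t)$ into the image of $c_\alpha$: letting $K \subseteq U_\alpha$ be the closed $\delta/2$-ball about $x$ in $B$, the embedding $c_\alpha$ maps the slab $K \times [-1,1]$ onto a set whose topological boundary stays at $X$-distance at least $\LipInv(c_\alpha)^{-1}\min(\delta/2,1) = \LipInv(c_\alpha)^{-1}\delta/2$ from $x = c_\alpha(x,0)$ (apply the bi-Lipschitz lower bound to the slab's boundary points, which are at product-distance $\ge \delta/2$ from $(x,0)$), so the ball $\Ball(x,\Lip(c)\epsilon)$ --- which contains $c(x,s)$ --- lies inside $c_\alpha(\overline{U_\alpha}\times[-1,1])$, and similarly for $c(y,t)$. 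A care point: balls in $X$ need not be connected, so this step relies on $c_\alpha$ being an embedding of the slab, not on a path-following argument. Now pull back: $\phi := c_\alpha^{-1}\circ c$ is defined on a neighbourhood $V$ of $(x,0)$ in $B \times [-\epsilon,\epsilon]$ containing both $(x,s)$ and $(y,t)$; it maps into the \emph{product} $\overline{U_\alpha}\times[-1,1]$, it restricts to the identity on $B\times\{0\}$ (since $c_\alpha^{-1}(u) = (u,0)$ for $u \in U_\alpha \subseteq B$), it carries $V\cap(B\times[0,\epsilon])$ into $\overline{U_\alpha}\times[0,1]$ and $V\cap(B\times[-\epsilon,0])$ into $\overline{U_\alpha}\times[-1,0]$, and on each half it is bi-Lipschitz with constants bounded by products of $\Lip(c^{\pm})$, $\LipInv(c^{\pm})$, $\Lip(c_\alpha)$, $\LipInv(c_\alpha)$. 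For a map into a product that fixes the middle slice and is bi-Lipschitz on the upper and lower halves separately, the elementary ``go through the middle slice'' estimate --- the device already used in the proof of Theorem~\ref{theorem:lipschitz:bicollar}, in the spirit of~\cite[Theorem~2.22]{luukkainen1977elements} --- glues the two half-estimates; composing back through $c = c_\alpha\circ\phi$ then produces a constant of the form $\max(\LipInv(c^{-}),\LipInv(c^{+}))\cdot\max_\alpha\biLip(c_\alpha)$ times a numerical factor $\le 2(1+2\epsilon\delta^{-1})$, matching the claimed bound.

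Taking the maximum over the three cases yields the asserted inverse Lipschitz bound, which together with the forward estimate shows $c|_{B\times[-\epsilon,\epsilon]}$ is bi-Lipschitz. The main obstacle is the second regime: specifically (i) the verification that $c(x,s)$ and $c(y,t)$ genuinely fall inside the image of one common local bicollar --- this is where both smallness hypotheses on $\epsilon$ are consumed, and it needs the geometric observation about images of bi-Lipschitz embeddings together with attention to the possible disconnectedness of metric balls --- and (ii) threading the explicit constants through the product-gluing so that they assemble into exactly the stated expression. The same-sign case and the ``$D$ not small'' regime, by contrast, are routine triangle-inequality bookkeeping resting on the hypotheses that relate $\epsilon$ to $\delta$.
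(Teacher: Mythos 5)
Your proposal is correct and takes essentially the same approach as the paper: dispatch the same-sign case with the one-sided bi-Lipschitz hypotheses, and split the opposite-sign case into a near regime --- where the Lebesgue number yields a common local bicollar $c_\alpha$, the hypothesis $\Lip(c)\max_{\alpha}\LipInv(c_\alpha)\,\epsilon\le\delta$ places both image points in its range, and the computation of Theorem~\ref{theorem:lipschitz:bicollar} is reused --- and a far regime handled by triangle-inequality bookkeeping from $4\Lip(c)\epsilon\le\delta$. The only cosmetic difference is that you index the near/far dichotomy by the size of $\dist(c(x,s),c(y,t))$ where the paper uses $\dist(x,y)\lessgtr\delta$; these are interchangeable here, and your justification of the containment of $c(x,s)$ in the image of $c_\alpha$ is no less detailed than the paper's own assertion of that step.
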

\begin{proof}
    We continue the discussion in the proof of Theorem~\ref{theorem:lipschitz:bicollar}.
    
    \begin{enumerate}[1., wide=10pt, itemindent=\parindent, leftmargin=0pt, topsep=0pt, itemsep=0pt]
    \item 
    We construct a bi-Lipschitz collar by restricting the Lipschitz bicollar. 
    Let $x,y \in B$ and $s,t \in [-1,1]$.
    Clearly, if $s, t \geq 0$ or $s,t \leq 0$, 
    then 
    \begin{align*}
        |s-t| + \dist(x,y) 
        \leq 
        \max\left( 
            \LipInv( c^{-} )
            ,
            \LipInv( c^{+} )
        \right) 
        \dist( c( x,s), c_{\alpha}(y,t) ) .
    \end{align*}
    It remains to consider the case $s < 0 < t$. 
    
    \item 
    We first make some geometric observations. 
    On the one hand, if $z \in U_\alpha$ such that $\Ball(z,\delta) \subseteq U_\alpha$, 
    then $c_\alpha$ maps the ball $\Ball((z,0),\delta)$ 
    onto a superset of the ball $\Ball( z, \delta / \LipInv(c_\alpha) )$.
    On the other hand, for any $\epsilon \in (0,1]$ we already know 
    \begin{gather*}
        c( B \times [-\epsilon,\epsilon] ) \subseteq \Ball(B, \Lip(c) \epsilon ).
\end{gather*}
    Hence $c( B \times [\epsilon,\epsilon] )$ is a subset of the union of the images of the local collars if
    \begin{align*}
        \Lip(c) \max_{\alpha \in \kappa}\LipInv( c_\alpha ) \cdot \epsilon \leq \delta 
        .
    \end{align*}        
    Now, if $\dist(x,y) \leq \delta$, then we are in a situation as already discussed in the proof of Theorem~\ref{theorem:lipschitz:bicollar}. 
    The computations lead to the estimate 
    \begin{align*}
        |s-t| + \dist(x,y) 
        \leq 
        \biLip( c_{\alpha} )^{-1}
        \max\left( 
            1
            ,
            \LipInv( c^{-} )
            ,
            \LipInv( c^{+} )
        \right)
        \dist( c( x,s), c_{\alpha}(y,t) )
        .
    \end{align*}
    If $\dist(x,y) \geq \delta$ and $|s|, |t| \in [0,\epsilon]$, then 
$|s-t| \leq 2 \epsilon \delta^{-1} \dist(x,y)$
and 
    \begin{align*}
        \dist(x,y)
        &
        \leq
        \dist\left( c(x,s), c(y,t) \right) + \dist\left( c(x,s), x \right) + \dist\left( c(y,t), y \right)
        \\&
        \leq
        \dist\left( c(x,s), c(y,t) \right) + \Lip(c^{-}) |s| + \Lip(c^{+}) |t|
        \\&
        \leq
        \dist\left( c(x,s), c(y,t) \right) + 2 \Lip(c) \epsilon \delta^{-1} \dist(x,y)
        .
    \end{align*}
    Consequently, if we choose $\epsilon > 0$ small enough so that $4 \Lip(c) \epsilon \leq  \delta$, then 
    \begin{align*}
\dist(x,y)
        \leq
        2
        \dist\left( c(x,s), c(y,t) \right) 
        .
    \end{align*}
    \end{enumerate}
    This leads to the desired estimate, and so $c$ is bi-Lipschitz over $B \times [-\epsilon, \epsilon]$. 
\end{proof}

The preceding discussion radically simplifies if the metric space features more structure. For example, the following result is applicable if the metric equals the induced path metric of the metric space, that is, $X$ is a so-called \emph{length space}.

\begin{theorem}\label{theorem:lipschitz:bicollar:midpoint}
    In addition to the assumptions of Theorem~\ref{theorem:lipschitz:bicollar},
    assume that there exists $\alpha \geq 1$ 
    such that for all $x,y \in B$ and $s \in [-1,0]$ and $t \in [0,1]$,
    there exists $z \in B$ satisfying 
    \begin{align*}
        \dist( c(x,s), z ) + \dist( z, c(y,t) ) \leq \alpha \dist(c(x,s),c(y,t)).
    \end{align*}
    Then $c : B \times [-1,1] \to X$ is bi-Lipschitz, and 
    \begin{align*}
        \LipInv(c) \leq \alpha \max( \LipInv(c^{-}), \LipInv(c^{+}) ) 
        .
    \end{align*}
\end{theorem}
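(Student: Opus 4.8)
The plan is to build on Theorem~\ref{theorem:lipschitz:bicollar}, reading this statement as carrying the standing hypothesis that the one-sided restrictions $c^{+} : B \times [0,1] \to X$ and $c^{-} : B \times [-1,0] \to X$ of $c$ are bi-Lipschitz embeddings (this is what makes the constants $\LipInv(c^{\pm})$ in the conclusion meaningful, and it is the natural reading of ``in addition to the assumptions of Theorem~\ref{theorem:lipschitz:bicollar}''). Under this hypothesis, Theorem~\ref{theorem:lipschitz:bicollar} already yields that $c$ is a Lipschitz embedding with $\Lip(c, B \times [-1,1]) \leq \max(\Lip(c^{-}), \Lip(c^{+}))$, so the only thing left to prove is the lower Lipschitz bound: for all $x, y \in B$ and $s, t \in [-1,1]$ one has $\dist\bigl((x,s),(y,t)\bigr) \leq \alpha \max\bigl(\LipInv(c^{-}),\LipInv(c^{+})\bigr)\,\dist\bigl(c(x,s),c(y,t)\bigr)$, where on the product we use the sum metric $\dist\bigl((x,s),(y,t)\bigr) = \dist(x,y) + |s-t|$.

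First I would dispose of the case in which $s$ and $t$ do not have strictly opposite signs, i.e., $s,t \geq 0$ or $s,t \leq 0$ (the borderline $s = 0$ or $t = 0$ is covered here, since $B \times \{0\}$ lies in both half-cylinders). Then $(x,s)$ and $(y,t)$ both lie in $B \times [0,1]$ or both in $B \times [-1,0]$, so the bi-Lipschitz estimate for $c^{+}$ or for $c^{-}$ applies verbatim, and since $\alpha \geq 1$ this already gives the claimed inequality. For the remaining case $s < 0 < t$, I would invoke the hypothesis to obtain a point $z \in B$ with $\dist(c(x,s), z) + \dist(z, c(y,t)) \leq \alpha\,\dist(c(x,s),c(y,t))$. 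Writing $z = c(z,0)$, the pairs $\{(x,s),(z,0)\} \subseteq B \times [-1,0]$ and $\{(z,0),(y,t)\} \subseteq B \times [0,1]$ fall under the previous case, so applying the bi-Lipschitz estimates of $c^{-}$ and $c^{+}$ and then the triangle inequality on $B \times [-1,1]$ gives
\[
    \dist\bigl((x,s),(y,t)\bigr) \leq \LipInv(c^{-})\dist(c(x,s),z) + \LipInv(c^{+})\dist(z,c(y,t)) \leq \alpha \max\bigl(\LipInv(c^{-}),\LipInv(c^{+})\bigr) \dist\bigl(c(x,s),c(y,t)\bigr).
\]
Together with the upper bound from Theorem~\ref{theorem:lipschitz:bicollar}, this shows that $c$ is bi-Lipschitz with $\LipInv(c) \leq \alpha \max(\LipInv(c^{-}),\LipInv(c^{+}))$.

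I do not expect a genuine obstacle: the argument is just the quantitative form of ``gluing the two one-sided collars across the seam $B \times \{0\}$'', and the midpoint-type hypothesis is precisely what lets one route a path from $c(x,s)$ to $c(y,t)$ through a point of $B$ while losing at most a factor $\alpha$. The only places needing a little care are (i) confirming that $\LipInv(c^{\pm})$ are finite — hence the need to take the one-sided restrictions bi-Lipschitz; (ii) noticing that the sub-case with $s$ or $t$ equal to $0$ is genuinely handled by the same-sign case because $B \times \{0\}$ is contained in both $B \times [0,1]$ and $B \times [-1,0]$; and (iii) using the additive product metric so that $\dist\bigl((x,s),(z,0)\bigr) = \dist(x,z) + |s|$, which keeps the two one-sided estimates cleanly separated. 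Everything else reduces to a single triangle inequality.
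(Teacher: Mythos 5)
Your proposal is correct and follows essentially the same route as the paper's proof: reduce to the mixed-sign case $s<0<t$, insert the point $z$ from the midpoint hypothesis, and combine the triangle inequality in $B\times[-1,1]$ with the one-sided estimates $\LipInv(c^{-})$ and $\LipInv(c^{+})$. The paper's version is merely terser, leaving the same-sign case implicit.
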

\begin{proof}
    Let $x,y \in B$ and $s \in [-1,0]$ and $t \in [0,1]$. Then 
\begin{align*}
        \dist(x,y) + |s-t|
        &\leq 
        \dist(x,z) + \dist(z,y) + |s| + |t|
        \\  
        &\leq 
        \LipInv(c^{-})
        \dist( c(x,s), z )
        + 
        \LipInv(c^{+})
        \dist( z, c(y,t) )
        .
    \end{align*}
    The claim is now evident. 
\end{proof}

\begin{remark}
    Given a radius $r > 0$, 
    the circle $B := \partial\calB(r,0) \subseteq \bbR^{2}$ is collared in the disk $X := \calB(r,0) \subseteq \bbR^{2}$.
    For example, a simple choice of collar is 
    \begin{align*}
        c : B \times [0,1] \to X, \quad (x,t) \mapsto \left( 1 - t/2 \right) x.
\end{align*}
    As $r$ goes to zero, $\Lip(c)$ remains uniformly bounded but $\LipInv(c) \geq \frac 1 r$.
    The reason is that the parameter range $t \in [0,1]$ is not comparable to the length scale of the collared set. 
    In this example with a compact base,
    this undesirable phenomenon is easily mitigated by rescaling $t$ to range over $[0,r]$,
    or alternatively by rescaling the base set.
    Generally, a notion of collar in metric spaces should have $t$ range over an adapted length scale. 
\end{remark}

The following example is relevant for the analysis of partial differential equations~\cite{licht2019smoothed}.
    Let $\Omega \subseteq \bbR^{n}$ be a domain.
    We call $\Omega$ a \emph{weakly Lipschitz domain} if for all $z \in \partial\Omega$
    there exist an open neighborhood $O_{z} \subseteq \bbR^{n}$ together with a bi-Lipschitz mapping 
$\phi_{z} : [-1,1]^{n} \to \overline{O_{z}}$
satisfying 
    \begin{gather*}
        \phi_{z}\left( [-1,1]^{n-1} \times [-1,0) \right) = \overline{O_{z}} \cap \Omega
        ,
        \quad 
        \phi_{z}\left( [-1,1]^{n-1} \times ( 0,1] \right) = \overline{O_{z}} \setminus \overline\Omega
        ,
        \\
        \phi_{z}\left( [-1,1]^{n-1} \times \{0\} \right)      = \overline{O_{z}} \cap \partial\Omega
        .
    \end{gather*}
    In other words, a domain is weakly Lipschitz if its boundary is a Lipschitz manifold of dimension $n-1$ that is locally flat in the Lipschitz category.
    Given $\phi_{z}$, one easily constructs a bi-Lipschitz local bicollar: we set $U_{z} = O_{z} \cap \partial\Omega$ and 
    \begin{align*}
        c_{z} : \overline{U_{z}} \times [-1,1] \to \bbR^{n}, \quad (x,t) \mapsto \phi_{z}\left( \phi_{z}^{-1}( x ) + t \vec{e}_n \right).
    \end{align*}
We see that a Lipschitz submanifold of codimension one is locally flat in the Lipschitz category if and only if it is bicollared in the Lipschitz category.

\begin{proposition}
    Let $\Omega \subseteq \bbR^{n}$ be a weakly Lipschitz domain. 
Moreover, suppose that there exists $r > 0$ and $C > 0$ such that for each $x \in \partial\Omega$
    there exists a local bicollar 
$c_{x} : U_{x} \times [-1,1] \to \bbR^{n}$
where $U_{x} = \overline{\calB(x,1)} \cap \partial\Omega$ and $1 \leq \Lip(c_{x}), \LipInv(c_{x}) \leq C$.
    Then $\partial\Omega$ has a bi-Lipschitz collar $h$ satisfying 
    \begin{align*}
        \Lip\left( h \right) 
        &
        \leq 
        C
        ( 1 + L_{\Sigma} / 2 )
        ( 1 + 2 L_{\Sigma} )
        \max\left( 1 + 2 L , 1 + \zeta^{-1} \right)^{N}
        C^{2N}
        ,
        \\ 
        \LipInv\left( h \right) 
        &\leq
        C
        ( 1 + L_{\Sigma} / 2 )
        C^{4N} 
        \max\left( 
            1 + \frac C \zeta + \frac L 2
        \right)^{N}
        \max\left( 3 + \frac  1 \zeta + 3 L \right)^{N} ,
    \end{align*}
    where 
    \[
        L \leq 2 (N'-1), 
        \quad 
        L_{\Sigma} \leq 2 N', 
        \quad
        N' \leq 5^{n}, 
        \quad 
        N \leq (8C+9)^{n}, 
        \quad 
        \zeta = 0.25 / C
        .
    \]
\end{proposition}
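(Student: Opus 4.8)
The plan is to reduce the Proposition to Theorem~\ref{theorem:lipschitz:locallyfinitecountable}, the only nontrivial work being the construction of a countable, locally finite collar cover of $\partial\Omega$ with uniformly controlled geometry and the extraction of the displayed constants from elementary packing estimates in $\bbR^{n}$. I would collar $\partial\Omega$ inside the metric space $\overline\Omega$ (symmetrically, $\bbR^{n}\setminus\Omega$ would do). For each $x\in\partial\Omega$, restrict the bicollar $c_{x}$ to $U_{x}\times[-1,0]$ and reflect $t\mapsto-t$: since the $t\le 0$ part of a bicollar of a weakly Lipschitz domain lies in $\overline{O_{x}}\cap\overline\Omega$, this yields a local collar $c_{x}^{-}\colon U_{x}\times[0,1]\to\overline\Omega$ of $\partial\Omega$ in $\overline\Omega$. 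Restricting and reflecting cannot increase Lipschitz constants, so $\Lip(c_{x}^{-})\le C$, $\LipInv(c_{x}^{-})\le C$, hence $\biLip(c_{x}^{-})\le C^{2}$; thus $\partial\Omega$ is a closed, locally LIP collared subset of $\overline\Omega$, with collar bases the relatively open sets $\calB(x,1)\cap\partial\Omega$.

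Next I would fix a maximal $\tfrac12$-separated set $P=\{p_{i}\}_{i\in\bbN}\subseteq\partial\Omega$, which is countable by separability of $\partial\Omega$. The sets $U_{i}':=\calB(p_{i},1)\cap\partial\Omega$, the bases of the collars $c_{p_{i}}^{-}$, cover $\partial\Omega$ with Lebesgue number $\tfrac12$ (if $\dist(z,p_{i})<\tfrac12$ then $\calB(z,\tfrac12)\cap\partial\Omega\subseteq U_{i}'$ by the triangle inequality) and form a locally finite family; a volume count gives that each point of $\partial\Omega$ lies in at most $N'\le 5^{n}$ of the $U_{i}'$, since for $p_{i_{1}},\dots,p_{i_{k}}\in\calB(z,1)$ the balls $\calB(p_{i_{j}},\tfrac14)$ are pairwise disjoint and contained in $\calB(z,\tfrac54)$. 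Hence $(c_{p_{i}}^{-})_{i\in\bbN}$ is a countable, locally finite collar cover with $\Lip\le C$ and $\biLip\le C^{2}$, and applying Proposition~\ref{proposition:lipschitzpartitionofunity} to $(U_{i}')_{i}$ produces a locally Lipschitz partition of unity $(\lambda_{i})_{i}$ subordinate to it whose Lipschitz constants are bounded as in the statement, $L\le 2(N'-1)$ and $L_{\Sigma}\le 2N'$, and whose supports $\supp(\lambda_{i})$ lie at distance $\ge\tfrac14$ from $\overline{U_{i}'}\setminus U_{i}'$.

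It then remains to verify the constants $\zeta$ and $N$ occurring in Theorem~\ref{theorem:lipschitz:locallyfinitecountable}(2). Since $c_{p_{i}}^{-}$ is a bi-Lipschitz embedding of the compact set $\overline{U_{i}'}\times[0,1]$, its image is closed in $\overline\Omega$ with topological boundary (relative to $\overline\Omega$) contained in $c_{p_{i}}^{-}(\overline{U_{i}'}\times\{1\})\cup c_{p_{i}}^{-}\big((\overline{U_{i}'}\setminus U_{i}')\times[0,1]\big)$; a parameter pair $(w,t)$ with $w\in\supp(\lambda_{i})$ and $t\le 0.75$ has product-metric distance $\ge\tfrac14$ from that set (using $1-t\ge\tfrac14$ and the interiority of the supports), so its image is at distance $\ge\tfrac14/\LipInv(c_{p_{i}}^{-})\ge 0.25/C$ from the complement of the collar image, and $\zeta=0.25/C$ is admissible. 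For $N$, take $S_{x}:=\calB(x,1)\cap\partial\Omega$: then $c_{p_{i}}^{-}\big((S_{x}\cap\overline{U_{i}'})\times[0,1]\big)\subseteq\calB(x,1+C)$ and $c_{p_{j}}^{-}(\overline{U_{j}'}\times[0,1])\subseteq\calB(p_{j},1+C)$, so these can meet only when $\dist(x,p_{j})<2(1+C)$, and the packing bound of the previous paragraph limits the number of such $p_{j}\in P$ to $\big((2(1+C)+\tfrac14)/\tfrac14\big)^{n}=(8C+9)^{n}=:N$.

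Finally, Theorem~\ref{theorem:lipschitz:locallyfinitecountable}(1) yields a global bi-Lipschitz collar $h$, and substituting $C_{b}=\max_{i}\biLip(c_{p_{i}}^{-})\le C^{2}$, $\max_{i}\Lip(c_{p_{i}}^{-})\le C$, and the values of $L,L_{\Sigma},N',N,\zeta$ found above into part~(2) of that theorem gives the two asserted inequalities. I expect the main obstacle to be exactly this last geometric bookkeeping: the four parameters $N',N,\zeta$ and the partition-of-unity constants must all be extracted from the single net $P$, and the two delicate points are that a collar image reaches at most $\Lip(c_{p_{i}}^{-})\le C$ into $\overline\Omega$ (this is what forces the factor $(8C+9)^{n}$ in $N$) and that the supports of the $\lambda_{i}$ must be kept quantitatively interior to the collar bases (this is what forces $\zeta=0.25/C$), so the trade-off of these against the bound $N'\le 5^{n}$ has to be arranged with care.
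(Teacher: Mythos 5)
Your proposal is correct and follows essentially the same route as the paper's own proof: a maximal $\tfrac12$-separated net gives the countable locally finite cover, the packing arguments yield $N'\leq 5^{n}$ and $N\leq(8C+9)^{n}$, Proposition~\ref{proposition:lipschitzpartitionofunity} with $\delta=\tfrac12$, $\delta_{0}=\tfrac14$ supplies $L$, $L_{\Sigma}$ and the quantitative interiority of the supports, $\zeta=0.25/C$ comes from pushing the $\tfrac14$-margin through $\LipInv(c_{x_i})\leq C$, and Theorem~\ref{theorem:lipschitz:locallyfinitecountable}(2) with $C_{b}\leq C^{2}$ delivers the displayed bounds. The only divergence is at the very end: you stop with a one-sided collar of $\partial\Omega$ in $\overline\Omega$ obtained by restricting and reflecting the bicollars (justified by the LIP version of Lemma~\ref{lemma:twosidedlocallybicollaredimplieslocallycollared}), whereas the paper additionally invokes Theorems~\ref{theorem:lipschitz:bicollar} and~\ref{theorem:lipschitz:bicollar:midpoint} to glue the two sides into a bicollar; since the stated estimates are exactly those of the one-sided Theorem~\ref{theorem:lipschitz:locallyfinitecountable}, your reading is consistent with the displayed constants, and your verifications of $\zeta$ and $N$ are in fact more explicit than the paper's.
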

\begin{proof}
    The existence of a LIP bicollar follows by Theorem~\ref{theorem:lipschitz:bicollar}.
    For the bi-Lipschitz estimates, we construct a countable cover and control the relevant constants.
    \begin{enumerate}[1., wide=10pt, itemindent=\parindent, leftmargin=0pt, topsep=0pt, itemsep=0pt]

    \item 
    We develop an auxiliary argument. 
    Henceforth, a set $S \subseteq \bbR^{n}$ is called $\tau$-separated for some $\tau > 0$ provided that $\dist(x,y) \geq \tau$ for all distinct $x,y \in S$.
    
    Whenever $\calS$ is a family of $\tau$-separated subsets of $B$, 
    then $\bigcup \calS$ is a $\tau$-separated subset of $B$ that contains every member of $\calS$.
    According to Zorn's lemma, there exists a maximal $\tau$-separated subset $S \subseteq B$. 
    
    It follows that $B \subseteq \bigcup_{x \in S} \calB(x,\tau)$, as is seen by contradiction:
    If some $z \in B$ has distance at least $\tau$ from every member of $S$, then $S$ cannot be maximal. 
    It also follows that $S$ is countable: 
    For each $x \in S$ we pick $q(x) \in \Ball(x,0.1) \cap \bbQ^{n}$, using the axiom of choice.
    Clearly, the mapping $x \mapsto q(x)$ is injective from $S$ into $\bbQ^{n}$, 
    and so $S$ must be countable.

    \item 
    So there exists countable $0.5$-separated family $S := (x_i)_{i \in \bbN}$ 
    such that $B \subseteq \bigcup_{x \in S} \calB(x,0.5)$.
    First, for each $x \in \partial\Omega$ 
    we have $\calB(x,0.5) \subseteq \calB(x_i,1)$ for some $i \in \bbN$. 
    Second, given $x \in B$, the number of $y \in S$ for which $x \in \calB(y,1)$ is bounded by 
$N' := 5^{n}$.
    By Proposition~\ref{proposition:lipschitzpartitionofunity} with $N'$ as above, $\delta \equiv 0.5$, and $\delta_{0} = \delta/2$, 
    there exists a partition of unity $(\lambda_{i})_{i\in\bbN}$
    subordinate to $(\calB(x_i,1))_{i \in \bbN}$,
    whose Lipschitz constants over their supports are bounded by $L := 2 (N'-1)$,
    and such that $\calB(z,0.25) \subseteq U_{x_i}$ for each $z \in \supp(\lambda_{i})$, $i \in \bbN$.
    Moreover, arbitrary sums of that partition of unity have Lipschitz constant bounded by $L_{\Sigma} := 2 N'$.
    Thence, the ball around $c_{x_i}( \supp(\lambda_i) \times [0,0.75] )$ of radius $\zeta := 0.25 / C$ is in the image of $c_{x_i}$. 
    
    Finally, recall that the image of $c_{x}$ is included in $\calB(x,1+C)$ whenever $x \in B$. 
    For each $x \in S$, the number of those $y \in S$ for which $\calB(x,1+C) \cap \calB(y,1+C) \neq \emptyset$ is bounded by 
$N := (2C+2.25)^{n} (0.25)^{-n} = (8C+9)^{n}$.
\end{enumerate}
    In accordance with Theorem~\ref{theorem:lipschitz:locallyfinitecountable}, Theorem~\ref{theorem:lipschitz:bicollar},
    and Theorem~\ref{theorem:lipschitz:bicollar:midpoint}, there exists a bi-Lipschitz bicollar embedding $h$ satisfying the desired bounds. 
\end{proof}

\subsection*{Acknowledgement}
This material is based upon work supported by the National Science Foundation 
under Grant No.\ DMS-1439786 while the author was in residence at the Institute for Computational and 
Experimental Research in Mathematics in Providence, RI, during the ``Advances in Computational Relativity'' program.

\end{document}